\def\bs{\boldsymbol}
\def\T{{\mbox{\rm\scriptsize T}}}
\definecolor{red}{rgb}{1,0,0}
\definecolor{green}{rgb}{0,1,0}
\definecolor{blue}{rgb}{0,0,1}
\definecolor{lxy}{RGB}{180,0,180}
\def\bs{\boldsymbol}
\newcommand{\thetheorem}{{\thesection. \arabic{theorem}}}
\newcommand{\thelemma}{{\thesection. \arabic{lemma}}}
\newcommand{\theproposition}{{\thesection. \arabic{proposition}}}
\newcommand{\thecorollary}{{\thesection. \arabic{corollary}}}
\newtheorem{theorem}{{\sc Theorem}}
\newtheorem{lemma}{{\sc Lemma}}
\begin{document}
\renewcommand{\baselinestretch}{1.2}
\markboth{\hfill{\footnotesize\rm Li Wang, Guannan Wang, Min-Jun Lai and Lei Gao}\hfill}
{\hfill {\footnotesize\rm Efficient Estimation of Partially Linear Models for Spatial Data over Complex Domains} \hfill}
\renewcommand{\thefootnote}{}
$\ $\par \fontsize{10.95}{14pt plus.8pt minus .6pt}\selectfont
\vspace{0.8pc} \centerline{\large\bf Efficient Estimation of Partially Linear Models for Spatial Data over Complex Domains}
\vspace{.4cm} \centerline{Li Wang$^{a}$, Guannan Wang$^{b}$, Min-Jun Lai$^{c}$ and Lei Gao$^{a}$
\footnote{\emph{Address for correspondence}: Li Wang, Department of Statistics and the Statistical Laboratory, Iowa State University, Ames, IA, USA. Email: lilywang@iastate.edu}} \vspace{.4cm} \centerline{\it $^{a}$Iowa State University, $^{b}$College of William \& Mary and $^{c}$University of Georgia} \vspace{.55cm}
\fontsize{9}{11.5pt plus.8pt minus .6pt}\selectfont

\begin{quotation}
\noindent \textit{Abstract:} In this paper, we study the estimation of partially linear models for spatial data distributed over complex domains. We use bivariate splines over triangulations to represent the nonparametric component on an irregular two-dimensional domain. The proposed method is formulated as a constrained minimization problem which does not require constructing finite elements or locally supported basis functions. Thus, it allows an easier implementation of piecewise polynomial representations of various degrees and various smoothness over an arbitrary triangulation. Moreover, the constrained minimization problem is converted into an unconstrained minimization via a QR decomposition of the smoothness constraints, which allows for the development of a fast and efficient penalized least squares algorithm to fit the model. The estimators of the parameters are proved to be asymptotically normal under some regularity conditions. The estimator of the bivariate function is consistent, and its rate of convergence is also established. The proposed method enables us to construct confidence intervals and permits inference for the parameters. The performance of the estimators is evaluated by two simulation examples and by a real data analysis.

\vspace{9pt} \noindent \textit{Key words and phrases:} Bivariate splines, Penalty, Semiparametric regression, Spatial data, Triangulation.
\end{quotation}

\fontsize{10.95}{14pt plus.8pt minus .6pt}\selectfont

\thispagestyle{empty}

\setcounter{chapter}{1} \label{SEC:introduction}
\setcounter{equation}{0}
\noindent \textbf{1. Introduction} \vskip 0.1in

In many geospatial studies, spatially distributed covariate information is available. For example, geographic information systems may contain measurements obtained from satellite images at some locations. These spatially explicit data can be useful in the construction and estimation of regression models, however, the domain over which variables of interest are defined is often found to be complicated, such as stream networks, islands and mountains. For example, Figure \ref{FIG:MC_vtrue} (a) and (b) show the largest estuary in New Hampshire together with the location of 97 sites where mercury in sediment concentrations was surveyed in the years 2000, 2001 and 2003; see \cite{Wang:Ranalli:07}. It is well known that many conventional smoothing tools with respect to the Euclidean distance between observations suffer from the problem of ``leakage'' across the complex domains, which refers to the poor estimation over difficult regions by the inappropriate linking of parts of the domain separated by physical barriers; see excellent discussions in \cite{Ramsay:02} and \cite{Wood:Bravington:Hedley:08}. In this paper, we propose to use bivariate splines (smooth piecewise polynomial functions over a triangulation of the domain of interest) to model spatially explicit datasets which enable us to overcome the ``leakage'' problem and provide more accurate estimation and prediction.

\begin{figure}[htbp]
	\begin{center}
		\begin{tabular}{cc}
			\includegraphics[height=7.2cm]{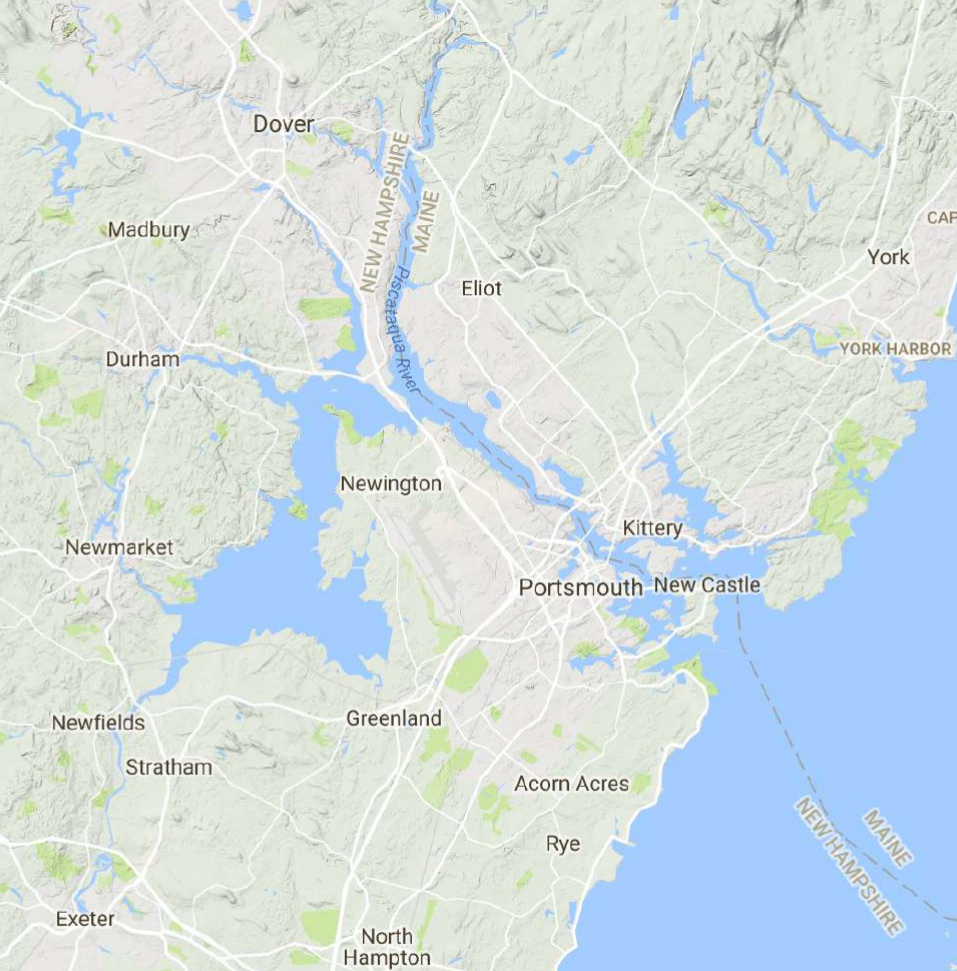} &
			\includegraphics[height=7.2cm]{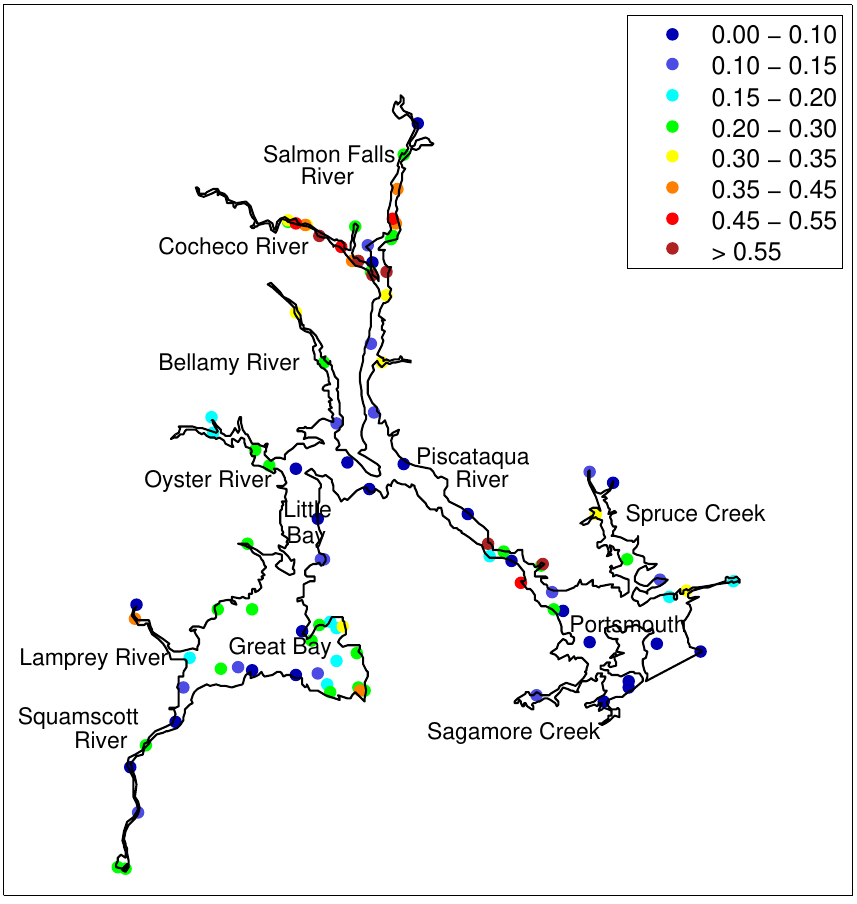}\\
			(a) & (b)
		\end{tabular}
	\end{center} \vskip -.2in
	\caption{Regional map of estuaries. Dots in (b) represent sample locations with different colors indicating different levels of mercury concentrations.}
	\label{FIG:MC_vtrue}
\end{figure}

We focus here on the partially linear model \citep{Speckman:88,He:Shi:96,Mammen:DeGeer:97,Liang:Hardle:Carroll:99,Hardle:Liang:Gao:00,Ma:Chiou:Wang:06,Liang:Li:09},
referred to as PLM, for data randomly distributed over 2-D domains. To be more specific, let $\mathbf{X}_{i}=(X_{i1},X_{i2})^{\T}$ be the location of $i$-th point, $i=1,\ldots, n$, which ranges over a bounded domain $\Omega \subseteq \mathbb{R}^2$ of arbitrary shape, for example, the domain of the estuaries in New Hampshire shown in Figure~\ref{FIG:MC_vtrue}. Let $Y_{i}$ be the response variable and
$\mathbf{Z}_{i}=(Z_{i1},\ldots,Z_{ip})^{\T}$ be the predictors at location $\mathbf{X}_{i}$. Suppose that $\left\{(\mathbf{Z}_{i},\mathbf{X}_{i},Y_{i})\right\} _{i=1}^{n}$ satisfies the following model
\begin{equation}
Y_{i}=\mathbf{Z}_{i}^{\T} \bs{\beta}+g\left(\mathbf{X}_{i}\right)+\epsilon_{i}, \quad i=1,\ldots ,n,
\label{model}
\end{equation}
where $\bs{\beta}=(\beta_{1},\ldots,\beta_{p})^{\T}$ are unknown parameters, $g(\cdot)$ is some unknown but smooth bivariate function, and $\epsilon_{i}$'s are i.i.d random noises with $E(\epsilon_{i})=0$ and $\mathrm{Var}(\epsilon _{i})=\sigma^2$. Each $\epsilon_{i}$ is independent of $\mathbf{X}_{i}$ and $\mathbf{Z}_{i}$. In many situations, our main interest is in estimating and making inference for the regression parameters $\bs{\beta}$, which provides measures of the effect of the covariate $\mathbf{Z}$ after adjusting for the location effect of $\mathbf{X}$.

If $g(\cdot)$ is a univariate function, model (\ref{model}) becomes a typical PLM. In the past three decades, flexible and parsimonious PLMs have been extensively studied and widely used in many statistical applications, from biostatistics to econometrics, from engineering to social science; see \cite{Chen:Liang:Wang:11}, \cite{Huang:Zhang:Zhou:07}, \cite{Liu:Wang:Liang:11}, \cite{Wang:Liu:Liang:Carroll:11}, \cite{Ma:Song:Wang:13}, \cite{Wang:Xue:Qu:Liang:14}, \cite{Zhang:Cheng:Liu:11} for some recent works on PLMs. When $g(\cdot)$ is a bivariate function, there are two popular estimation tools: bivariate P-splines \citep{Marx:Eilers:05} and thin plate splines \citep{Wood:03}. Later, \cite{Xiao:Li:Ruppert:13} proposed a sandwich smoother, which has a tensor product structure that simplifies an asymptotic analysis and can be computed fast. The application to spatial data analysis over complex domains, however, has been hampered due to the scarcity of bivariate smoothing tools that are not only computationally efficient but also theoretically reliable to solve the problem of ``leakage'' across the domain. Traditional smoothing methods in practical data analysis, such as kernel smoothing, wavelet-based smoothing, tensor product splines and thin plate splines, usually perform poorly for those data, since they do not take into account the shape of the domain and also smooth across concave boundary regions.

There are several challenges when going from rectangular domains to irregular domains with complex boundaries or holes. Some efforts have recently been devoted to studying the smoothing over irregular domains, and significant progress has been made. To deal with irregular domains, \cite{Eilers:06} utilized the Schwarz-Christoffel transform to convert the complex domains to regular domains, however, this transformation may lead to the artifact distortion of observation density by squeezing observations with vastly different response values together; thus it may make smoothing more difficult. \cite{Wang:Ranalli:07} proposed to replace the Euclidean distance with the geodesic distance in the low-rank thin-plate spline smoothing.  To calculate the geodesic distances, a graph is constructed where each vertex is the location of an observation and is connected only to its $k$ nearest neighbors. Floyd's algorithm is then used to find the shortest path through the graph. This algorithm has a computing complexity of $O(n^3)$ without even considering the selection of the optimal $k$, which makes the approach costly for large datasets. In addition, their method involves computing the square roots of matrices that are not guaranteed to be positive semi-definite.

\cite{Ramsay:02} suggested a penalized least squares approach with a Laplacian penalty and transformed the problem to that of solving a system of partial differential equations (PDEs). Recently, \cite{Sangalli:Ramsay:Ramsay:13} extended the method in \cite{Ramsay:02} to the PLMs, which allows for spatially distributed covariate information to be in the models. The data smoothing problem in \cite{Sangalli:Ramsay:Ramsay:13} is solved using finite element method (FEM), a method mainly developed and used to solve PDEs. Although their method is useful in many practical applications, the theoretical properties of the estimation were not investigated in their paper. In addition, our case study in Section 5 and simulation study in Appendix B reveal that the FEM is not flexible enough to well estimate the functional part of the model. \cite{Wood:Bravington:Hedley:08} also pointed out the FEM method requires a very fine triangulation in order to reach certain approximation power when the underlying function is complicated.

In this paper, we tackle the estimation problem by using the bivariate splines defined on triangulations \citep{Awanou:Lai:Wenston:05,Lai:Schumaker:07}. Our approach is an improvement of \cite{Sangalli:Ramsay:Ramsay:13} in the sense that we use spline functions of more flexible degrees and various smoothness than continuous linear finite elements so that we are able to better approximate the bivariate function $g$. Another important feature of this approach is that it does not require to construct locally supported splines or finite elements of higher degree than one.

To the best of the authors' knowledge, statistical aspects of smoothing for PLMs by using bivariate splines have not been discussed in the literature so far. This paper presents the first attempt at investigating the asymptotic properties of the PLMs for data distributed on a non-rectangular complex region. We study the asymptotic properties of the least squares estimators of $\bs{\beta}$ and $g(\cdot)$ by using bivariate splines defined on triangulations with a penalty term. We show that our estimator of $\bs{\beta}$ is root-$n$ consistent and asymptotically normal, although the convergence rate of the estimator of the nonparametric component $g(\cdot)$ is slower than root-$n$. A standard error formula for the estimated coefficients is provided and tested to be accurate enough for practical purposes. Hence, the proposed method enables us to construct confidence intervals for the regression parameters. We also obtain the convergence rate for the estimator of $g(\cdot)$.

The rest of the paper is organized as follows. In Section 2, we give a brief review of the triangulations and propose our estimation method based on penalized bivariate splines. We also discuss the details on how to choose the penalty parameters. Section 3 is devoted to the asymptotic analysis of the proposed estimators. Section 4 provides a detailed numerical study to compare several methods in two different scenarios and explores the estimation and prediction accuracy. In Section 5, we apply the proposed method to the mercury concentration study where the variables of interest are defined over the estuary in New Hampshire depicted in Figure \ref{FIG:MC_vtrue}. Some concluding remarks are given in Section 6. Technical details are provided in the appendixes.

\setcounter{chapter}{2} \renewcommand{\theproposition}{2.\arabic{proposition}}
\renewcommand{\thetable}{2.\arabic{table}} \setcounter{table}{0} 
\renewcommand{\thefigure}{2.\arabic{figure}} \setcounter{figure}{0}  \setcounter{equation}{0} \setcounter{lemma}{0} \setcounter{theorem}{0} \setcounter{proposition}{0} \setcounter{corollary}{0}
\vskip .12in \noindent \textbf{2. Triangulations and Penalized Spline Estimators} 
\label{sec:spline}

Our estimation method is based on penalized bivariate splines on triangulations. The idea is to approximate the function $g(\cdot)$ by bivariate splines that are piecewise polynomial functions over a 2-D triangulated domain which enables one to fit $g(\cdot)$ more flexibly. We use this approximation to construct least squares estimators of the linear and nonlinear components of the model with a penalization term. In the following of this section, we describe the background of triangulations, B-form bivariate splines and introduce the penalized spline estimators.

\vskip .10in \noindent \textbf{2.1. Triangulations} \vskip .10in

Triangulation is an effective strategy to handle data distribution over irregular regions with complex boundaries and/or interior holes. Recently, it has attracted substantial attention in many applied areas, such as geospatial studies, numerical solutions of PDEs, image enhancements, and computer aided geometric design. Many triangulation software packages have been developed and are available for applications.  Appendix A explains the details of how to choose a triangulation for a given dataset.

We use $\tau$ to denote a triangle which is a convex hull of three points which are not located in one line. A collection $\triangle=\{\tau_1,\ldots,\tau_{N}\}$ of $N$ triangles is called a triangulation of $\Omega=\cup_{i=1}^{N} \tau_i$ provided that if a pair of triangles in $\triangle$ intersect, then their intersection is either a common vertex or a common edge. Although any kind of polygon shapes can be used for the partition of $\Omega$,  we use triangulations because any polygonal domain of arbitrary shape can be partitioned into finitely many triangles to form a triangulation $\triangle$. Given a triangle $\tau\in \triangle$, let $|\tau|$ be its longest edge length, and denote the size of $\triangle$ by $|\triangle|=\max \{|\tau|,\tau\in\triangle\}$, i.e., the length of the longest edge of $\triangle$.

\vskip .12in \noindent \textbf{2.2. B-form bivariate splines} \vskip .10in

In this section we give a brief introduction to the bivariate splines. More in-depth description can be found in \cite{Lai:Schumaker:07}, \cite{Lai:08}, as well as \cite{Zhou:Pan:14} and the details of the implementation is provided in \cite{Awanou:Lai:Wenston:05}. Let $\tau=\langle\mathbf{v}_{1},\mathbf{v}_{2},\mathbf{v}_{3}\rangle$ be a non-degenerate (i.e. with non-zero area) triangle with vertices $\mathbf{v}_{1}$, $\mathbf{v}_{2}$, and $\mathbf{v}_{3}$. Then for any point  $\mathbf{v}\in\mathbb{R}$, there is a unique representation in the form $\mathbf{v} = b_1\mathbf{v}_1 + b_2\mathbf{v}_2 + b_3\mathbf{v}_3$, with $b_1+b_2+b_3=1$, where $b_1$, $b_2$ and $b_3$ are called the barycentric coordinates of the point $\mathbf{v}$ relative to the triangle $\tau$. The Bernstein polynomials of degree $d$ relative to triangle $\tau$ is defined as $B_{ijk}^{\tau,d}(\mathbf{v})=\frac{d}{i!j!k!}b_1^ib_2^jb_3^k$. Then for any $\tau\in\triangle$, we can write the polynomial piece of spline $s$ restricted on $\tau\in\triangle$ as $
s|_{\tau}=\sum_{i+j+k=d} \gamma_{ijk}^{\tau}B_{ijk}^{\tau,d},
$
where $\bs{\gamma}_{\tau}=\{\gamma_{ijk}^{\tau}, i+j+k=d\}$ are called B-coefficients of $s$.

For a nonnegative integer $r$, let $\mathbb{C}^r(\Omega)$ be the collection of all $r$-th continuously differentiable functions over $\Omega$. Given a triangulation $\triangle$, let $\mathbb{S}_{d}^{r}(\triangle)=\{s\in \mathbb{C}^{r}(\Omega):s|_{\tau}\in \mathbb{P}_{d}(\tau), \tau \in \triangle \}$ be a spline space of degree $d$ and smoothness $r$ over triangulation $\triangle $, where  $\mathbb{P}_{d}$ is the space of all polynomials of degree less than or equal to $d$. Let $\mathbb{S}=\mathbb{S}_{3r+2}^{r}(\triangle)$ for a fixed smoothness $r\ge1$, and we know that such a spline space has the optimal approximation order (rate of convergence) for noise-free datasets; see \cite{Lai:Schumaker:98} and \cite{Lai:Schumaker:07}.

For notation simplicity, let $\{B_{\xi}\}_{\xi \in \mathcal{K}}$ be the set of degree-$d$ bivariate Bernstein basis polynomials for $\mathbb{S}$, where $\mathcal{K}$ stands for an index set of all Bernstein basis polynomials. Then for any function $s\in \mathbb{S}$,  we can represent it by using the following basis expansion:
\begin{equation}
s(\mathbf{x})=\sum_{\xi \in \mathcal{K}} B_{\xi}(\mathbf{x})\gamma_{\xi}
=\mathbf{B}(\mathbf{x})^{\T}\bs{\gamma},
\label{EQ:basis-expansion}
\end{equation}
where $\bs{\gamma}^{\T} =(\gamma_{\xi},\xi \in \mathcal{K})$ is the spline coefficient vector. To meet the smoothness requirement of the splines, we need to impose some linear constraints on the spline coefficients $\bs{\gamma}$ in (\ref{EQ:basis-expansion}). We require that $\bs{\gamma}$ satisfies $\mathbf{H}\bs{\gamma}=0$ with $\mathbf{H}$ being the matrix for all smoothness conditions across shared edges of triangles, which depends on $r$ and the structure of the triangulation. See \cite{Zhou:Pan:14} for some examples of $\mathbf{H}$.

\vskip 0.10in \noindent \textbf{2.3. Penalized Spline Estimators} \vskip 0.10in

To define the penalized spline method, for any direction $x_{j}$, $j=1,2$, let $D_{x_{j}}^{q}f(\mathbf{x})$ denote the $q$-th order derivative in the direction $x_{j}$ at the point $\mathbf{x}=(x_1,x_2)$. Let
\begin{equation}
\mathcal{E}_{\upsilon }(f)= \sum_{\tau\in
	\triangle}\int_{\tau} \sum_{i+j=\upsilon }
\binom{\upsilon}{i}
(D_{x_{1}}^{i}D_{x_{2}}^{j}f)^{2}dx_{1}dx_{2}
\label{penalty}
\end{equation}
be the energy functional for a fixed integer $\upsilon\ge1$ \citep{Lai:08}. Although all partial derivatives up to the chosen order $\upsilon$ can be included in (\ref{penalty}), for simplicity, in the remaining part of the paper, we use $\upsilon=2$, and one can study the similar problem for general $\upsilon \geq 2$. When $\upsilon=2$,
\begin{equation}
\mathcal{E}_{2}(f)=\int_{\Omega} \left((D_{x_{1}}^{2}f)^2+2(D_{x_{1}}D_{x_{2}}f)^2 + (D_{x_{2}}^{2}f)^2\right) dx_{1}dx_{2},
\label{penalty-2}
\end{equation}
which is similar to the thin-plate spline penalty \citep{Green:Silverman:94} except the latter is integrated over the entire plane $\mathbb{R}^{2}$. \cite{Sangalli:Ramsay:Ramsay:13} used a different roughness penalty from (\ref{penalty-2}), specifically, they use the integral of the square of the Laplacian of $f$, that is, $\int_{\Omega}(D_{x_{1}}^{2}f+D_{x_{2}}^{2}f)^{2}dx_{1}dx_{2}$. Both forms of penalties are invariant with respect to Euclidean transformations of spatial co-ordinates, thus, the bivariate smoothing does not depend on the choice of the coordinate system.

Given $\lambda>0$ and given the data set $\{(\mathbf{Z}_i,\mathbf{X}_i,Y_i)\}_{i=1}^{n}$, we consider the following minimization problem:
\begin{eqnarray}
\min_{\bs{\beta}}
\min_{s\in \mathbb{S}}\sum_{i=1}^{n}\left\{Y_{i}-\mathbf{Z}_{i}^{\T} \bs{\beta} -s\left(
\mathbf{X}_{i}\right)\right\}^{2}+\lambda
\mathcal{E}_{\upsilon }(s). \label{DEF:minimization}
\end{eqnarray}
where $\mathbb{S}$ is a spline space over triangulation $\triangle$ of $\Omega$.

Let $\mathbf{Y} = (Y_1,\ldots,Y_n)^{\T}$ are the vector of $n$ observations of the response variable,  $\mathbf{X} _{n\times 2}=\{(X_{i1}, X_{i2})\}_{i=1}^{n}$ are the location design matrix, $\mathbf{Z}_{n\times p}= \{(Z_{i1}, \ldots,Z_{ip})\}_{i=1}^{n}$ the  collection of all covariates. Denote by $\mathbf{B}$ the $n\times K$ evaluation matrix of Bernstein basis polynomials whose $i$-th row is given by $\mathbf{B}_{i}^{\T}=\{B_{\xi}(\mathbf{X}_{i}), \xi\in \mathcal{K}\}$. Then according to (\ref{EQ:basis-expansion}), $\{s(\mathbf{X}_i)\}_{i=1}^{n}$ can be written by $\mathbf{B}\bs{\gamma}$.
Thus the minimization in (\ref{DEF:minimization}) can be reduced to
\begin{equation}
\min_{\bs{\beta},\bs{\gamma}} L(\bs{\beta},\bs{\gamma})\!=\!\min_{\bs{\beta},\bs{\gamma}} \left\{\|\mathbf{Y}-\mathbf{Z}\bs{\beta}-\mathbf{B}\bs{\gamma}\|^{2}\!+\!\lambda\bs{\gamma}^{\T}\mathbf{P}\bs{\gamma}\right\}\!~ \mathrm{subject~to} ~ \mathbf{H}\bs{\gamma}=\mathbf{0},
\label{EQ:minimization}
\end{equation}
where $\mathbf{P}$ is the block diagonal penalty matrix satisfying that $\bs{\gamma}^{\T}\mathbf{P}\bs{\gamma}=\mathcal{E}_{\upsilon}(\mathbf{B}\bs{\gamma})$.

To solve the constrained minimization problem (\ref{EQ:minimization}), we first remove the constraint via QR decomposition of the transpose of the constraint matrix $\mathbf{H}$. Specifically, we have
\begin{equation}
\label{QR4H}
\mathbf{H}^{\T}=\mathbf{Q}\mathbf{R} =\left(\mathbf{Q}_1 ~\mathbf{Q}_2\right)
\binom{\mathbf{R}_{1}}{\mathbf{0}}=\mathbf{Q}_1\mathbf{R}_{1},
\end{equation}
where $\mathbf{Q}$ is an orthogonal matrix, $\mathbf{R}_{1}$ is an upper triangle matrix, and the submatrix $\mathbf{Q}_1$ is the first $r$ columns of $\mathbf{Q}$, where $r$ is the rank of matrix $\mathbf{H}$. It is easy to see the following result; see its proof in Appendix D.
\begin{lemma}
	\label{LEM:1}
	Let $\mathbf{Q}_1, \mathbf{Q}_2$ be submatrices as in \eqref{QR4H}. Let $\bs{\gamma}=\mathbf{Q}_2\bs{\theta}$ for a vector $\bs{\theta}$ of appropriate size. Then $\mathbf{H}\bs{\gamma}=\mathbf{0}$. On the other hand, if $\mathbf{H}\bs{\gamma}=\mathbf{0}$, then there exists a vector $\bs{\theta}$ such that $\bs{\gamma} = \mathbf{Q}_2\bs{\theta}$.
\end{lemma}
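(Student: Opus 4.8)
The plan is to exploit the block structure of the QR factorization \eqref{QR4H} together with the orthogonality of $\mathbf{Q}$. The starting point is the observation that, since $\mathbf{R}_2 = \mathbf{0}$, the factorization collapses to $\mathbf{H}^{\T} = \mathbf{Q}_1\mathbf{R}_1$, so that transposing yields the single working identity $\mathbf{H} = \mathbf{R}_1^{\T}\mathbf{Q}_1^{\T}$. Both directions of the lemma will follow from this.

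For the first direction I would substitute $\boldsymbol{\gamma} = \mathbf{Q}_2\boldsymbol{\theta}$ directly, obtaining
\[
\mathbf{H}\boldsymbol{\gamma} = \mathbf{R}_1^{\T}\mathbf{Q}_1^{\T}\mathbf{Q}_2\boldsymbol{\theta}.
\]
Since the columns of the orthogonal matrix $\mathbf{Q} = (\mathbf{Q}_1\ \mathbf{Q}_2)$ are mutually orthonormal, the cross block $\mathbf{Q}_1^{\T}\mathbf{Q}_2$ is the zero matrix, and the conclusion $\mathbf{H}\boldsymbol{\gamma} = \mathbf{0}$ is immediate.

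For the converse I would start from $\mathbf{H}\boldsymbol{\gamma} = \mathbf{R}_1^{\T}(\mathbf{Q}_1^{\T}\boldsymbol{\gamma}) = \mathbf{0}$ and argue that $\mathbf{Q}_1^{\T}\boldsymbol{\gamma} = \mathbf{0}$. The crucial point is that $\mathbf{R}_1^{\T}$ has full column rank $r$: because $\mathbf{Q}_1^{\T}$ has orthonormal rows and hence full row rank $r$, right-multiplication by it preserves rank, so $\operatorname{rank}(\mathbf{R}_1^{\T}) = \operatorname{rank}(\mathbf{H}) = r$; as $\mathbf{R}_1^{\T}$ has exactly $r$ columns, it is injective. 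Injectivity then forces $\mathbf{Q}_1^{\T}\boldsymbol{\gamma} = \mathbf{0}$. Using $\mathbf{Q}\mathbf{Q}^{\T} = \mathbf{I}$ to resolve $\boldsymbol{\gamma}$ into its two orthogonal components,
\[
\boldsymbol{\gamma} = \mathbf{Q}_1\mathbf{Q}_1^{\T}\boldsymbol{\gamma} + \mathbf{Q}_2\mathbf{Q}_2^{\T}\boldsymbol{\gamma} = \mathbf{Q}_2\boldsymbol{\theta}, \qquad \boldsymbol{\theta} := \mathbf{Q}_2^{\T}\boldsymbol{\gamma},
\]
since the first summand vanishes, which is exactly the required representation.

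The one step that deserves attention is the rank bookkeeping in the converse: it is essential that $r$ is identified as the rank of $\mathbf{H}$, since this is precisely what guarantees $\mathbf{R}_1^{\T}$ is left-invertible and lets me pass from $\mathbf{R}_1^{\T}\mathbf{Q}_1^{\T}\boldsymbol{\gamma} = \mathbf{0}$ to $\mathbf{Q}_1^{\T}\boldsymbol{\gamma} = \mathbf{0}$. The remaining manipulations are routine algebra with orthogonal blocks, and no property of the triangulation or of the smoothness matrix beyond its rank enters the argument.
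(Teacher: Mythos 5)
Your proof is correct and follows essentially the same route as the paper: both directions rest on the identity $\mathbf{H}=\mathbf{R}_1^{\T}\mathbf{Q}_1^{\T}$ together with $\mathbf{Q}_1^{\T}\mathbf{Q}_2=\mathbf{0}$, and in the converse both arguments first deduce $\mathbf{Q}_1^{\T}\boldsymbol{\gamma}=\mathbf{0}$ and then place $\boldsymbol{\gamma}$ in the column space of $\mathbf{Q}_2$. If anything, your rank bookkeeping showing that $\mathbf{R}_1^{\T}$ is injective is slightly more careful than the paper's bare assertion that $\mathbf{R}_1$ is invertible (which is literally true only when $\mathbf{H}$ has full row rank), but this is a refinement of the same argument, not a different one.
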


The problem (\ref{EQ:minimization}), is now converted to a conventional penalized regression problem without any constraints:
\begin{equation*}
\min_{\bs{\beta},\bs{\theta}} \left\{\|\mathbf{Y}-\mathbf{Z}\bs{\beta}-\mathbf{B}\mathbf{Q}_{2}\bs{\theta}\|^{2}+\lambda(\mathbf{Q}_{2}\bs{\theta})^{\T}\mathbf{P}(\mathbf{Q}_{2}\bs{\theta})\right\}.
\label{EQ:minimization1}
\end{equation*}

For a fixed penalty parameter $\lambda$, we have
\[
\left(\!
\begin{array}{c}
\widehat{\bs{\beta}}\\
\widehat{\bs{\theta}}
\end{array}
\!\right)
=\left\{\left(\!\begin{array}{cc}
\mathbf{Z}^{\T}\mathbf{Z} &\mathbf{Z}^{\T}\mathbf{B}\mathbf{Q}_{2}\\
\mathbf{Q}_{2}^{\T}\mathbf{B}^{\T}\mathbf{Z} &\mathbf{Q}_{2}^{\T}\mathbf{B}^{\T}\mathbf{B}\mathbf{Q}_{2}
\end{array}\!\right)\!+\!
\left(\!
\begin{array}{cc}
\mathbf{0}&\\
&\lambda\mathbf{Q}_{2}^{\T}\mathbf{P}\mathbf{Q}_{2}
\end{array}
\!\right)\right\}^{-1}
\left(\!
\begin{array}{c}
\mathbf{Z}^{\T}\mathbf{Y}\\
\mathbf{Q}_{2}^{\T}\mathbf{B}^{\T}\mathbf{Y}
\end{array}
\!\right).
\]
Letting
\begin{equation}
\mathbf{V}=\left(
\begin{array}{cc}
\mathbf{V}_{11} & \mathbf{V}_{12} \\
\mathbf{V}_{21} & \mathbf{V}_{22}
\end{array}%
\right)
=\left(
\begin{array}{cc}
\mathbf{Z}^{\T}\mathbf{Z}                                  & \mathbf{Z}^{\T}\mathbf{B}\mathbf{Q}_{2}\\
\mathbf{Q}_{2}^{\T} \mathbf{B}^{\T}  \mathbf{Z} & \mathbf{Q}_{2}^{\T}(\mathbf{B}^{\T} \mathbf{B}+\lambda\mathbf{P})\mathbf{Q}_{2}
\end{array}
\right), \label{DEF:V}
\end{equation}
we have
\[
\left(
\begin{array}{c}
\widehat{\bs{\beta}}\\
\widehat{\bs{\theta}}
\end{array}
\right)
=\mathbf{V}^{-1}
\left(\begin{array}{c}
\mathbf{Z}^{\T}\mathbf{Y}\\
\mathbf{Q}_{2}^{\T}\mathbf{B}^{\T}\mathbf{Y}
\end{array}\right).
\]
Next, we write
\begin{equation}
\mathbf{V}^{-1}\equiv\mathbf{U}=\left(
\begin{array}{cc}
\mathbf{U}_{11} & \mathbf{U}_{12} \\
\mathbf{U}_{21} & \mathbf{U}_{22}
\end{array}
\right)
=\left(
\begin{array}{cc}
\mathbf{U}_{11} & -\mathbf{U}_{11} \mathbf{V}_{12} \mathbf{V}_{22}^{-1} \\
- \mathbf{U}_{22} \mathbf{V}_{21} \mathbf{V}_{11}^{-1} & \mathbf{U}_{22}
\end{array}
\right) , \label{DEF:V-inverse}
\end{equation}
where
\begin{eqnarray}
\mathbf{U}_{11}^{-1}&=&\mathbf{V}_{11}-\mathbf{V}_{12}\mathbf{V}_{22}^{-1}\mathbf{V}_{21}
=\mathbf{Z}^{\T}\left[\mathbf{I}-\mathbf{B}\mathbf{Q}_2\{\mathbf{Q}_2^{\T}(\mathbf{B}^{\T} \mathbf{B}
+\lambda\mathbf{P})\mathbf{Q}_2\}^{-1} \mathbf{Q}_2^{\T}\mathbf{B}^{\T}\right]\mathbf{Z},\nonumber \label{DEF:U11}\\
\mathbf{U}_{22}^{-1}&=&\mathbf{V}_{22}-\mathbf{V}_{21}\mathbf{V}_{11}^{-1}\mathbf{V}_{12}
=\mathbf{Q}_{2}^{\T}\left[\mathbf{B}^{\T} \left\{\mathbf{I}- \mathbf{Z} (\mathbf{Z}^{\T}\mathbf{Z})^{-1}
\mathbf{Z}^{\T}\right\}\mathbf{B}+\lambda\mathbf{P} \right]\mathbf{Q}_{2}.
\label{DEF:U22}
\end{eqnarray}
Then the minimizers of (\ref{DEF:V}) can be given precisely as follows:
\begin{align*}
\widehat{\bs{\beta}}&=
\mathbf{U}_{11}
\mathbf{Z}^{\T}\left(\mathbf{I}-\mathbf{B}\mathbf{Q}_2\mathbf{V}_{22}^{-1} \mathbf{Q}_2^{\T}\mathbf{B}^{\T}\right)\mathbf{Y}
=\mathbf{U}_{11}
\mathbf{Z}^{\T}\left\{\mathbf{I}-\mathbf{B}\mathbf{Q}_2\{\mathbf{Q}_2^{\T}(\mathbf{B}^{\T} \mathbf{B}
+\lambda\mathbf{P})\mathbf{Q}_2\}^{-1} \mathbf{Q}_2^{\T}\mathbf{B}^{\T}\right\}\mathbf{Y},\\
\widehat{\bs{\theta}}&=\mathbf{U}_{22}\mathbf{Q}_{2}^{\T}\mathbf{B}^{\T} \left(\mathbf{I}
-\mathbf{Z} \mathbf{V}_{11}^{-1}\mathbf{Z}^{\T}\right)\mathbf{Y}
=\mathbf{U}_{22}\mathbf{Q}_{2}^{\T}\mathbf{B}^{\T} \left\{\mathbf{I}
-\mathbf{Z} (\mathbf{Z}^{\T}\mathbf{Z})^{-1}\mathbf{Z}^{\T}\right\}\mathbf{Y}.
\end{align*}
Therefore, one obtains the estimators for $\bs{\gamma}$ and $g(\cdot)$, respectively:
\begin{eqnarray}
&\!\!\!\!\!\!&\widehat{\bs{\gamma}}=\mathbf{Q}_{2}\widehat{\bs{\theta}}
=\mathbf{Q}_{2}\mathbf{U}_{22}\mathbf{Q}_{2}^{\T}\mathbf{B}^{\T} \left\{\mathbf{I}
-\mathbf{Z} (\mathbf{Z}^{\T}\mathbf{Z})^{-1}\mathbf{Z}^{\T}\right\}\mathbf{Y},\notag \\
&\!\!\!\!\!\!&\widehat{g}(\mathbf{x})=\mathbf{B}(\mathbf{x})^{\T}\widehat{\bs{\gamma}}=\sum_{\xi \in \mathcal{K}} B_{\xi}(
\mathbf{x})\widehat{\gamma}_{\xi}.
\label{EQN:g_hat}
\end{eqnarray}
The fitted values at the $n$ data points are $\widehat{\mathbf{Y}}=\mathbf{Z}\widehat{\bs{\beta}}+\mathbf{B} \widehat{\bs{\gamma}}
=\mathbf{S}(\lambda)\mathbf{Y}$, where the hat matrix is
\[
\mathbf{S}(\lambda)=\mathbf{Z}\mathbf{U}_{11}
\mathbf{Z}^{\T}\left(\mathbf{I}-\mathbf{B}\mathbf{Q}_2\mathbf{V}_{22}^{-1} \mathbf{Q}_2^{\T}\mathbf{B}^{\T}
\right)+\mathbf{B}\mathbf{Q}_{2}
\mathbf{U}_{22}\mathbf{Q}_{2}^{\T}\mathbf{B}^{\T} \left(\mathbf{I}
-\mathbf{Z} \mathbf{V}_{11}^{-1}\mathbf{Z}^{\T}\right).
\]

In nonparametric regression, the trace of smoothing matrix, $\mathrm{tr}(\mathbf{S}(\lambda))$, is often called the degrees of freedom of the model fit \citep{green1993nonparametric}. It has the rough interpretation as the equivalent number of parameters and can be thought as a generalization of the definition in linear regression. Finally, we can estimate the variance of the error
term, $\sigma^{2}$ by
\begin{equation}
\label{EQ:sigma2_hat}
\hat{\sigma}^{2}=\frac{\|\mathbf{Y}-\widehat{\mathbf{Y}}\|^{2}}{n-\mathrm{tr}(\mathbf{S}(\lambda))}.
\end{equation}

\vskip 0.05in \noindent \textbf{2.4. Penalty Parameter Selection} \vskip 0.1in

Selecting a suitable value of smoothing parameter $\lambda$ is critical to good model fitting. A large value of $\lambda$ enforces a smoother fitted function with potentially larger fitting errors, while a small value yields a rougher fitted function and potentially smaller fitting errors with sufficiently many data locations. Since the in-sample fitting errors can not gauge the prediction property of the fitted function, one should target a criterion function that mimics the out-of-sample performance of the fitted model. The GCV is such a criterion and is widely used for choosing the penalty parameter. We choose the smoothing parameter $\lambda$ by minimizing the following generalized cross-validation (GCV) criterion
\[
\mathrm{GCV}(\lambda)=\frac{n\|\mathbf{Y}-\mathbf{S}(\lambda)\mathbf{Y}\|^2}{\{n-\mathrm{tr}(\mathbf{S}(\lambda))\}^2},
\]
over a grid of values of $\lambda$. We use the 10-point grid where the values of $\log_{10}(\lambda)$ are equally spaced between $-6$ and $7$ in our numerical experiments.

\setcounter{chapter}{3} \renewcommand{\thetheorem}{3.\arabic{theorem}}
\renewcommand{\thelemma}{3.\arabic{lemma}}
\renewcommand{\theproposition}{3.\arabic{proposition}}
\renewcommand{\thetable}{3.\arabic{table}} \setcounter{table}{0} 
\renewcommand{\thefigure}{3.\arabic{figure}} \setcounter{figure}{0} 
\setcounter{equation}{0} \setcounter{lemma}{0} \setcounter{theorem}{0}
\setcounter{proposition}{0}\setcounter{corollary}{0}
\vskip .12in \noindent \textbf{3. Asymptotic Results} \vskip 0.10in
\label{sec:asymptotics} 

This section studies the asymptotic properties for the proposed estimators. To discuss these properties, we first introduce some notation. For any function $f$ over the closure of domain $\Omega$, denote $\Vert f\Vert _{\infty} =\sup_{\mathbf{x}\in \Omega} |f(\mathbf{x})|$ the supremum norm of function $f$  and $|f|_{\upsilon,\infty}= \max_{i+j=\upsilon}\Vert D_{x_{1}}^{i}D_{x_{2}}^{j}f(\mathbf{x})\Vert _{\infty}$ the maximum norms of all the $\upsilon $th order derivatives of $f$ over $\Omega$. Let
\begin{equation}
W^{\ell,\infty }(\Omega)=\left\{f \mathrm{~on~} \Omega:|f|_{k,\infty}<\infty, 0\le k\le \ell \right\}
\label{DEF:Sobolev}
\end{equation}
be the standard Sobolev space. For any $j=1,\ldots, p$, let $z_{j}$ be the coordinate mapping that maps $\mathbf{z}$ to its $j$-th component so that $z_{j}(\mathbf{Z}_{i})=Z_{ij}$, and let
\begin{equation}
h_j=\mathrm{argmin}_{h\in L^{2}}\|z_{j}-h\|_{L^{2}}^{2}=\mathrm{argmin}_{h\in L^{2}}E\{(Z_{ij}-h(\mathbf{X}_{i})
)^2\} \label{EQ:h_j}
\end{equation}
be the orthogonal projection of $z_{j}$ onto $L^{2}$.

Before we state the results, we make the following assumptions:
\begin{itemize}
	\item[(A1)]  The random variables $Z_{ij}$ are bounded, uniformly in $i=1, \ldots, n$, $j =1,\ldots, p$.
	
	\item[(A2)]  The eigenvalues of $E\left\{\left.(\begin{array}{cc} 1 & \mathbf{Z}_{i}^{\T} \end{array})^{\T}(\begin{array}{cc} 1&
	\mathbf{Z}_{i}^{\T} \end{array})\right| \mathbf{X}_{i}\right\}$ are bounded away from 0.
	
	\item[(A3)]  The noise $\epsilon$ satisfies that $\lim_{\eta\rightarrow\infty}E\left[ \epsilon^{2}I(\epsilon>\eta)\right]=0$.
\end{itemize}

Assumptions (A1)--(A3) are typical in semi-parametric smoothing literature, see for instance \cite{Huang:Zhang:Zhou:07} and \cite{Wang:Liu:Liang:Carroll:11}. The purpose of Assumption (A2) is to ensure that the vector $(1,\mathbf{Z}_{i}^{\T})$ is not multicolinear.

We next introduce some assumptions on the properties of the true bivariate function in model (\ref{model}) and the data locations related to the triangulation $\triangle$.

\begin{itemize}
	\item[(C1)]  The bivariate functions $h_{j}(\cdot)$, $j=1,\ldots,p$, and the true function in model (\ref{model})  $g(\cdot) \in W^{\ell+1 ,\infty}(\Omega)$ in (\ref{DEF:Sobolev}) for an integer $\ell \ge 1$.
	
	\item[(C2)] For every $s\in \mathbb{S}$ and every $\tau \in \triangle$, there exists a positive constant $F_1$, independent of $s$ and $\tau$, such that
	\begin{eqnarray}
	F_1 \|s \|_{\infty,\tau}\le \left\{\sum_{\mathbf{X}_i\in \tau,
		\ i=1, \cdots, n} s\left(\mathbf{X}_i\right)^2\right\}^{1/2},~\mbox{ for all } \tau\in\triangle,
	\label{EQ:F1}
	\end{eqnarray}
	where $\|s\|_{\infty, \tau}$ denotes the supremum norm of $s$ on triangle $\tau$.
	
	\item[(C3)] Let $F_2$ be the largest among the numbers of observations in triangles $\tau\in \triangle$. That is, $F_2>0$ is a constant
	\begin{eqnarray}
	\left\{\sum_{\mathbf{X}_i\in  \tau, \ i=1, \cdots, n}
	s\left(\mathbf{X}_i\right)^2\right\}^{1/2}\le F_2 \|s\|_{\infty,\tau},~\mbox{ for all } \tau\in\triangle.
	\label{EQ:F2}
	\end{eqnarray}
	We further assume that  the constants $F_1$ and $F_2$ in (\ref{EQ:F1}) and (\ref{EQ:F2}) satisfy $F_2/F_1=O(1)$.
	
	\item[(C4)] The number $N$ of the triangles and the sample size $n$ satisfy that $N=Cn^{\gamma}$ for some constant $C>0$ and $1/(\ell+1)\leq \gamma \leq 1/3$.
	
	\item[(C5)] The penalized parameter $\lambda$ satisfies $\lambda=o(n^{1/2}N^{-1})$.
	
	\item[(C6)] Let $\delta_{\triangle}=\max_{\tau\in\triangle}{|\tau|}/{\rho_{\tau}}$, where $\rho_{\tau}$ is the radius of the largest circle inscribed in $\tau$. The triangulation $\triangle $ is $\delta$-quasi-uniform, that is, there exists a positive constant $\delta$ such that the triangulation $\triangle$ satisfies $\delta_{\triangle} \leq \delta$.
\end{itemize}

Condition (C1) describes the requirement for the true bivariate function as usually used in the literature of nonparametric or semiparametric estimation. Condition (C2) ensures the existence of a discrete least squares spline \citep{vonGolitschek:Schumaker:02}, i.e., an unpenalized spline with $\lambda=0$. Although one can get a decent penalized least squares spline fitting without this condition, we need (C2) to study the convergence of bivariate penalized least squares splines. Heuristically, if a triangle $\tau\in \triangle$ near the boundary of $\triangle$ does not contain enough observations, the penalized least square spline will not fit the function well over the triangle $\tau$. Condition (C3) suggests that we should not put too many observations in one triangle. Similar conditions to (C2) and (C3) are used in \cite{vonGolitschek:Schumaker:02} and \cite{Huang:03}. Condition (C4) requires that the number of triangles is above some minimum depending upon the degree of the spline, which is similar to the requirement of \cite{Li:Ruppert:08} in the univariate case. It also ensures the asymptotic equivalence of the theoretical and empirical inner products/norms defined at the beginning of Section 3. Condition (C5) is required to reduce the bias of the spline approximation through ``under smoothing'' and ``choosing smaller $\lambda$".  The study in \cite{Lai:Schumaker:07} shows that the approximation of bivariate spline space over $\triangle$ is dependent on $\delta_{\triangle}$, i.e., the larger the $\delta_{\triangle}$ is, the worse the spline approximation is. That is, the quality of spline approximation is measured by $\delta_{\triangle}$. Condition (C6) suggests the use of more uniform triangulations with a reasonably small $\delta_{\triangle}$ when constructing triangulations. By choosing a set of appropriate vertices, we are able to have a desired triangulation whose $\delta_{\triangle}$ is small enough, say $\delta_\triangle<10$.

To avoid confusion, in the following we let $\bs{\beta}_0$ and $g_{0}$ be the true parameter value and function in model (\ref {model}). The following theorem states that the rate convergence of $\widehat{\bs{\beta}}$ is root-$n$ and $\widehat{\bs{\beta}}$ is asymptotically normal.
\begin{theorem}
	\label{THM:beta-normality}
	Suppose Assumptions (A1)-(A3), (C1)-(C6) hold, then the estimator $\widehat{\bs{\beta}}$ is asymptotically normal, that is,
	$
	(n\bs{\Sigma})^{1/2}(\widehat{\bs{\beta}}-\bs{\beta}_{0}) \rightarrow N(\mathbf{0}, \mathbf{I}),
	$
	where  $\mathbf{I}$ is a $p\times p$ identity matrix,
	\begin{equation}
	\bs{\Sigma}=\sigma^{-2}E\{(\mathbf{Z}_{i}-\widetilde{\mathbf{Z}}_{i})
	(\mathbf{Z}_{i}-\widetilde{\mathbf{Z}}_{i})^{\T}\}
	\label{DEF:Sigma}
	\end{equation}
	with $\widetilde{\mathbf{Z}}_{i}=\left\{h_{1}(\mathbf{X}_{i}), \ldots, h_{p}(\mathbf{X}_{i})\right\}^{\T}$, for $h_{j}(\cdot)$ defined in (\ref{EQ:h_j}), $j=1,\ldots,p$. In addition, $\bs{\Sigma}$ can be consistently estimated by
	\begin{equation}
	\bs{\Sigma}_{n}=\frac{1}{n\widehat{\sigma}^{2}}\sum_{i=1}^{n} (\mathbf{Z}_{i}-\widehat{\mathbf{Z}}_{i})
	(\mathbf{Z}_{i}-\widehat{\mathbf{Z}}_{i})^{\T} =\frac{1}{n\widehat{\sigma}^{2}} (\mathbf{Z}-\widehat{\mathbf{Z}})^{\T}(\mathbf{Z}-\widehat{\mathbf{Z}}).
	\label{DEF:Sigma_n}
	\end{equation}
	where $\widehat{\mathbf{Z}}_{i}$ is the $i$-th column of
	$\widehat{\mathbf{Z}}^{\T}=\mathbf{Z}^{\T}\mathbf{B}\mathbf{Q}_2\mathbf{V}_{22}^{-1}
	\mathbf{Q}_2^{\T}\mathbf{B}^{\T}$ and $\widehat{\sigma}^{2}$ is given by (\ref{EQ:sigma2_hat}).
\end{theorem}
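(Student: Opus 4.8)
The plan is to recognize $\widehat{\boldsymbol\beta}$ as a profiled (partial-residual) least squares estimator and to reduce everything to the behaviour of the penalized-spline smoother acting on $\mathbf{Z}$. Write $\mathbf{A} = \mathbf{B}\mathbf{Q}_2\mathbf{V}_{22}^{-1}\mathbf{Q}_2^\T\mathbf{B}^\T$ for the symmetric smoother matrix, with $\mathbf{V}_{22}$ as in \eqref{DEF:V}; then \eqref{DEF:U11} reads $\mathbf{U}_{11}^{-1}=\mathbf{Z}^\T(\mathbf{I}-\mathbf{A})\mathbf{Z}$, so that $\widehat{\boldsymbol\beta}=\{\mathbf{Z}^\T(\mathbf{I}-\mathbf{A})\mathbf{Z}\}^{-1}\mathbf{Z}^\T(\mathbf{I}-\mathbf{A})\mathbf{Y}$. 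Substituting $\mathbf{Y}=\mathbf{Z}\boldsymbol\beta_0+\mathbf{g}_0+\boldsymbol\epsilon$, where $\mathbf{g}_0=(g_0(\mathbf{X}_1),\dots,g_0(\mathbf{X}_n))^\T$ and $\boldsymbol\epsilon=(\epsilon_1,\dots,\epsilon_n)^\T$, gives
\[
\widehat{\boldsymbol\beta}-\boldsymbol\beta_0 = \{\mathbf{Z}^\T(\mathbf{I}-\mathbf{A})\mathbf{Z}\}^{-1}\mathbf{Z}^\T(\mathbf{I}-\mathbf{A})\boldsymbol\epsilon + \{\mathbf{Z}^\T(\mathbf{I}-\mathbf{A})\mathbf{Z}\}^{-1}\mathbf{Z}^\T(\mathbf{I}-\mathbf{A})\mathbf{g}_0,
\]
a stochastic (variance) term plus a deterministic (bias) term. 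Writing $\widetilde{\mathbf{Z}}$ for the $n\times p$ matrix with rows $\widetilde{\mathbf{Z}}_i^\T$ and $\boldsymbol\Omega=E\{(\mathbf{Z}_i-\widetilde{\mathbf{Z}}_i)(\mathbf{Z}_i-\widetilde{\mathbf{Z}}_i)^\T\}$, which is positive definite by (A2), the target is $\boldsymbol\Sigma=\sigma^{-2}\boldsymbol\Omega$.

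The technical core, and the main obstacle, is to show that the penalized spline smooth of the covariates recovers the $L^2$ projection $\widetilde{\mathbf{Z}}$, in the averaged-norm sense $\tfrac1n\|\mathbf{A}\mathbf{Z}-\widetilde{\mathbf{Z}}\|^2=o_p(1)$ with a usable rate. I would prove this by the standard two-step argument. First, each column $h_j$ lies in $W^{\ell+1,\infty}(\Omega)$ by (C1), so by the approximation power of $\mathbb{S}=\mathbb{S}^r_{3r+2}(\triangle)$ (Lai and Schumaker, 2007) there is a spline $s_j\in\mathbb{S}$ with $\|h_j-s_j\|_\infty=O(|\triangle|^{\ell+1})=O(N^{-(\ell+1)/2})$. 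Second, the gap between the penalized fit and this approximant is controlled using the equivalence of the empirical norm $\tfrac1n\sum_i s(\mathbf{X}_i)^2$ and $\|s\|_{L^2}^2$ on $\mathbb{S}$: (C2) and (C3) give the triangle-wise two-sided bounds \eqref{EQ:F1}--\eqref{EQ:F2}, and (C4) makes the two inner products asymptotically equivalent, while the penalty contribution is bounded through (C5), $\lambda=o(n^{1/2}N^{-1})$. Here the lower bound $\gamma\ge 1/(\ell+1)$ in (C4) is exactly what forces $\sqrt n\,N^{-(\ell+1)/2}=O(1)$, and the upper bound $\gamma\le 1/3$ keeps $K=O(N)$ small relative to $n$ so the norm equivalence holds.

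Granting this, the variance term yields the limit law. Since $\mathbf{A}$ is symmetric, $\mathbf{Z}^\T(\mathbf{I}-\mathbf{A})\boldsymbol\epsilon=(\mathbf{Z}-\mathbf{A}\mathbf{Z})^\T\boldsymbol\epsilon$, and replacing $\mathbf{A}\mathbf{Z}$ by $\widetilde{\mathbf{Z}}$ costs $\tfrac{1}{\sqrt n}(\mathbf{A}\mathbf{Z}-\widetilde{\mathbf{Z}})^\T\boldsymbol\epsilon$, whose conditional variance is $\sigma^2\tfrac1n\|\mathbf{A}\mathbf{Z}-\widetilde{\mathbf{Z}}\|^2=o_p(1)$; hence $\tfrac{1}{\sqrt n}\mathbf{Z}^\T(\mathbf{I}-\mathbf{A})\boldsymbol\epsilon=\tfrac{1}{\sqrt n}\sum_{i=1}^n(\mathbf{Z}_i-\widetilde{\mathbf{Z}}_i)\epsilon_i+o_p(1)$. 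The leading sum is i.i.d., mean zero with covariance $\sigma^2\boldsymbol\Omega$ (using independence of $\epsilon_i$ from $(\mathbf{Z}_i,\mathbf{X}_i)$), and the Lindeberg condition is supplied by (A1) and (A3), giving convergence to $N(\mathbf{0},\sigma^2\boldsymbol\Omega)$. The same replacement plus the weak law gives $\tfrac1n\mathbf{Z}^\T(\mathbf{I}-\mathbf{A})\mathbf{Z}\to\boldsymbol\Omega$. For the bias term I would decompose $\mathbf{g}_0=\mathbf{s}_g+\mathbf{d}$ with spline approximant $\mathbf{s}_g$ and remainder $\mathbf{d}$: on the spline part, $(\mathbf{I}-\mathbf{A})\mathbf{s}_g=\lambda\,\mathbf{B}\mathbf{Q}_2\mathbf{V}_{22}^{-1}\mathbf{Q}_2^\T\mathbf{P}\mathbf{Q}_2\boldsymbol\theta_g$ is purely penalty-induced and negligible by (C5); on the remainder, by symmetry $\tfrac{1}{\sqrt n}\mathbf{Z}^\T(\mathbf{I}-\mathbf{A})\mathbf{d}=\tfrac{1}{\sqrt n}\{(\mathbf{I}-\mathbf{A})\mathbf{Z}\}^\T\mathbf{d}$, and since $(\mathbf{I}-\mathbf{A})\mathbf{Z}\approx\mathbf{Z}-\widetilde{\mathbf{Z}}$ has mean zero given $\mathbf{X}$ while $\mathbf{d}$ is a function of $\mathbf{X}$ of size $O(N^{-(\ell+1)/2})$, this is $o_p(1)$ under (C1) and (C4). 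Slutsky then gives $\sqrt n(\widehat{\boldsymbol\beta}-\boldsymbol\beta_0)\to N(\mathbf{0},\sigma^2\boldsymbol\Omega^{-1})=N(\mathbf{0},\boldsymbol\Sigma^{-1})$, equivalently $(n\boldsymbol\Sigma)^{1/2}(\widehat{\boldsymbol\beta}-\boldsymbol\beta_0)\to N(\mathbf{0},\mathbf{I})$.

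Finally, for the consistency of $\boldsymbol\Sigma_n$ in \eqref{DEF:Sigma_n}, note that the stated $\widehat{\mathbf{Z}}^\T=\mathbf{Z}^\T\mathbf{B}\mathbf{Q}_2\mathbf{V}_{22}^{-1}\mathbf{Q}_2^\T\mathbf{B}^\T$ is precisely $\widehat{\mathbf{Z}}=\mathbf{A}\mathbf{Z}$, so $\mathbf{Z}-\widehat{\mathbf{Z}}=(\mathbf{I}-\mathbf{A})\mathbf{Z}$ and $\boldsymbol\Sigma_n=\tfrac{1}{n\widehat\sigma^2}\mathbf{Z}^\T(\mathbf{I}-\mathbf{A})^2\mathbf{Z}$. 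Using $\mathbf{A}\mathbf{Z}\approx\widetilde{\mathbf{Z}}$ together with $\mathbf{A}\widetilde{\mathbf{Z}}\approx\widetilde{\mathbf{Z}}$ (the latter because $\widetilde{\mathbf{Z}}$ is itself smooth, again by (C1) and the approximation bound), one gets $\mathbf{A}(\mathbf{Z}-\widetilde{\mathbf{Z}})$ negligible in averaged norm, so $\tfrac1n\mathbf{Z}^\T(\mathbf{I}-\mathbf{A})^2\mathbf{Z}\to\boldsymbol\Omega$ by the same law-of-large-numbers step as for $\tfrac1n\mathbf{Z}^\T(\mathbf{I}-\mathbf{A})\mathbf{Z}$. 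It remains to show $\widehat\sigma^2$ converges in probability to $\sigma^2$: expanding $\mathbf{Y}-\widehat{\mathbf{Y}}=(\mathbf{I}-\mathbf{S}(\lambda))\mathbf{Y}$ through the model, the dominant contribution is $\tfrac1n\|\boldsymbol\epsilon\|^2\to\sigma^2$, while $\mathrm{tr}(\mathbf{S}(\lambda))=O(N)=o(n)$ makes $n-\mathrm{tr}(\mathbf{S}(\lambda))\sim n$ and renders the cross and approximation terms negligible. Combining the two limits gives $\boldsymbol\Sigma_n\to\sigma^{-2}\boldsymbol\Omega=\boldsymbol\Sigma$, which completes the plan.
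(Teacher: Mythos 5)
Your plan is correct in substance and, at the level of architecture, it is the paper's own proof: the same closed form $\widehat{\boldsymbol{\beta}}=\{\mathbf{Z}^{\T}(\mathbf{I}-\mathbf{A})\mathbf{Z}\}^{-1}\mathbf{Z}^{\T}(\mathbf{I}-\mathbf{A})\mathbf{Y}$ for the smoother matrix $\mathbf{A}=\mathbf{B}\mathbf{Q}_2\mathbf{V}_{22}^{-1}\mathbf{Q}_2^{\T}\mathbf{B}^{\T}$, the same bias-plus-noise split (the paper's $\widetilde{\boldsymbol{\beta}}_{\mu}-\boldsymbol{\beta}_{0}$ and $\widetilde{\boldsymbol{\beta}}_{\epsilon}$ in (\ref{betatilde_mu})--(\ref{betatilde_e})), and the same technical core, namely that the penalized spline smooth of the covariates recovers the projections $h_j$, which is exactly what the paper's Lemmas A.9 and A.10 establish. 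The genuine differences are in execution. For the normal limit you replace $\mathbf{A}\mathbf{Z}$ by $\widetilde{\mathbf{Z}}$ and invoke the i.i.d.\ multivariate CLT for $n^{-1/2}\sum_{i}(\mathbf{Z}_i-\widetilde{\mathbf{Z}}_i)\epsilon_i$; the paper instead conditions on the design, applies Lindeberg--Feller to the weighted sum $\sum_i\alpha_i\epsilon_i$ (Lemma A.8), and separately identifies the limiting conditional variance (Lemma A.10). Your route is the cleaner textbook profiling argument; the paper's conditional route yields, as a by-product, precisely the statement that legitimizes the plug-in covariance (\ref{DEF:Sigma_n}). In your favor, you also sketch proofs that $\widehat{\sigma}^{2}\rightarrow\sigma^{2}$ and that $n^{-1}\mathbf{Z}^{\T}(\mathbf{I}-\mathbf{A})^{2}\mathbf{Z}$ converges to $E\{(\mathbf{Z}_{i}-\widetilde{\mathbf{Z}}_{i})(\mathbf{Z}_{i}-\widetilde{\mathbf{Z}}_{i})^{\T}\}$, i.e.\ the consistency claim for $\boldsymbol{\Sigma}_{n}$, which the paper's appendix never actually proves.

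One step of your plan is too quick and, as justified, would fail: the penalty part of the bias. Your identity $(\mathbf{I}-\mathbf{A})\mathbf{s}_g=\lambda\mathbf{B}\mathbf{Q}_2\mathbf{V}_{22}^{-1}\mathbf{Q}_2^{\T}\mathbf{P}\mathbf{Q}_2\boldsymbol{\theta}_g$ is correct, but the quantity to control is $n^{-1/2}\mathbf{Z}^{\T}(\mathbf{I}-\mathbf{A})\mathbf{s}_g$, whose $j$-th entry equals $\lambda n^{-1/2}\langle s_{\lambda,z_j},s_g\rangle_{\mathcal{E}_{\upsilon}}$: the penalized fit of the \emph{noisy} covariate data paired with the approximant of $g_0$. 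The available crude bound, the one the paper itself uses inside Lemma A.9, is $\|s_{\lambda,z_j}\|_{\mathcal{E}_{\upsilon}}\leq\|s_{0,z_j}\|_{\mathcal{E}_{\upsilon}}=O_{P}(|\triangle|^{-3})$, which gives $\lambda n^{-1/2}|\triangle|^{-3}=o_{P}(|\triangle|^{-1})$ under (C5) — not $o_{P}(1)$, so ``negligible by (C5)'' does not follow from this bound alone. The repair is to split $z_j=h_j+(z_j-h_j)$: the least squares fit of the smooth part has bounded energy by (C1) and the argument leading to (\ref{EQ:s_0g_Ev}), while the fit of the conditionally mean-zero part has empirical norm $O_{P}\{(N/n)^{1/2}\}$, hence energy $O_{P}(|\triangle|^{-2}(N/n)^{1/2})=O_{P}(n^{-1/2}|\triangle|^{-3})$ by Lemma A.2 and Markov's inequality; then (C4) and (C5) together give $\lambda n^{-1/2}\{1+n^{-1/2}|\triangle|^{-3}\}=o_{P}(1)$. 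The paper sidesteps this issue by decomposing in the opposite order: its penalty term is $\frac{\lambda}{n}\langle s_{\lambda,g_0},\widetilde{h}_j\rangle_{\mathcal{E}_{\upsilon}}$ (Lemma A.7), pairing the fit of the smooth $g_0$ with a spline approximant of $h_j$, both of bounded energy, so (C5) alone suffices. Either adopt the paper's pairing or add the noise-splitting estimate; with that repair your plan goes through.
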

The results in Theorem \ref{THM:beta-normality} enable us to construct confidence intervals for the parameters. The next theorem provides the global convergence of the nonparametric estimator $\widehat{g}(\cdot)$.
\begin{theorem}
	\label{THM:g-convergence}
	Suppose Assumptions (A1)-(A3), (C1)-(C6) hold, then the bivariate penalized estimator $\widehat{g}(\cdot)$ in (\ref{EQN:g_hat}) is consistent with the true function $g_{0}$, and satisfies that
	\[
	\Vert \widehat{g}-g_{0}\Vert _{L^2}=O_{P}\left(
	\frac{\lambda }{n\left| \triangle \right| ^{3}}|g_{0}|_{2,\infty}
	+\left(1+\frac{\lambda }{n\left| \triangle \right| ^{5}}\right) \frac{F_{2}}{F_{1}}
	|\triangle |^{\ell +1}|g_{0}|_{\ell,\infty}+\frac{1}{\sqrt{n}|\triangle|}
	\right).
	\]
\end{theorem}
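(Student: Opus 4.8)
The plan is to reduce everything to a single algebraic decomposition in spline-coefficient space and then control each piece with a combination of bivariate-spline approximation theory and spectral bounds on the matrices appearing in the estimator. Write the true model in vector form as $\mathbf{Y}=\mathbf{Z}\boldsymbol{\beta}_{0}+\mathbf{g}_{0}+\boldsymbol{\epsilon}$, where $\mathbf{g}_{0}=(g_{0}(\mathbf{X}_{1}),\ldots,g_{0}(\mathbf{X}_{n}))^{\T}$, and let $\mathbf{M}_{Z}=\mathbf{I}-\mathbf{Z}(\mathbf{Z}^{\T}\mathbf{Z})^{-1}\mathbf{Z}^{\T}$. Since $\mathbf{M}_{Z}\mathbf{Z}=\mathbf{0}$, the explicit formula for $\widehat{\boldsymbol{\gamma}}$ collapses to $\widehat{\boldsymbol{\gamma}}=\mathbf{Q}_{2}\mathbf{U}_{22}\mathbf{Q}_{2}^{\T}\mathbf{B}^{\T}\mathbf{M}_{Z}(\mathbf{g}_{0}+\boldsymbol{\epsilon})$. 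I would introduce a spline reference $g^{*}=\mathbf{B}^{\T}\boldsymbol{\gamma}^{*}\in\mathbb{S}$, namely a quasi-interpolant of $g_{0}$ with $\boldsymbol{\gamma}^{*}=\mathbf{Q}_{2}\boldsymbol{\theta}^{*}$, and split $\mathbf{g}_{0}=\mathbf{B}\boldsymbol{\gamma}^{*}+\boldsymbol{\delta}$ with $\boldsymbol{\delta}=\mathbf{g}_{0}-\mathbf{B}\boldsymbol{\gamma}^{*}$. Using $\mathbf{U}_{22}^{-1}=\mathbf{Q}_{2}^{\T}(\mathbf{B}^{\T}\mathbf{M}_{Z}\mathbf{B}+\lambda\mathbf{P})\mathbf{Q}_{2}$, the first block telescopes, giving the clean identity
\[
\widehat{\boldsymbol{\gamma}}-\boldsymbol{\gamma}^{*}=-\lambda\mathbf{Q}_{2}\mathbf{U}_{22}\mathbf{Q}_{2}^{\T}\mathbf{P}\boldsymbol{\gamma}^{*}+\mathbf{Q}_{2}\mathbf{U}_{22}\mathbf{Q}_{2}^{\T}\mathbf{B}^{\T}\mathbf{M}_{Z}\boldsymbol{\delta}+\mathbf{Q}_{2}\mathbf{U}_{22}\mathbf{Q}_{2}^{\T}\mathbf{B}^{\T}\mathbf{M}_{Z}\boldsymbol{\epsilon}.
\]
These three summands are, in order, the penalty bias, the approximation bias, and the stochastic term, and I would combine them with $\|g^{*}-g_{0}\|_{L^{2}}$ (a pure approximation error) through the triangle inequality $\|\widehat{g}-g_{0}\|_{L^{2}}\le\|\mathbf{B}^{\T}(\widehat{\boldsymbol{\gamma}}-\boldsymbol{\gamma}^{*})\|_{L^{2}}+\|g^{*}-g_{0}\|_{L^{2}}$.

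The second step assembles the spectral toolkit that turns coefficient-space norms into the $|\triangle|$-, $n$-, $F_{1}$-, $F_{2}$-scalings in the statement. From Lai and Schumaker (2007) and Condition (C1), the quasi-interpolant obeys $\|g^{*}-g_{0}\|_{\infty}\lesssim|\triangle|^{\ell+1}|g_{0}|_{\ell,\infty}$ and $\mathcal{E}_{2}(g^{*})=\boldsymbol{\gamma}^{*\T}\mathbf{P}\boldsymbol{\gamma}^{*}\lesssim|g_{0}|_{2,\infty}^{2}$. For the Bernstein basis the mass matrix satisfies $\mathbf{Q}_{2}^{\T}\mathbf{G}\mathbf{Q}_{2}\asymp|\triangle|^{2}\mathbf{I}$ with $\mathbf{G}=\int_{\Omega}\mathbf{B}(\mathbf{x})\mathbf{B}(\mathbf{x})^{\T}d\mathbf{x}$, an inverse (Markov) inequality gives $\lambda_{\max}(\mathbf{P})\asymp|\triangle|^{-2}$, and Conditions (C2)--(C3) furnish the empirical-to-$L^{2}$ equivalence on $\mathbb{S}$, which forces $F_{1}\asymp F_{2}\asymp\sqrt{n}\,|\triangle|$ and hence eigenvalues of $\mathbf{B}^{\T}\mathbf{B}$ comparable to $F_{1}^{2}$. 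Transversality of the finite-dimensional column space of $\mathbf{Z}$ and the spline space---guaranteed by (A2)---yields $\mathbf{Q}_{2}^{\T}\mathbf{B}^{\T}\mathbf{M}_{Z}\mathbf{B}\mathbf{Q}_{2}\succeq c\,\mathbf{Q}_{2}^{\T}\mathbf{B}^{\T}\mathbf{B}\mathbf{Q}_{2}$, so that, together with (C4)--(C5) controlling the relative size of $\lambda\mathbf{P}$, the matrix $\mathbf{U}_{22}$ is well conditioned with $\lambda_{\max}(\mathbf{U}_{22})\asymp F_{1}^{-2}$.

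With these in hand the three terms fall out by routine matching. For the stochastic term I would pass to second moments: $E\|\mathbf{B}^{\T}\mathbf{Q}_{2}\mathbf{U}_{22}\mathbf{Q}_{2}^{\T}\mathbf{B}^{\T}\mathbf{M}_{Z}\boldsymbol{\epsilon}\|_{L^{2}}^{2}\asymp|\triangle|^{2}\sigma^{2}\mathrm{tr}(\mathbf{U}_{22}^{2}\mathbf{Q}_{2}^{\T}\mathbf{B}^{\T}\mathbf{M}_{Z}\mathbf{B}\mathbf{Q}_{2})\le|\triangle|^{2}\sigma^{2}\mathrm{tr}(\mathbf{U}_{22})\asymp|\triangle|^{2}\sigma^{2}N F_{1}^{-2}$, which with $N\asymp|\triangle|^{-2}$ and $F_{1}^{2}\asymp n|\triangle|^{2}$ is of order $(n|\triangle|^{2})^{-1}$; Markov's inequality then delivers the $(\sqrt{n}\,|\triangle|)^{-1}$ rate (only $\mathrm{Var}(\epsilon)=\sigma^{2}$ is needed here, not (A3)). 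For the approximation bias I would bound $\|\mathbf{B}^{\T}\mathbf{M}_{Z}\boldsymbol{\delta}\|\lesssim\sqrt{n}\,|\triangle|\,\|\boldsymbol{\delta}\|$ with $\|\boldsymbol{\delta}\|\le\sqrt{n}\,\|\boldsymbol{\delta}\|_{\infty}$, combine with $\lambda_{\max}(\mathbf{U}_{22})$ and the Gram scaling, and track the ratio $F_{2}/F_{1}$ through the norm equivalence to obtain the $(1+\lambda n^{-1}|\triangle|^{-5})\,(F_{2}/F_{1})\,|\triangle|^{\ell+1}|g_{0}|_{\ell,\infty}$ contribution, the $\lambda$-dependent factor arising from the residual interaction of $\lambda\mathbf{P}$ with $\boldsymbol{\delta}$ inside $\mathbf{U}_{22}$. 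For the penalty bias I would use $\|\mathbf{Q}_{2}^{\T}\mathbf{P}\boldsymbol{\gamma}^{*}\|\le\lambda_{\max}(\mathbf{P})^{1/2}(\boldsymbol{\gamma}^{*\T}\mathbf{P}\boldsymbol{\gamma}^{*})^{1/2}\lesssim|\triangle|^{-1}|g_{0}|_{2,\infty}$ together with $\lambda_{\max}(\mathbf{U}_{22})$ and the Gram scaling to reach the $\lambda(n|\triangle|^{3})^{-1}|g_{0}|_{2,\infty}$ term.

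I expect the main obstacle to be the second step rather than the final matching: establishing, uniformly over $\mathbb{S}$, the equivalence between the empirical norm $\sum_{i}s(\mathbf{X}_{i})^{2}$ and $\|s\|_{L^{2}}^{2}$ with the correct $F_{1},F_{2}$ dependence, and then leveraging (A2) to show that projecting out the $p$ columns of $\mathbf{Z}$ degrades the lower eigenvalue of $\mathbf{Q}_{2}^{\T}\mathbf{B}^{\T}\mathbf{M}_{Z}\mathbf{B}\mathbf{Q}_{2}$ only by a constant factor. Controlling this lower eigenvalue is what makes $\mathbf{U}_{22}$ invertible with the stated conditioning, and it is precisely here that Conditions (C4)--(C5) enter to guarantee that the penalty block $\lambda\mathbf{P}$ stays asymptotically negligible relative to $\mathbf{B}^{\T}\mathbf{M}_{Z}\mathbf{B}$, so that the three scalings separate cleanly rather than contaminating one another.
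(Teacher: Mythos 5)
Your proposal is correct and takes essentially the same route as the paper's own proof: the identical exact decomposition of the estimator (obtained by telescoping through $\mathbf{U}_{22}^{-1}=\mathbf{Q}_{2}^{\T}(\mathbf{B}^{\T}\mathbf{M}_{Z}\mathbf{B}+\lambda\mathbf{P})\mathbf{Q}_{2}$) into penalty bias, approximation bias, and stochastic error, each controlled by the same tools the paper uses (the Lai--Schumaker approximation bound, the stability relation $\Vert\mathbf{B}(\cdot)^{\T}\boldsymbol{\gamma}\Vert_{L^2}^{2}\asymp|\triangle|^{2}\Vert\boldsymbol{\gamma}\Vert^{2}$, Markov's inequality for $\mathbf{P}$, and the spectral bound $n\mathbf{U}_{22}=O_{P}(|\triangle|^{-2})$), followed by the same conversion from coefficient norm to $L^{2}$ norm. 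The only departures are cosmetic --- you anchor at a quasi-interpolant rather than the unpenalized least-squares spline $s_{0,g_{0}}$, and you bound the noise term by a trace computation instead of coordinate-wise variances --- and you correctly single out as the main obstacle the lower eigenvalue bound on $\mathbf{Q}_{2}^{\T}\mathbf{B}^{\T}\mathbf{M}_{Z}\mathbf{B}\mathbf{Q}_{2}$ under (A2), which is precisely the step the paper's proof passes over most tersely.
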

The proofs of the above two theorems are given in Appendix. We notice that the rate of convergence given in Theorem \ref{THM:g-convergence} is the same as those for nonparametric spline regression without including the covariate information obtained in \cite{Lai:Wang:13}.

\setcounter{chapter}{4}\setcounter{equation}{0} 
\renewcommand{\thetable}{4.\arabic{table}} \setcounter{table}{0} 
\renewcommand{\thefigure}{4.\arabic{figure}} \setcounter{figure}{0} 
\vskip .12in \noindent \textbf{4. Simulation} \label{sec:simulation} \vskip 0.10in

In this section, we carry out a numerical study to assess the performance of the proposed estimators using the bivariate penalized splines over triangulations (BPST) over a horseshoe domain. We compare the BPST with filtered kriging (KRIG), thin plate splines (TPS), linear finite elements method (FEM) in \cite{Sangalli:Ramsay:Ramsay:13} and the geodesic low rank thin plate splines (GLTPS) in \cite{Wang:Ranalli:07}.  More simulation studies can be found in Appendix B.

For $50\times 20$ grid points on the domain, we simulate data as follows, the response variable $Y$ is generated from the following PLM:
\begin{equation*}
Y=\beta_{1}Z_{1}+\beta_{2}Z_{2}+g(X_1,X_2)+\epsilon.
\end{equation*}
Figure \ref{FIG:eg1_true} (a) shows the surface of the true function $g(\cdot)$, which was used by \cite{Wood:Bravington:Hedley:08} and \cite{Sangalli:Ramsay:Ramsay:13}. The random error, $\epsilon$, is generated from an $N(0,\sigma_{\epsilon}^{2})$ distribution with $\sigma_{\epsilon}=0.5$. In addition, we set the parameters as $\beta_{1}=-1$, $\beta_{2}=1$. For the design of the explanatory variables, $Z_{1}$ and $Z_{2}$, two scenarios are considered based on the relationship between the location variables $(X_1,X_2)$ and covariates $(Z_1, Z_2)$. Under both scenarios, $Z_{1}\sim uniform[-1,1]$. On the other hand, the variable $Z_{2}=\cos[4\pi (\rho(X_{1}^{2}+X_{2}^{2})+(1-\rho)U)]$ where $U\sim uniform[-1,1]$ and is independent from $(X_1,X_2)$ as well as $Z_{1}$. We consider both independent design: $\rho=0.0$ and dependent design: $\rho=0.7$ in this example. Under both scenarios, 100 Monte Carlo replicates are generated. Figure \ref{FIG:eg1_true} (b) demonstrates the sampled location points of replicate 1. For each replication, we randomly sample $n=200$ locations uniformly from the grid points inside the horseshoes domain.

\begin{figure}[htbp]
	\begin{center}
		\begin{tabular}{ccc}
			\includegraphics[width=5.75cm, height=4.5cm]{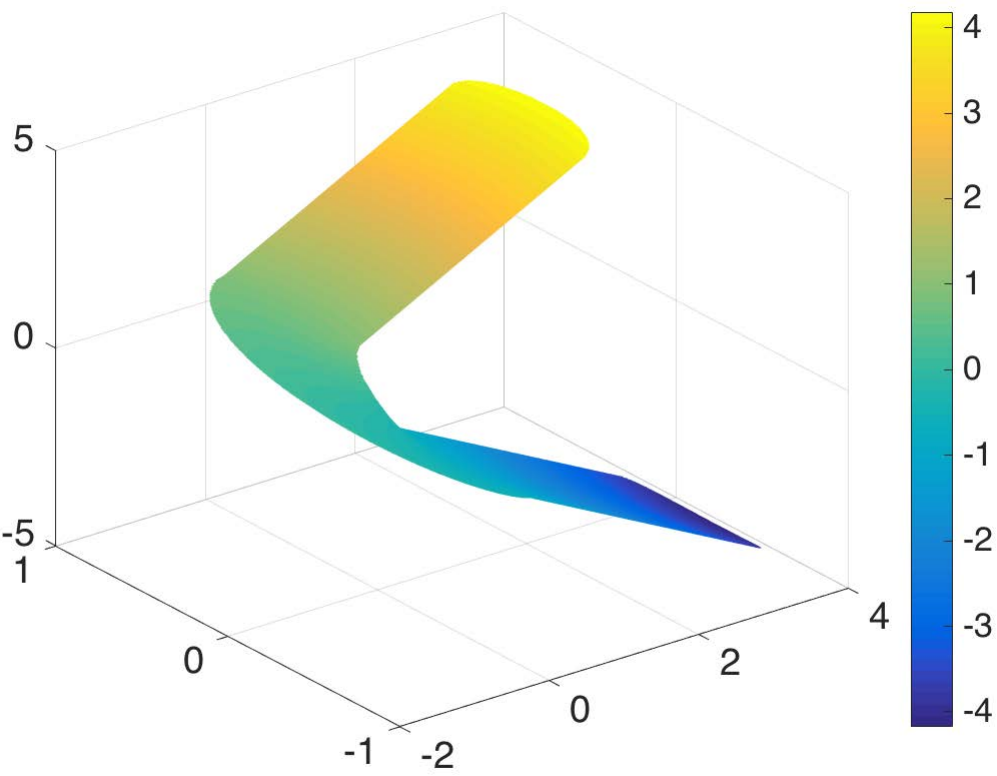} & &
			\includegraphics[height=4cm]{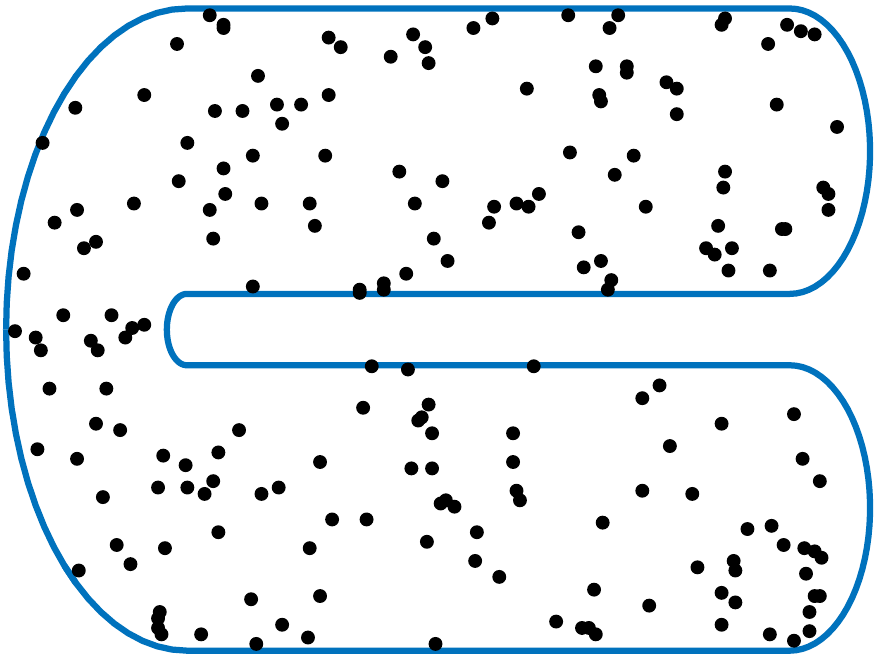} \\[-3pt]
			(a) & & (b)  \\
		\end{tabular}
	\end{center}
	\caption{(a) true function of $g(\cdot)$; (b) sampled location points of replicate 1.}
	\label{FIG:eg1_true}
\end{figure}

Figure \ref{FIG:eg1_tri} (a)-(c) illustrate three different triangulations used in the BPST method. In the first triangulation ($\triangle_1$), we use $89$ triangles ($73$ vertices), and there are $158$ triangles ($114$ vertices) and $286$ triangles ($186$ vertices) in $\triangle_2$ and  $\triangle_3$, respectively. To implement the TPS and KRIG methods, we use the R package \textit{fields} under the standard implementation setting of \citep{Furrer:Nychka:Sainand:11}. For KRIG, we try different covariance structures, and we choose the Mat\'{e}rn covariance with smoothness parameter $\nu=1$, which gives the best prediction.  For the GLTPS, following \cite{Wang:Ranalli:07}, we also use 40 knots with locations selected using the ``\texttt {cover.design}" method in the package \textit{fields}.  For all the methods requiring a smoothing or roughness parameter, GCV is used to choose the values of the parameter.

\begin{figure}[htbp]
	\begin{center}
		\begin{tabular}{ccc}
			\includegraphics[height=3.25cm]{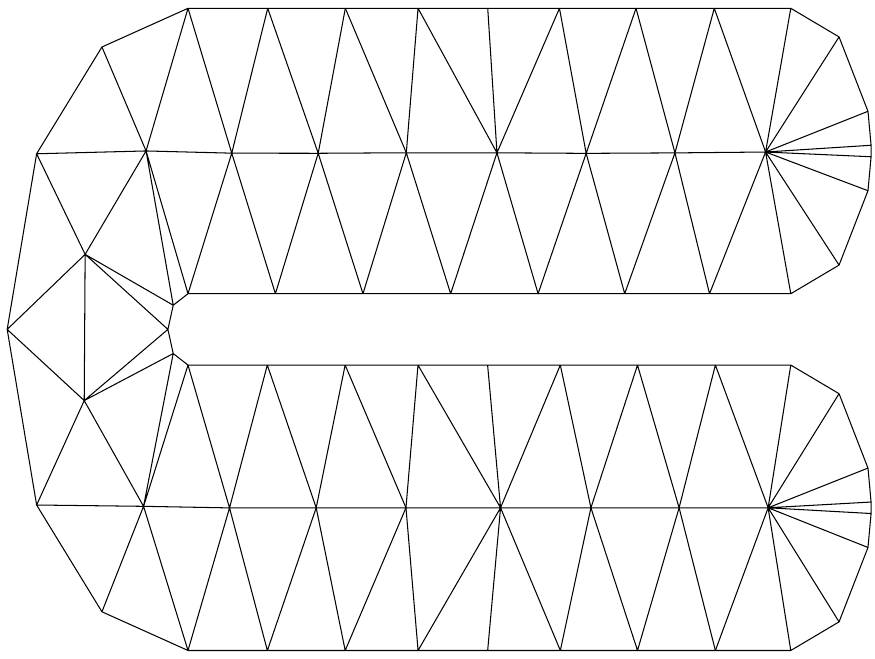} & \includegraphics[height=3.25cm]{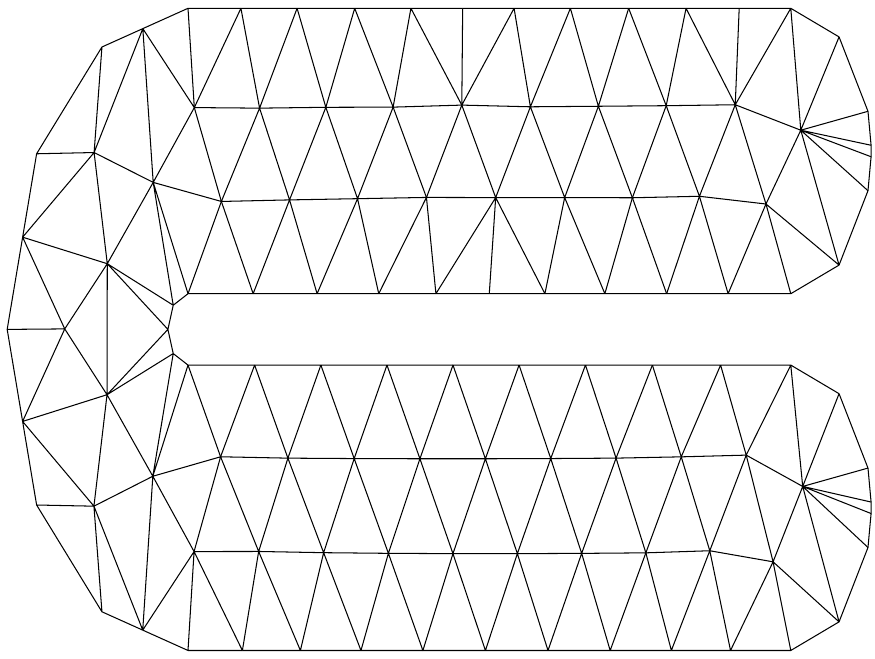} & \includegraphics[height=3.25cm]{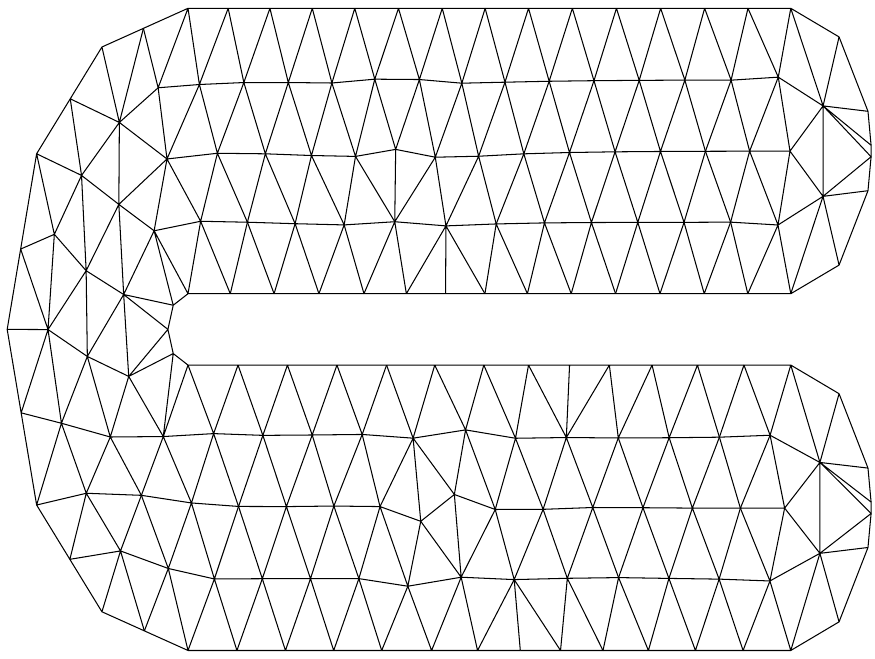} \\
			(a) triangulation $\triangle_1$ & (b) triangulation $\triangle_2$ & (c) triangulation $\triangle_3$ \\
		\end{tabular}
	\end{center}
	\caption{Three different triangulations on the horseshoe domain.}
	\label{FIG:eg1_tri}
\end{figure}

To see the accuracy of the estimators, we compute the root mean squared error (RMSE) for each of the components based on $100$ Monte Carlo samples. Table \ref{TAB:eg1_1} shows the RMSEs of the estimate of the parameters $\beta_{1}$, $\beta_{2}$, $\sigma_{\varepsilon}$. The RMSE for the nonlinear function $g (\cdot)$ is computed as the average of $[1000^{-1}\sum_{i=1}^{1000}\{\widehat{g}(\mathbf{X}_{i})-g(\mathbf{X}_{i})\}^{2}]^{1/2}$ based on $1000=50\times 20$ grid points over the 100 Monte Carlo replications. From Table \ref{TAB:eg1_1}, one sees that BPST produces the best estimation of the nonlinear function $g(\cdot$), followed by the GLTPS and FEM. The RMSE is nearly constant for all three triangulations, which shows that $\triangle_1$ might be sufficiently fine to capture the feature in the dataset. It also suggests that, when this minimum number of triangles is reached, further refining the triangulation will have little effect on the fitting process, but makes the computational burden unnecessarily heavy. Table \ref{TAB:eg1_1} also provides the 10-fold cross-validation root mean squared prediction error (CV-RMSPE) for the response variable, defined as $\left\{{n}^{-1}\sum_{m = 1}^{10} \sum_{i \in \kappa_{m}} (\widehat{Y}_i-Y_i) ^ 2\right\}^{1/2}$ over the 100 Monte Carlo replications, where $\kappa_1, \ldots, \kappa_{10}$ comprise a random partition of the dataset into $10$ disjoint subsets of equal size. The CV-RMSPE also shows the superior performance of the BPST method as it provides the most accurate predictions.

\begin{table}[htbp]
	\caption{\label{TAB:eg1_1}Root mean squared errors of the estimates.}
	\centering
	\begin{tabular}{clccccc}\hline\hline
		\multirow{2}{*}{$\rho$}   & \multirow{2}{*}{Method} & \multicolumn{4}{c}{RMSE} & \multicolumn{1}{c}{CV-RMSPE}\\ \cline{3-6}
		& &$\beta_{1}$ &$\beta_{2}$ &$\sigma_{\varepsilon}$ &$g(\cdot)$ & Y\\ \hline
		
		\multirow{7}{*}{0.0} &KRIG &0.0582 &0.0433 &0.0455 &0.3972 &0.6728\\
		&TPS &0.0543 &0.0426 &0.0365 &0.3013 &0.6037\\
		&GLTPS &0.0625 &0.0544 &0.0233 &0.1565 &0.5326\\
		&FEM &0.0560 &0.0480 &0.0348 &0.1558 &0.5333\\
		&BPST ($\triangle_1$) &0.0526 &0.0498 &0.0209 &0.1473 &0.5299\\
		&BPST ($\triangle_2$) &0.0483 &0.0489 &0.0220 &0.1483 &0.5210\\
		&BPST ($\triangle_3$) &0.0544 &0.0544 &0.0222 &0.1458 &0.5248\\\hline
		
		\multirow{7}{*}{0.7} &KRIG &0.0586 &0.0440 &0.0460 &0.3973 &0.6728\\
		&TPS &0.0547 &0.0402 &0.0363 &0.3010&0.6038\\
		&GLTPS &0.0612 &0.0411 &0.0220 &0.1553 &0.5326\\
		&FEM &0.0562 &0.0597 &0.0352 &0.1567 &0.5336\\
		&BPST ($\triangle_1$) &0.0521 &0.0563 &0.0209 &0.1473 &0.5294\\
		&BPST ($\triangle_2$) &0.0481 &0.0502 &0.0222 &0.1479 &0.5209\\
		&BPST ($\triangle_3$) &0.0543 &0.0479 &0.0220 &0.1457 &0.5251\\  \hline\hline
	\end{tabular}
\end{table}

Figures \ref{FIG:eg1_2} shows the estimated functions over a grid of $500\times 200$ points via different methods for replicate 1 for $\rho=0.0$. Since such high resolution prediction is computationally too expensive for the GLTPS, so the prediction map for the GLTPS is based on $100\times 40$ grid points. From those plots, one sees that the BPST and GLTPS estimates look visually better than the other four estimates. In addition, one notices that there is a ``leakage effect'' in KRIG and TPS estimates, and this poor performance is because KRIG and TPS do not take the complex boundary into any account and smooth across the gap inappropriately. Finally, one sees that the BPST estimators based on the three different triangulations are very similar, which agrees with our findings for penalized splines that the number of triangles is not very critical for the fitting as long as it is sufficiently large enough to capture the pattern and features of the data. Similar estimation results are obtained for the case $\rho=0.7$. Some sample estimated functions are presented in Figure \ref{FIG:eg1_3} in Appendix B to save space.

\begin{figure}[htbp]
	\begin{center}
		\begin{tabular}{cccc}
			\includegraphics[height=2.85cm]{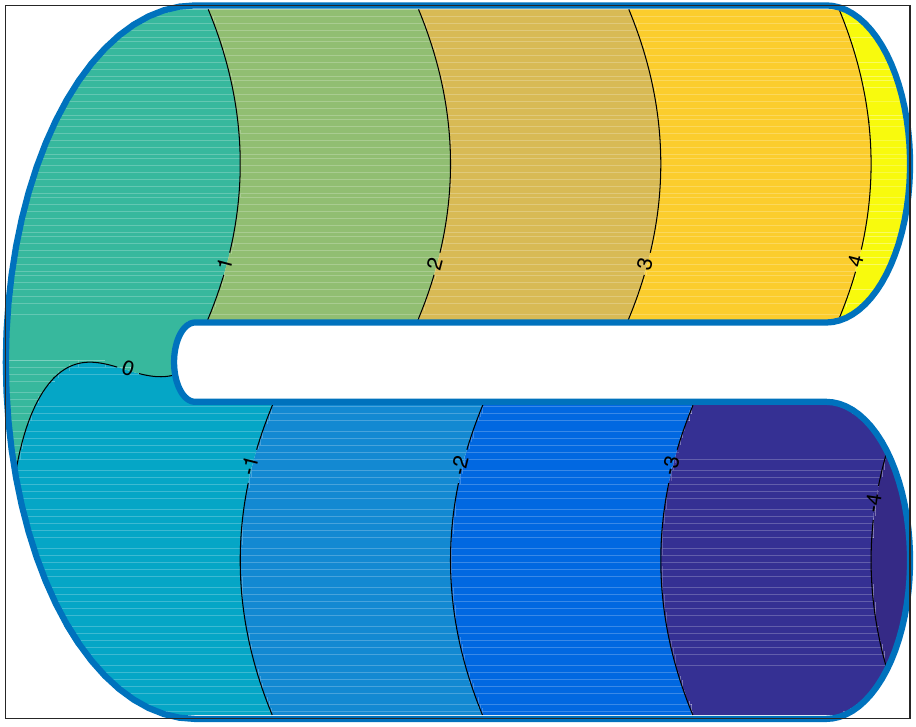} & \includegraphics[height=2.85cm]{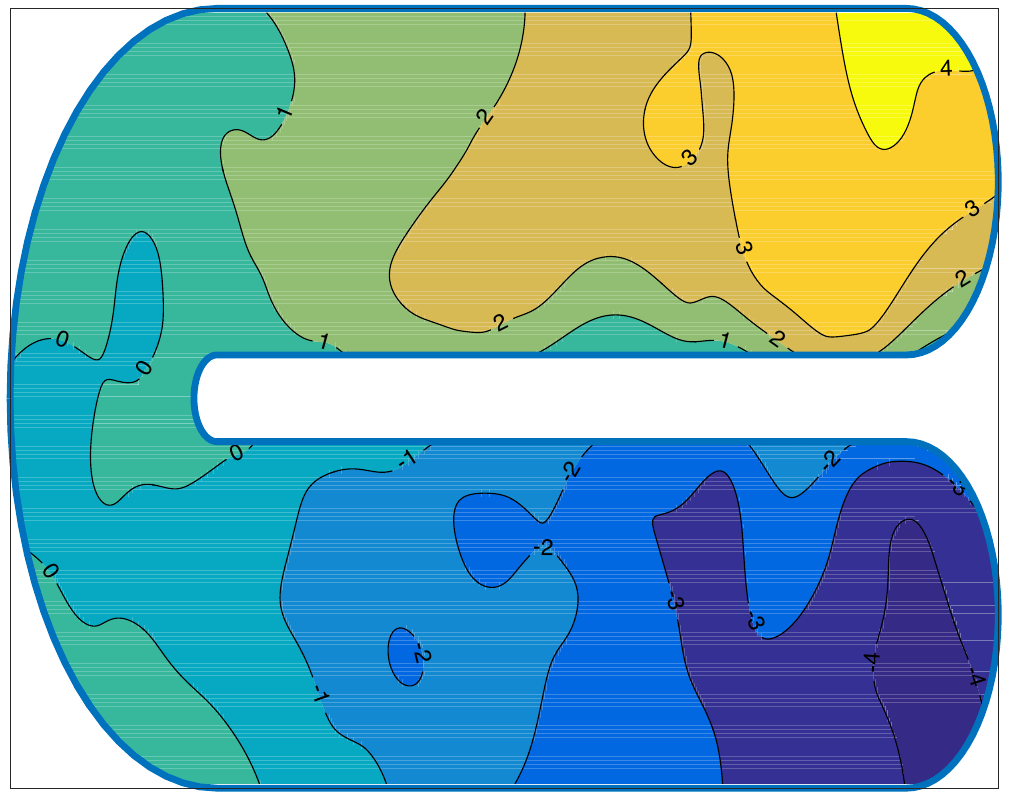} &\includegraphics[height=2.85cm]{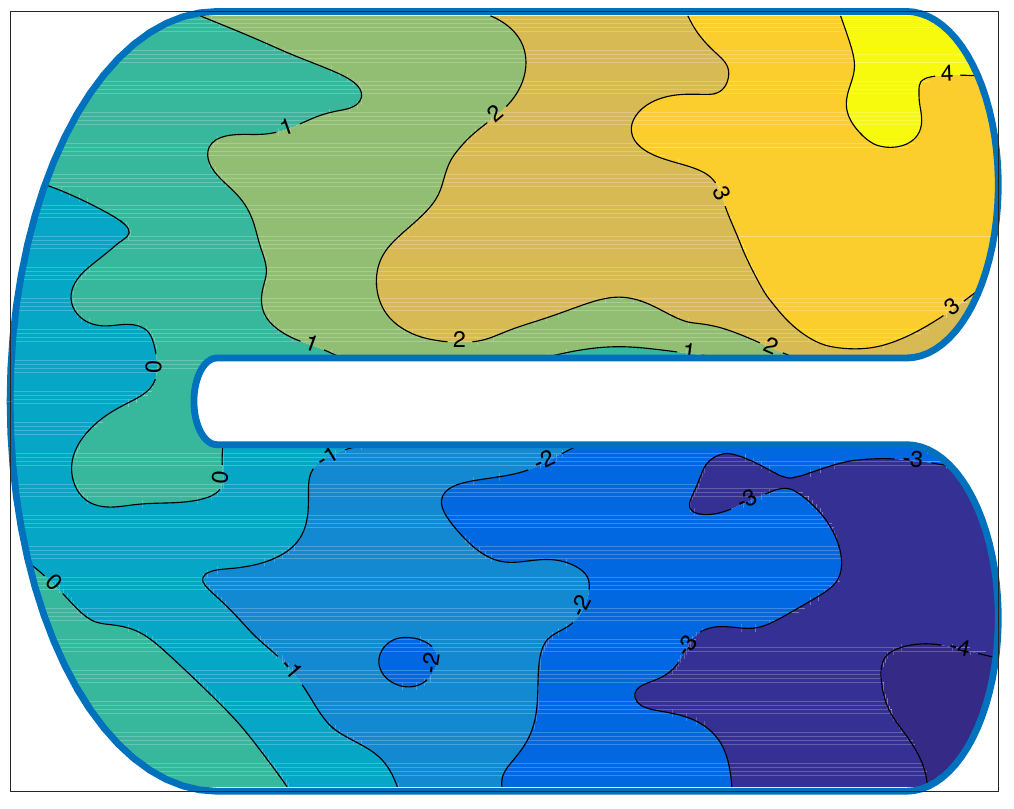}  &
			\includegraphics[height=2.85cm]{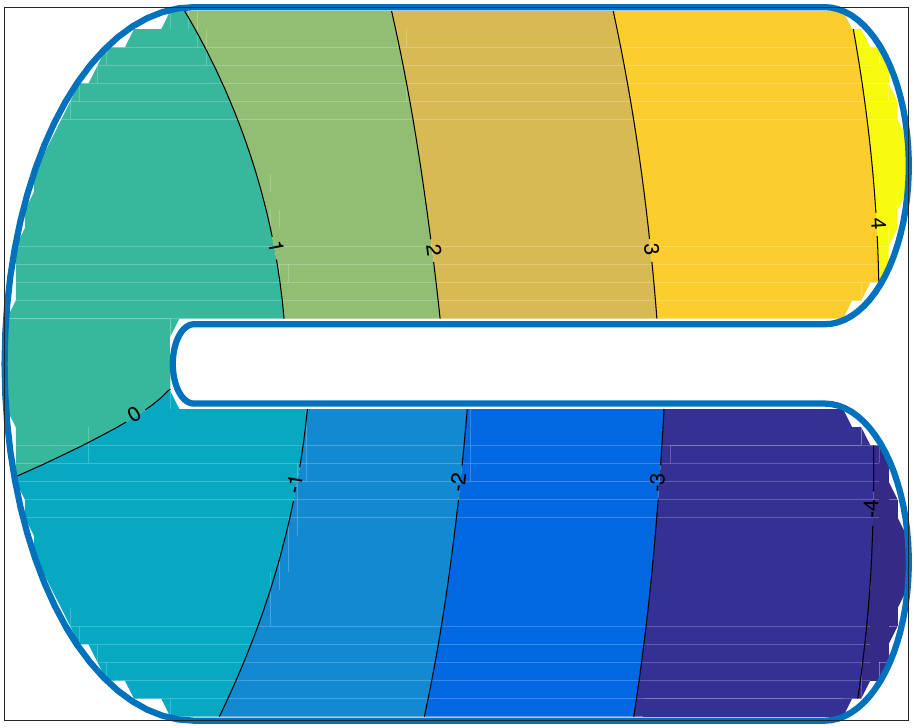}\\
			(a) True Contour & (b) KRIG & (c) TPS & (d) GLTPS\\[6pt]
			\includegraphics[height=2.85cm]{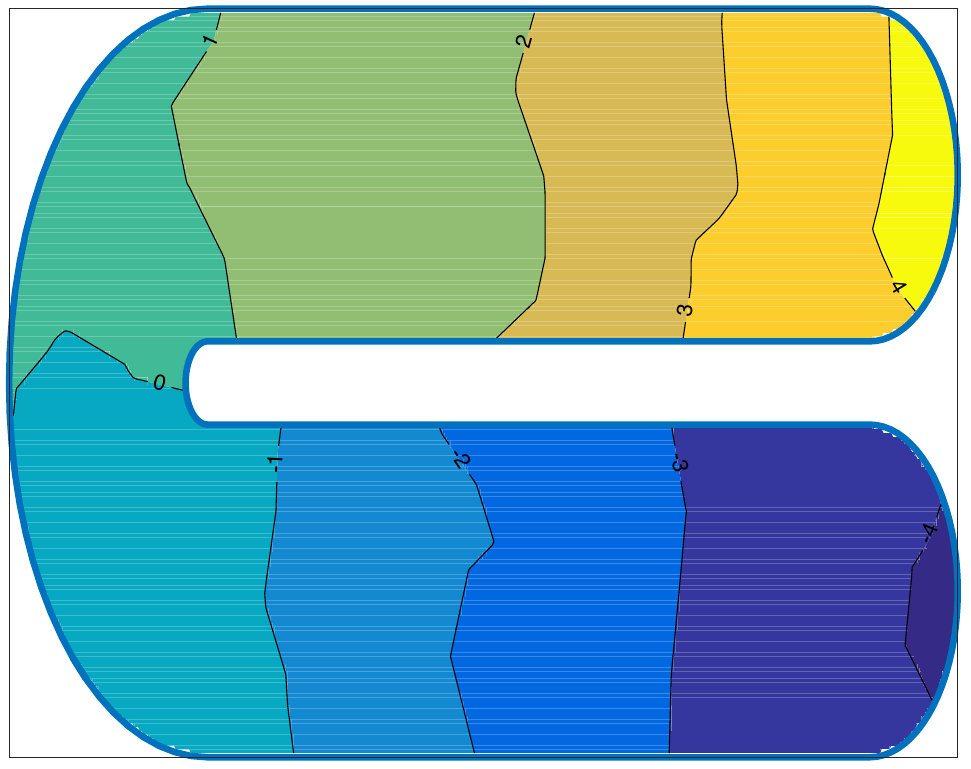} & \includegraphics[height=2.85cm]{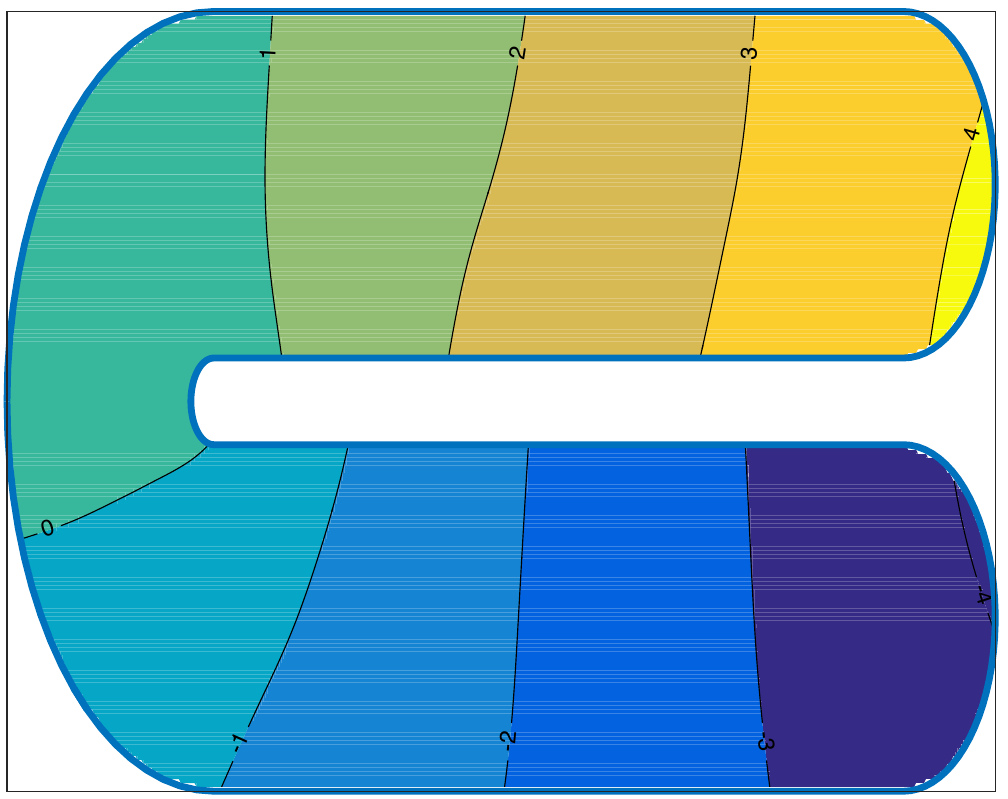} &
			\includegraphics[height=2.85cm]{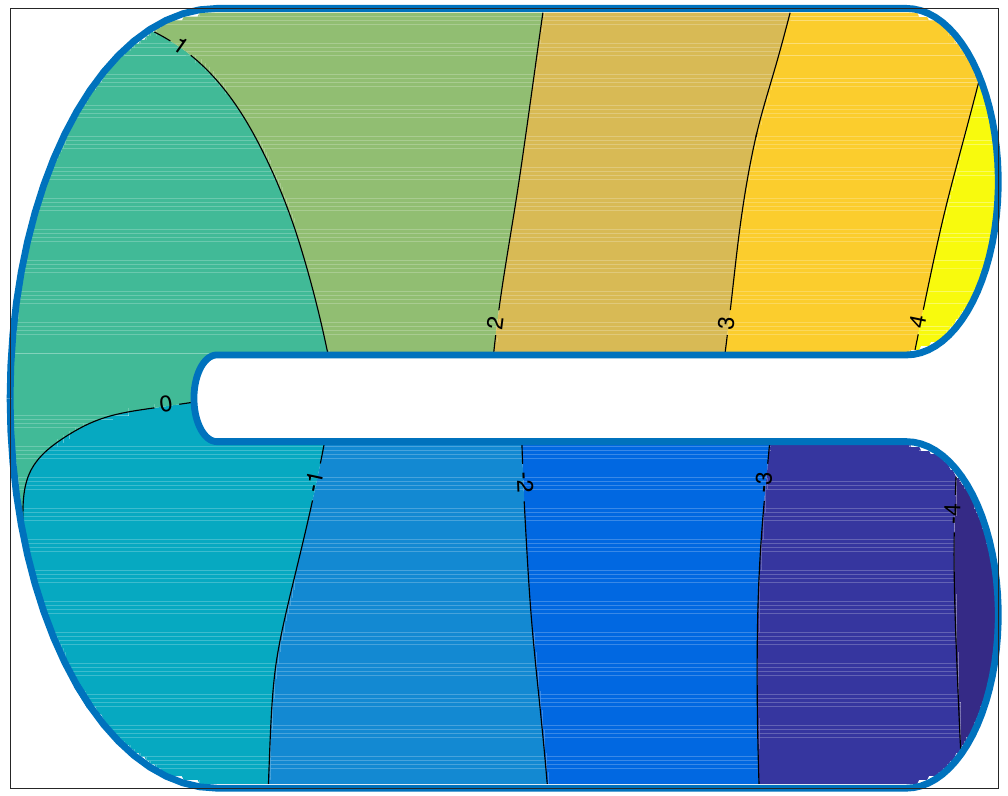} &
			\includegraphics[height=2.85cm]{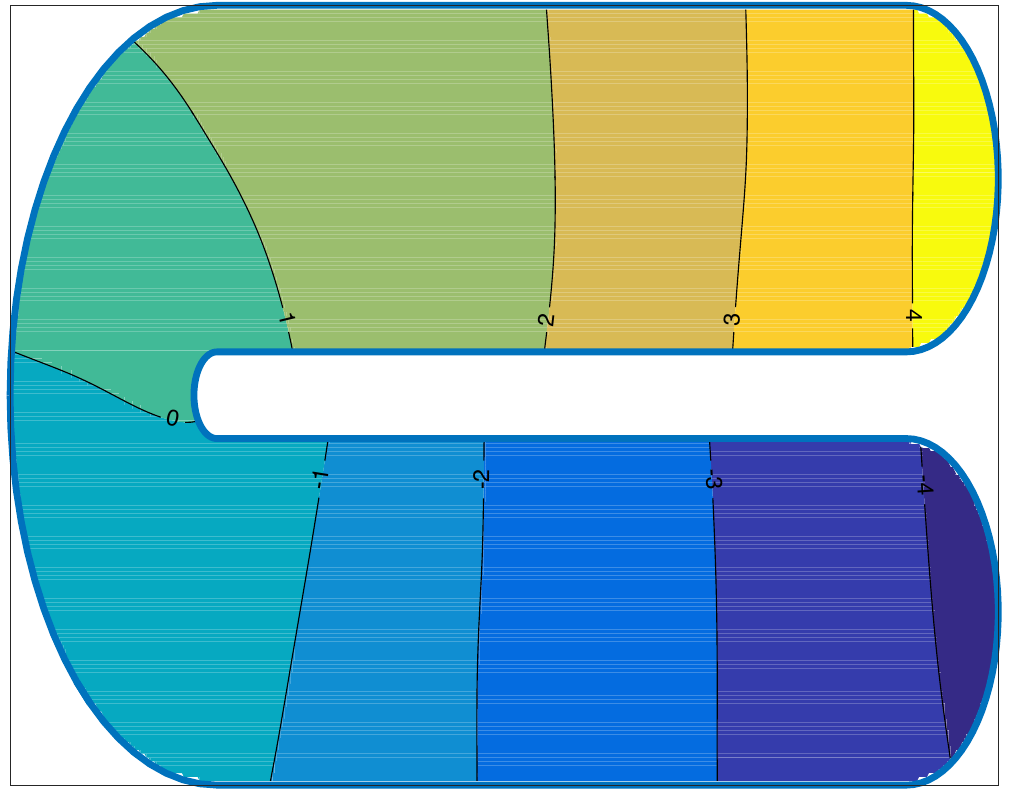}\\
			(e) FEM & (f) BPST ($\triangle_1$) & (g) BPST ($\triangle_2$) & (h) BPST ($\triangle_3$) \\
		\end{tabular}
	\end{center}
	\caption{Contour maps for the true function and its estimators ($\rho=0.0$).}
	\label{FIG:eg1_2}
\end{figure}

Next we test the accuracy of the standard error (SE) formula in (\ref{DEF:Sigma_n}) for $\hat{\beta}_{1}$ and $\hat{\beta}_{2}$, and the results are listed in Table \ref{TAB:eg1_2}. The standard deviations of the estimated parameters are computed based on $100$ replications, which can be regarded as the true standard errors (column labeled ``$\mathrm{SE}_{\mathrm{mc}}$") and compared with the mean and median of the $100$ estimated standard errors calculated using (\ref{DEF:Sigma_n}) (columns labeled `` $\mathrm{SE}_{\mathrm{mean}}$" and ``$\mathrm{SE}_{\mathrm{median}}$", respectively). The column labeled `` $\mathrm{SE}_{\mathrm{mad}}$" is the interquartile range of the $100$ estimated standard errors divided by $1.349$, which is a robust estimate of the standard deviation. From Table \ref{TAB:eg1_2} one observes that the averages or medians of the SEs calculated using the formula are very close to the true standard deviations, which confirms the accuracy of the proposed SE formula.

\begin{table}[htbp]
	\caption{\label{TAB:eg1_2}
		Standard error estimates of the coefficients via BPST ($\triangle_2$).}
	\centering
	\begin{tabular}{cccccc}\hline\hline
		$\rho$ &Parameter &$\mathrm{SE}_{\mathrm{mc}}$ &$\mathrm{SE}_{\mathrm{mean}}$ &$\mathrm{SE}_{\mathrm{median}}$ &
		$\mathrm{SE}_{\mathrm{mad}}$\\ \hline
		
		\multirow{2}{*}{0.0} &$\beta_{1}$ &0.0479 &0.0651 &0.0654 &0.0031\\
		&$\beta_{2}$ &0.0446 &0.0532 &0.0530 &0.0028\\ \hline
		
		\multirow{2}{*}{0.7} &$\beta_{1}$ &0.0477 &0.0651 &0.0653 &0.0029\\
		&$\beta_{2}$ &0.0420 &0.0518 &0.0522 &0.0024\\ \hline\hline
		
	\end{tabular}
\end{table}

In terms of the computational complexity, since the GLTPS technique is largely based on Floyd's algorithm, it has cubic time complexity \citep{miller2014finite} like the ordinary kriging. In contrast, TPS, FEM and BPST can be formulated as one single least squares problem, thus, the computing is very easy and fast. Taking the prediction as an example, we find that as the prediction size increases (sample size is fixed), the computation time for GLTPS and KRIG increases dramatically, while BPST provides an almost linear complexity of the prediction size. On a standard PC with processor Core i5 @2.9GHz CPU and 16.00GB RAM, the BPST($\triangle_{1}$) prediction over $2500\times 1000$ grid points needs only 10 seconds of computing, BPST($\triangle_{2}$) and BPST($\triangle_{3}$) with finer triangulations takes just a few seconds longer than BPST($\triangle_{1}$). However, the GLTPS usually has to spend hours to complete one estimation and prediction at the $100 \times 40$ resolution level. In addition, in our numerical study, we notice that KRIG requires a large amount of memory. When the prediction resolution goes is finer than $2500\times 1000$, KRIG will crash on a standard PC due to lack of memory.

\setcounter{chapter}{5} \renewcommand{\thetheorem}{5.\arabic{theorem}}
\renewcommand{\thelemma}{5.\arabic{lemma}}
\renewcommand{\theproposition}{5.\arabic{proposition}}
\renewcommand{\thetable}{5.\arabic{table}} \setcounter{table}{0} 
\renewcommand{\thefigure}{5.\arabic{figure}} \setcounter{figure}{0} 
\setcounter{equation}{0} \setcounter{lemma}{0} \setcounter{theorem}{0}
\setcounter{proposition}{0}\setcounter{corollary}{0}
\vskip .12in \noindent \textbf{5. Application to Mercury Concentration Studies in New Hampshire Estuary} \vskip 0.10in
\label{sec:application} 

In this section we apply the proposed method to map the mercury in sediment concentration over the estuary in New Hampshire; see Figure \ref{FIG:MC_vtrue} (a) for a regional map of the estuary. Mercury contamination is a significant public health and environmental problem. When released into the environment, mercury accumulates in water laid sediments, is ingested by fish and passed along the food chain to humans. Several rivers flowing into the Great Bay are contaminated with mercury according to the new Environment New Hampshire report. Estuaries such as Great Bay are ideal locations for the accumulation of contaminants like mercury that settle out from inputs of the surrounding watershed \citep{brown2015effect}. The coastal monitoring program -- National Coastal Assessment -- in the US Environmental Protection Agency (EPA) and the New Hampshire Department of Environmental Services have developed surveys that can reveal useful information on the status and trends of contaminants.

The spatial dataset in our study consists the mercury concentrations surveyed in the years 2000/2001 and 2003 at 97 locations in the largest estuary in New Hampshire; see Figure \ref{FIG:MC_vtrue} (b) for different measurements of mercury concentrations at different sampled locations. To assist decision-makers to develop effective environmental protection strategies, it is critical to provide the measurement of mercury at spatial scales much finer than those at which the mercury was monitored.

This dataset has been studied in \cite{Wang:Ranalli:07} via the GLTPS. Following \cite{Wang:Ranalli:07}, we consider a PLM with a linear term for the year effect (Year $=0$, if survey was conducted in year 2000/2001; and Year $=1$ if survey was conducted in 2003):
\begin{equation}
\textrm{Mercury Concentration}=\beta\textrm{Year}+g(\textrm{Latitude, Longitude}).
\label{app:plm}
\end{equation}
To fit model (\ref{app:plm}), we use five different methods: KRIG, TPS, GLTPS, FEM and BPST. For KRIG, we choose the Mat\'{e}rn covariance structure to fit the model. The GLTPS is calculated using the setting $k = 5$ as in \cite{Wang:Ranalli:07}.  For BPST and FEM, the smoothing or roughness parameter is selected by the GCV. Figure \ref{FIG:MC_tri} shows the triangulation adopted by the BPST. Table \ref{TAB:APPest} summarizes the coefficient estimation results based on different methods.

\begin{figure}[htbp]
	\begin{center}
		\includegraphics[height=10cm]{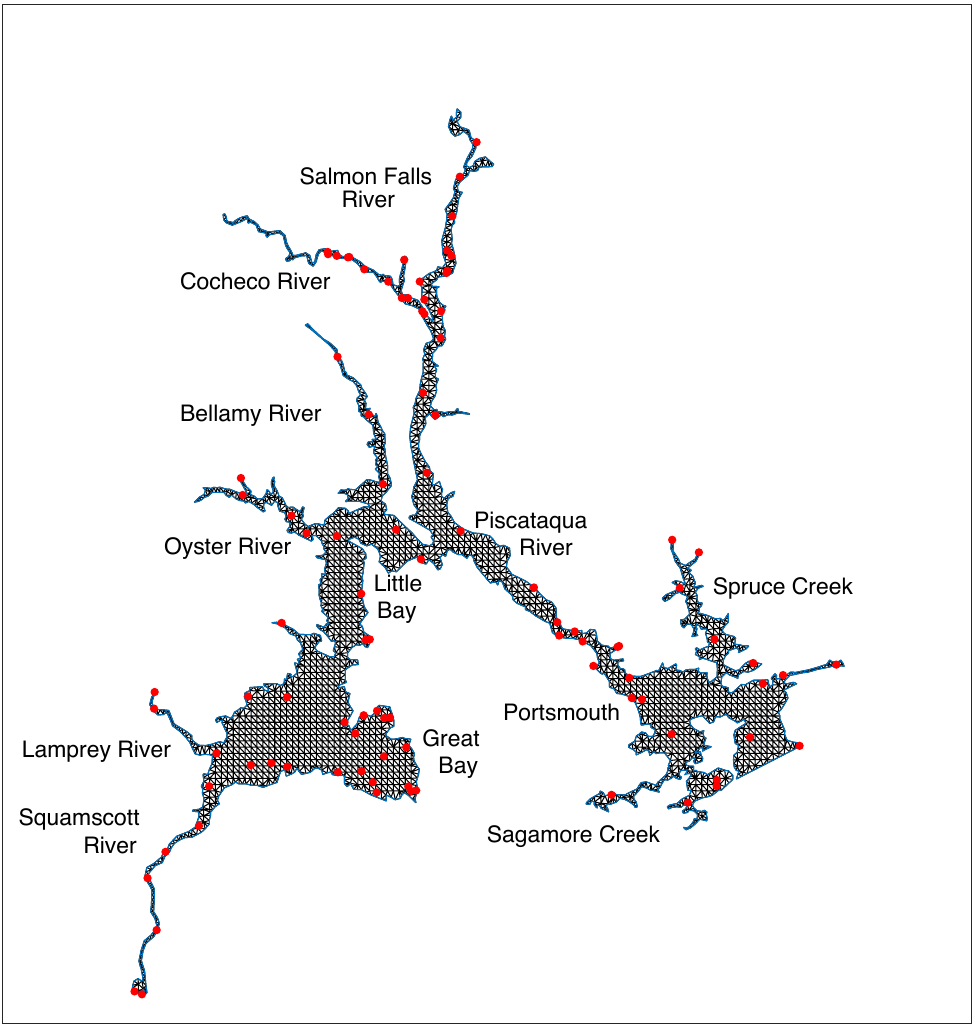}
	\end{center} \vskip -.4in
	\caption{Domain triangulation for estuaries in New Hampshire.}
	\label{FIG:MC_tri}
\end{figure}

\begin{table}[htbp]
	{\textwidth=3\linewidth
		\caption{\label{TAB:APPest} Estimated coefficients with the standard errors (SE)}
		\centering
		\begin{tabular}{lccccccc} \hline\hline
			&KRIG &TPS  &GLTPS  &FEM &BPST\\ \hline
			Year &0.096 &0.095 &0.051 &0.076 &0.044\\
			SE &0.03 &0.03 &0.02 &0.04 &0.04\\ \hline\hline
		\end{tabular}\\}
\end{table}

The Great Bay estuary is a tidally-dominated system and is the drainage confluence of the Lamprey River and Squamscott River. Four additional rivers flowing into the system include the Cocheco, Salmon Falls, Bellamy, and Oyster rivers. Mercury deposited in the estuaries in New Hampshire is both emitted from in-state sources and carried here from sources upwind. Emissions upwind of New Hampshire are primarily attributable to coal-fired utilities and municipal and medical waste incinerators in the Northeast and Midwest \citep{abbott2008atmospheric}. The spatial distribution in Figure \ref{FIG:MC_vtrue} (b) shows generally higher values in the Salmon Falls River and Cocheco River, and lower values in the Piscataqua River and the Portsmouth area, and some localized low spots the Great Bay.

Prediction maps at $20\times 20$ m resolution level using different methods are shown in Figure \ref{FIG:MC_pred}. The computation-intensive GLTPS procedure has a problem in making such a high-resolution prediction, so we decrease its resolution to $150\times 150$ m.  All methods in Figure \ref{FIG:MC_pred} have identified relatively high mercury contamination in the Salmon Falls River and Cocheco River, which is consistent with known historical pollution sources  \citep{abbott2008atmospheric}. Figure \ref{FIG:MC_pred} also illustrates the overspill from the Northern part to the middle area when an ordinary spatial smoothing (such as KRIG and TPS) is used, as it smoothes across the Salmon Falls River and Cocheco River with high concentration levels in the northern part. This problem is mitigated for GLTPS and FEM. The BPST smoother does not show signs of leakage in the Piscataqua River and the Portsmouth area of the estuaries, as other methods do. Note the way in which the KRIG and TPS smooth, inappropriately, across the east coast of the Great Bay, so that relatively high mercury concentrations are estimated for the Portsmouth in the southeastern part of the estuaries. The poor prediction performance of KRIG and TPS suggests that we should not assume that densities in geographically neighboring areas will be similar if these areas are in fact separated by physical barriers.

\begin{figure}[htbp]
	\begin{center}
		\begin{tabular}{cc}
			\includegraphics[scale=0.25]{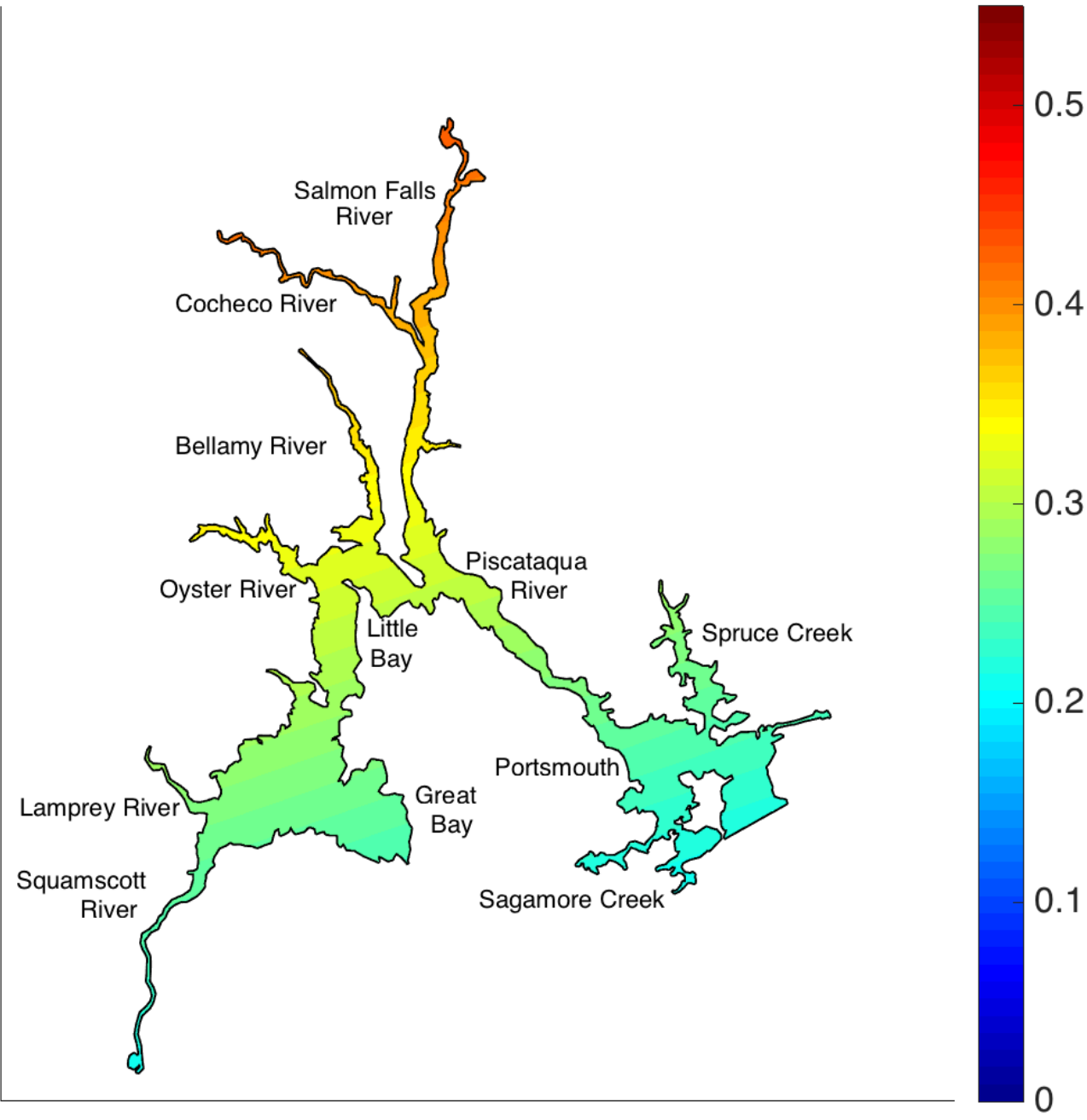} &
			\includegraphics[scale=0.25]{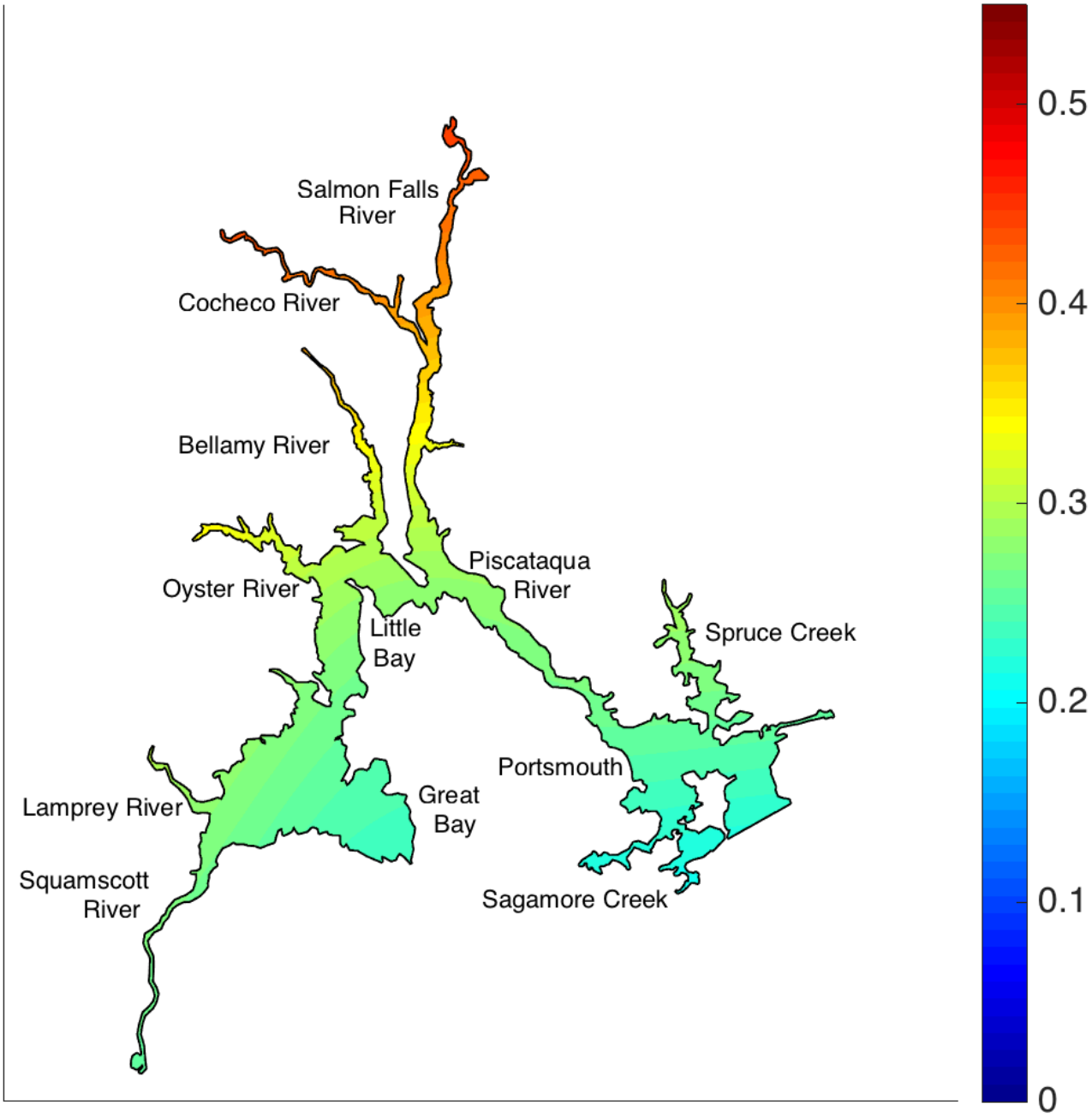}  \\
			(a) KRIG & (b) TPS \\
			\includegraphics[scale=0.25]{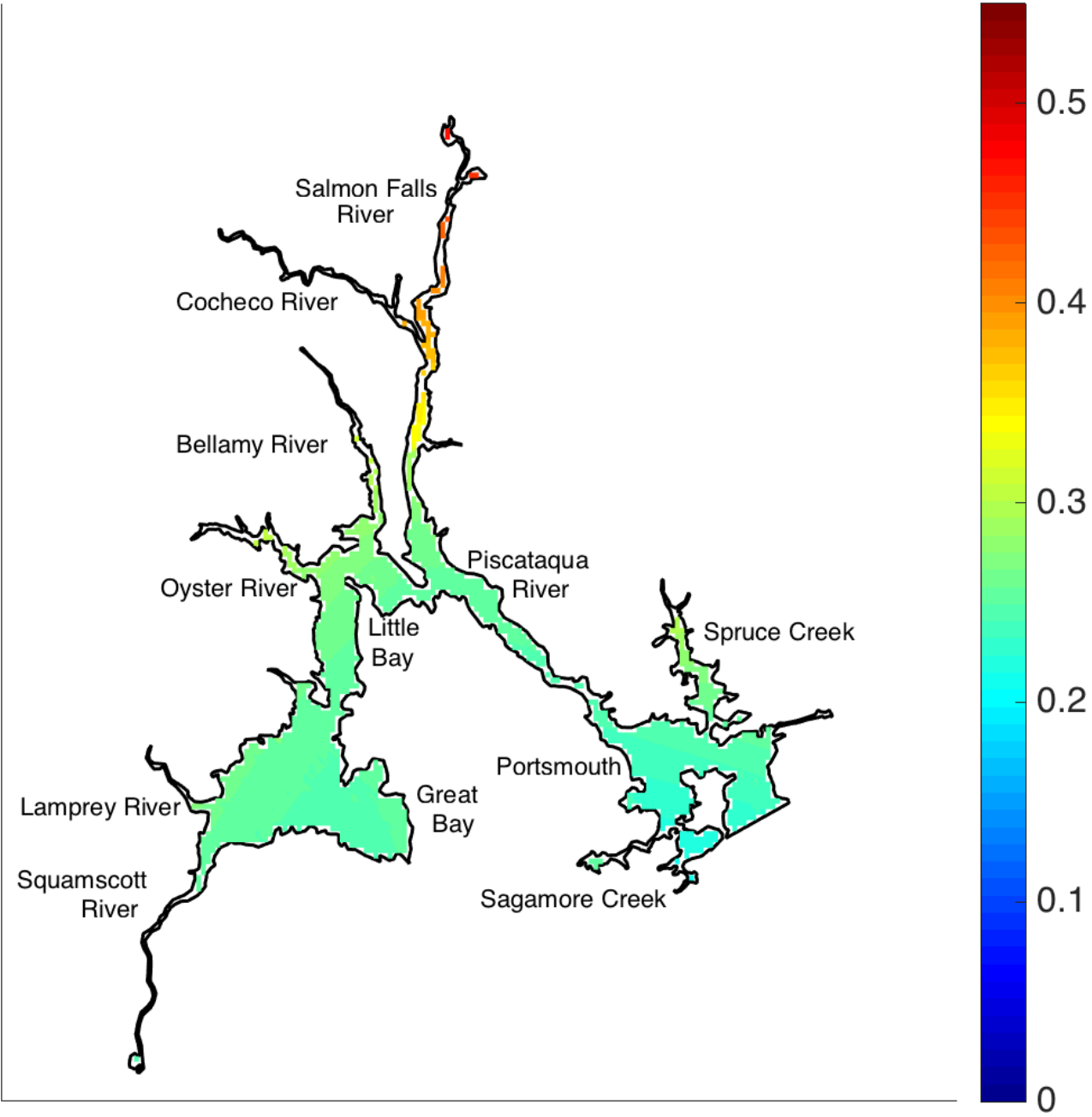} &
			\includegraphics[scale=0.25]{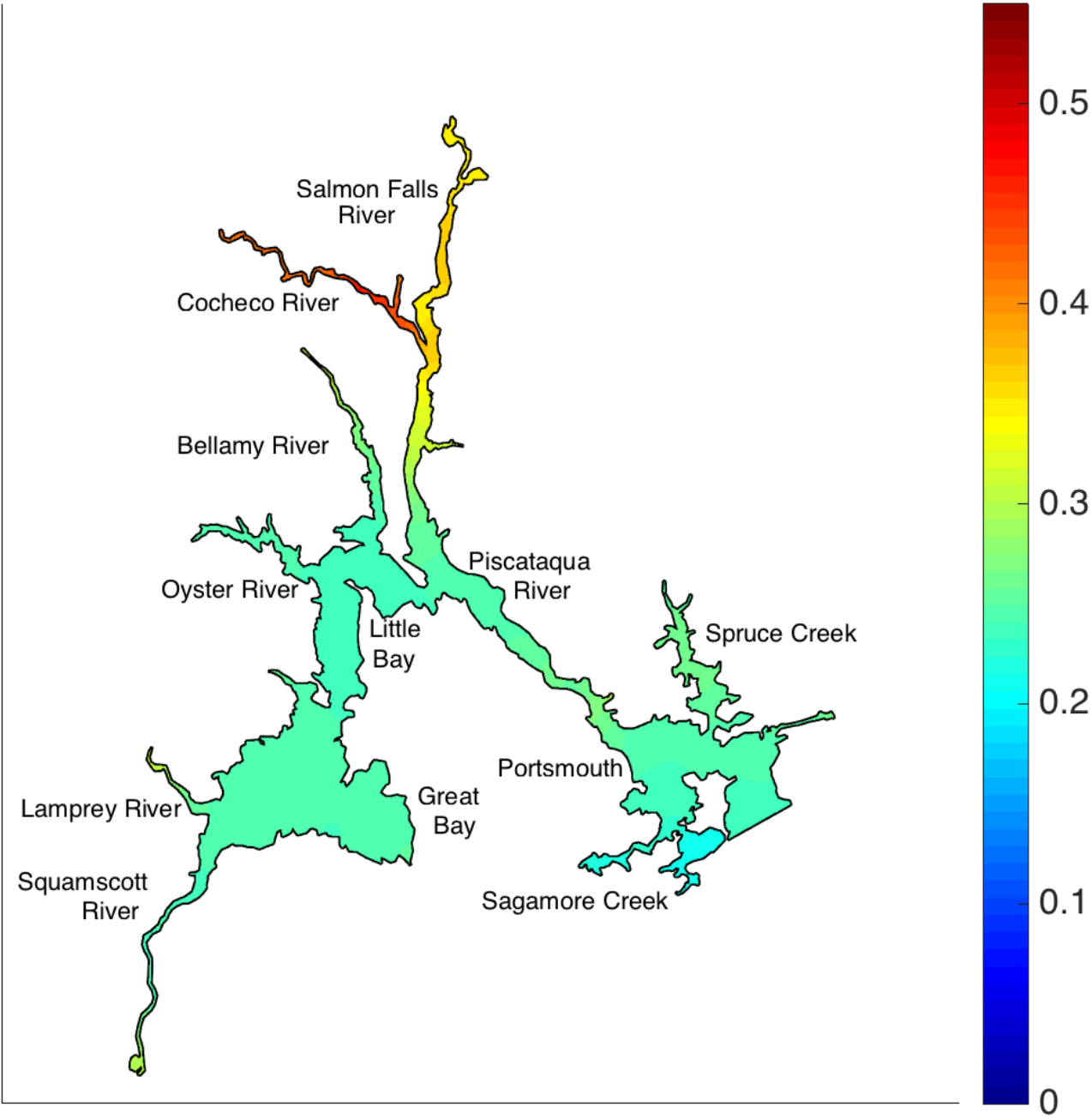} \\
			(c) GLTPS & (d) FEM\\
			\includegraphics[scale=0.25]{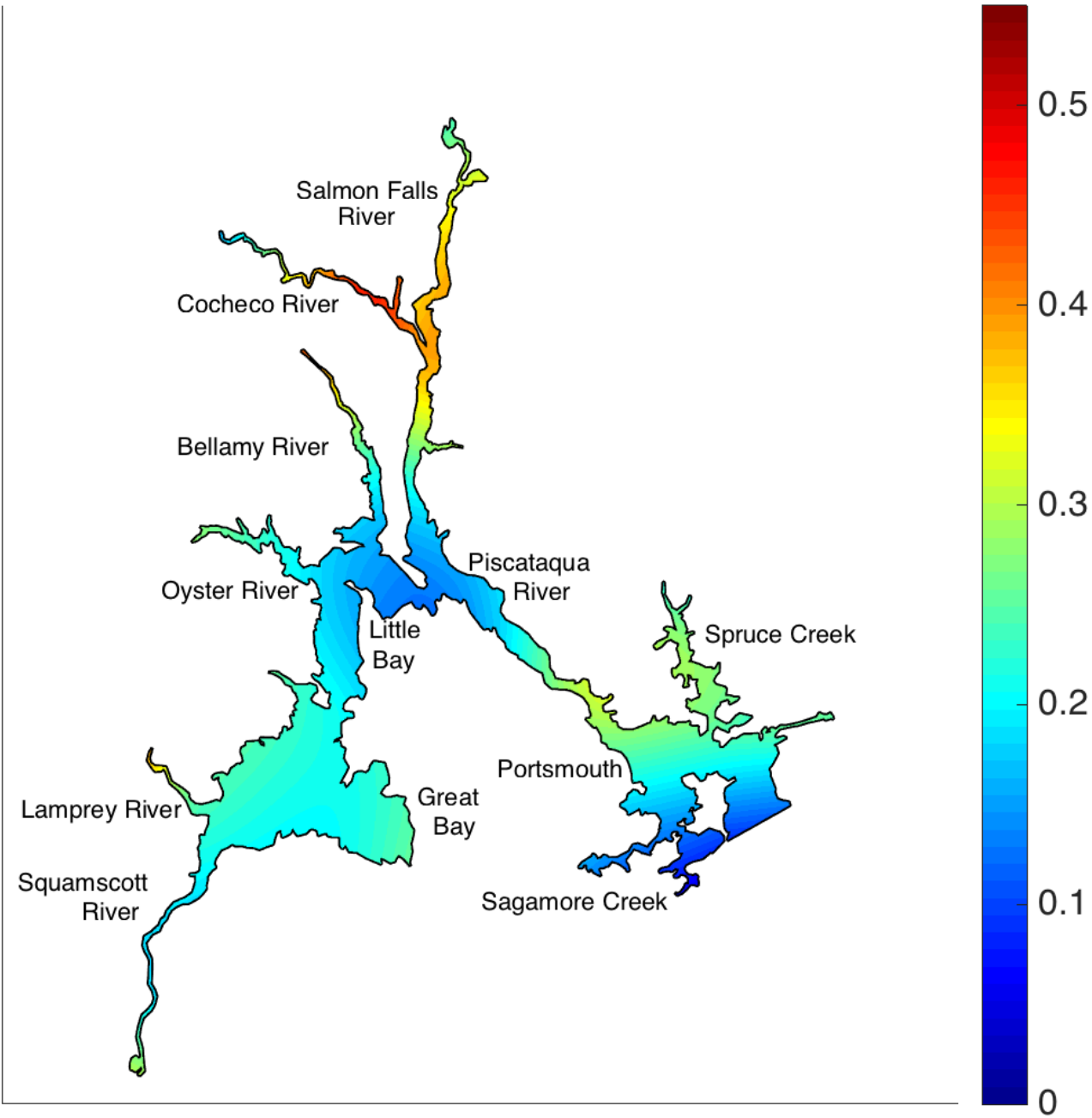}
			&\includegraphics[scale=0.36]{mc_map.pdf}\\
			(e) BPST &(f) Observed Data
		\end{tabular}
	\end{center} \vskip -.2in
	\caption{Prediction maps of mercury concentrations over the estuaries in New Hampshire.}
	\label{FIG:MC_pred}
\end{figure}

To evaluate different methods, for each method, we report both the in-sample root mean squared errors (RMSE): $\{n^{-1}\sum_{i=1}^{n}(Y_i-\widehat{Y}_i)^2\}^{1/2}$, and the cross-validation root mean squared prediction errors  (RMSPE) of the mercury concentrations. Since there are only 97 observations in this dataset, we consider leave-one-out cross-validation (LOOCV) prediction error instead of the 10-fold cross-validation as conducted in simulation studies. Specifically, for each $i=1, \ldots, 97$, we train the model on every point except $i$, and then obtain the prediction error on the held out point. Table \ref{TAB:APPmspe} summarizes the RMSE and the LOOCV-RMSPE using different methods. As expected, when the shape of the boundary is complex, smoothers respecting the complicated boundary shape appropriately are able to reduce the prediction errors. The LOOCV-RMSPE is in favor of the model with the BPST smoother, which not only gives the best model fit, but also provides the most accurate prediction of the concentration values among all the methods.

\begin{table}[htbp]
	\caption{\label{TAB:APPmspe}
		In-sample RMSEs and LOOCV-RMSPE of mercury concentrations.}
	\centering
	\begin{tabular}{cccccc} \hline\hline
		Method &KRIG &TPS &GLTPS &FEM &BPST\\ \hline
		RMSE &0.1397 &0.1381 &0.1366 &0.1263 &0.1197\\
		RMSPE &0.1480 &0.1473 &0.1459 &0.1467 &0.1402\\ \hline\hline
	\end{tabular}
\end{table}

\setcounter{chapter}{6} \setcounter{equation}{0} \vskip .10in
\noindent \textbf{6. Concluding Remarks} \label{sec:conclusion} \vskip 0.05in

In this paper, we have considered PLMs for modeling spatial data with complicated domain boundaries. We introduce a framework of bivariate penalized splines defined on triangulations in the semi-parametric estimation. Our BPST method has demonstrated competitive performance compared to existing methods, while providing a number of possible advantages.

First, the proposed method greatly enhances the application of non/semiparametric methods to spatial data analysis. It solves the problem of ``leakage'' across the complex domains where many conventional smoothing tools suffer from. The numerical results from the simulation studies and application show our method is very effective to account for complex domain boundaries. Our method does not require the data to be evenly distributed or on regular-spaced grids like the tensor product smoothing methods. When we have regions of sparse data, bivariate penalized splines provides a more convenient tool for data fitting than the unpenalized splines since the roughness penalty helps regularize the estimation. Relative to the conventional FEM, our method provides a more flexible way to use piecewise polynomials of various degrees and various smoothness over an arbitrary triangulation for spatial data analysis.

Secondly, we provide new statistical theories for estimating the PLM for data distributed on complex spatial domains. It is shown that our estimates of both parametric part and non-parametric part of the model enjoy excellent asymptotic properties. In particular, we have shown that our estimates of the coefficients in the parametric part are asymptotically normal and derived the convergence rate of the nonparametric component under regularity conditions. We have also provided a standard error formula for the estimated parameters and our simulation studies show that the standard errors are estimated with good accuracy. The theoretical results provide measures of the effect of covariates after adjusting for the location effect. In addition, they give valuable insights into the accuracy of our estimate of the PLM and permit joint inference for the parameters.

Finally, our proposed method is much more computationally efficient compared with other approaches such as kriging and GLTPS. Specifically, for model fitting with $n$ locations, the computational complexity of the ordinary kriging and GLTPS is $O(n^3)$, while the computational complexity of our method is only $O(nN^2)$, where $N$ is the number of triangles in the triangulation and is usually much smaller than $n$ as suggested in Condition (C4).

\setcounter{chapter}{7} \setcounter{equation}{0} \vskip .10in
\noindent \textbf{Acknowledgment}

The first author's research was supported in part by National Science Foundation grants DMS-1106816 and DMS-1542332, the second author's research was supported in part by College of William \& Mary Faculty Summer Research Grant and the third author's research was supported in part by National Science Foundation grant DMS-1521537 and Simons collaboration grant \#280646. The authors would like to thank Haonan Wang and M. Giovanna Ranalli for providing the New Hampshire estuary data. This paper has not been formally reviewed by the EPA. The views expressed here are solely those of the authors. The EPA does not endorse any products or commercial services mentioned in this report. Finally, the authors would like to thank the editor, the associate editor and reviewers for their valuable comments and suggestions to improve the quality of the paper.

\newpage
\vskip 0.10in \noindent \textbf{Appendices}

\setcounter{chapter}{8} \renewcommand{\thetheorem}{A.\arabic{theorem}}
\renewcommand{\theproposition}{A.\arabic{proposition}}
\renewcommand{\thelemma}{A.\arabic{lemma}}
\renewcommand{\thecorollary}{A.\arabic{corollary}}
\renewcommand{\theequation}{A.\arabic{equation}} \renewcommand{\thesubsection}{A.\arabic{subsection}}
\renewcommand{\thetable}{{\arabic{table}}} \setcounter{table}{0}
\renewcommand{\thefigure}{\arabic{figure}} \setcounter{figure}{0}
\setcounter{equation}{0} \setcounter{lemma}{0} \setcounter{proposition}{0}
\setcounter{theorem}{0} \setcounter{subsection}{0}\setcounter{corollary}{0}

\vskip .05in \noindent \textbf{A. Choosing the Triangulation}
\label{SEC:triangulation}

The triangulation selection is one of the key ingredients for obtaining good performance of the bivariate splines estimation. An optimal triangulation is a partition of the domain which is best according to some criterion that measures the shape, size or number of triangles. For example, one of the well-known criteria used to control the shape with a triangulation is the ``max-min" criterion which maximizes the minimum angle of all the angles of the triangles in the triangulation. Based on the ``max-min" criterion, the Delaunay triangulation algorithm can be implemented to avoid sliver triangles (a triangle that is almost flat) when a set of appropriate vertices is chosen. In the past few decades, various packages have been developed to realize the Delaunay algorithm; see MATLAB program \textit{delaunay.m} or MATHEMATICA function \textit{DelaunayTriangulation}. ``Triangle" \citep{Shewchuk:96} is also widely used in many applications, and one can download it for free from \url{http://www.cs.cmu.edu/~quake/triangle.html}. It is a C$++$ program for two-dimensional mesh generation and construction of Delaunay triangulations. ``DistMesh'' is another method to generate unstructured triangular and tetrahedral meshes; see the \textit{DistMesh} generator on \url{http://persson.berkeley.edu/distmesh/}. A detailed description of the program is provided by \cite{Persson:Strang:04}. Once the shape of triangulations is handled, we can simply focus on how to select the number of triangles, $K$, for quasi-uniform triangulations in all the numerical studies.

As is usual with the one-dimensional (1-D) penalized least squares (PLS) splines, the number of knots is not important given that it is above some minimum depending upon the degree of the smoothness; see \cite{Li:Ruppert:08}. For bivariate PLS splines, \cite{Lai:Wang:13} and \cite{Wang:Mu:Wang:17} also observed that the number of triangles $K$ is not very critical, provided $K$ is larger than some threshold. In fact, one of the main advantages of using PLS splines over unpenalized splines is the flexibility of choosing knots in the 1-D setting and choosing triangles in the 2-D setting. For unpenalized splines, one has to have large enough sample according to the requirement of the degree of splines on each subinterval in the 1-D case or each triangle in the 2-D case to guarantee that a solution can be found. However, there is no such requirement for PLS splines. When the smoothness $r \geq 1$, the only requirement for bivariate PLS splines is that there is at least one triangle containing three points which are not in one line \citep{Lai:08}. Also, PLS splines perform similarly to unpenalized splines as long as the penalty parameter $\lambda$ is very small. So in summary, the proposed bivariate PLS splines are very flexible and convenient for data fitting, even for smoothing sparse and unevenly sampled data over a domain with complicated boundary.

In practice, to form a good triangulation, we need to make certain that the triangulation is sufficiently fine to capture the feature in the dataset and not so large that computational burden is unnecessarily heavy. \cite{Wang:Mu:Wang:17} proposed to choose the number of triangles by generalized cross-validation (GCV) (\cite{Craven:Wahba:79}; \cite{Wahba:90}). As suggested by \cite{Wang:Mu:Wang:17}, we consider a sequence of trial values of the number of vertices of the triangles ``equally-spaced'' on the domain, and apply the Delaunay triangulation method. The more vertices we insert, the finer the triangulation. For each trial value, the PLS spline is fitted, and the value in that trial sequence that minimizes the GCV is selected.  \cite{Wang:Mu:Wang:17} provides extensive numerical studies to illustrate the practical performance of the GCV triangulation selection scheme.

\setcounter{chapter}{9} \renewcommand{\thetheorem}{B.\arabic{theorem}}
\renewcommand{\theproposition}{B.\arabic{proposition}}
\renewcommand{\thelemma}{B.\arabic{lemma}}
\renewcommand{\thecorollary}{B.\arabic{corollary}}
\renewcommand{\theequation}{B.\arabic{equation}} \renewcommand{\thesubsection}{B.\arabic{subsection}}
\renewcommand{\thetable}{{B.\arabic{table}}} \setcounter{table}{0}
\renewcommand{\thefigure}{B.\arabic{figure}} \setcounter{figure}{0}
\setcounter{equation}{0} \setcounter{lemma}{0} \setcounter{proposition}{0}
\setcounter{theorem}{0} \setcounter{subsection}{0} \setcounter{corollary}{0}
\vskip .05in \noindent \textbf{B. More Simulation Results}

\vskip .05in \noindent \textbf{B.1. Additional simulation result from Example 1}

Figure \ref{FIG:eg1_3} shows the estimated functions over a grid of $500\times 200$ points via different methods for replicate 1 with $\rho=0.7$. From those plots, it is clear that the BPST and GLTPS estimates perform better than the other four estimates. There seems to be some ``leakage effect'' in KRIG and TPS estimates, which is likely caused by the fact that KRIG and TPS do not take the complex boundary into any account and smooth across the gap inappropriately. Finally, as what we expected that the BPST estimators based on the three different triangulations are very similar, which confirms that the number of triangles is not very critical for the penalized spline fitting as long as it is sufficiently large enough to capture the pattern and features of the data.

\begin{figure}[htbp]
	\begin{center}
		\begin{tabular}{cccc}
			\includegraphics[height=2.5cm]{eg1_mtrue_contour.pdf} & 
			\includegraphics[height=2.5cm]{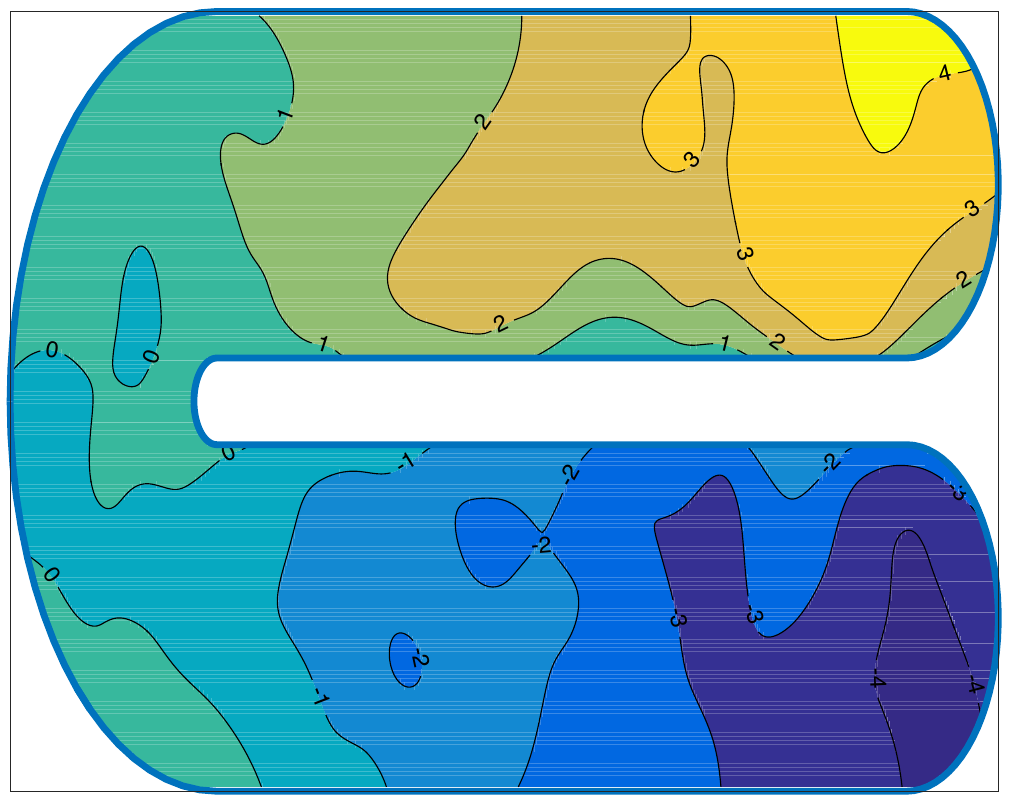} & 
			\includegraphics[height=2.5cm]{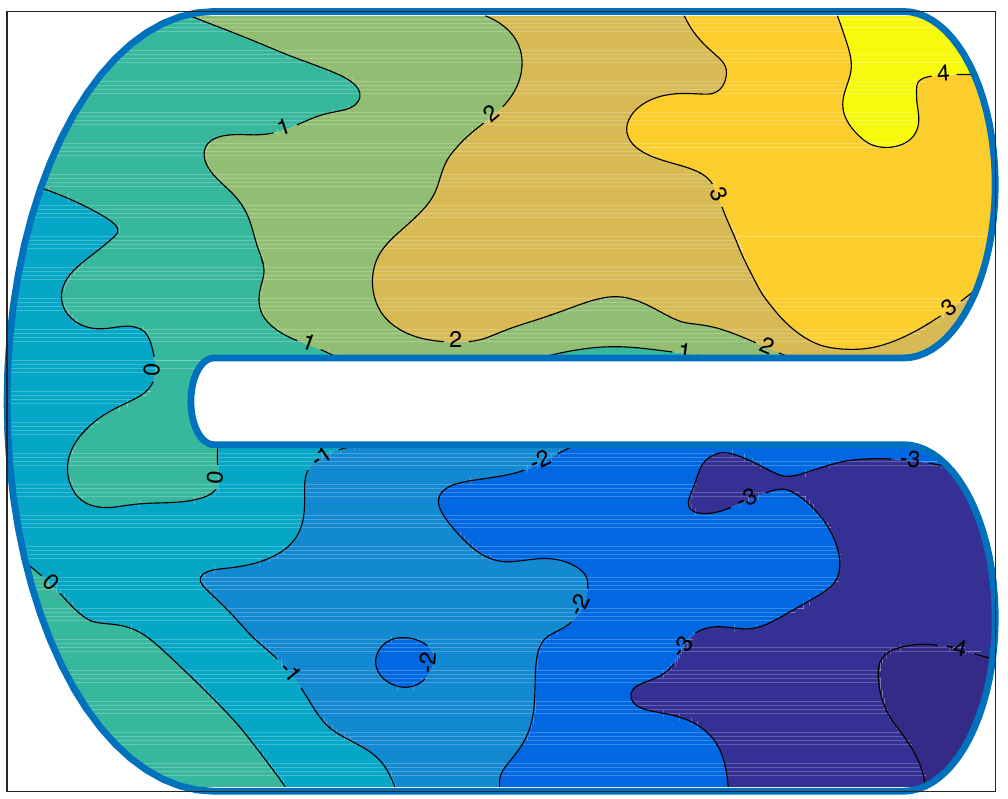}  &
			\includegraphics[height=2.5cm]{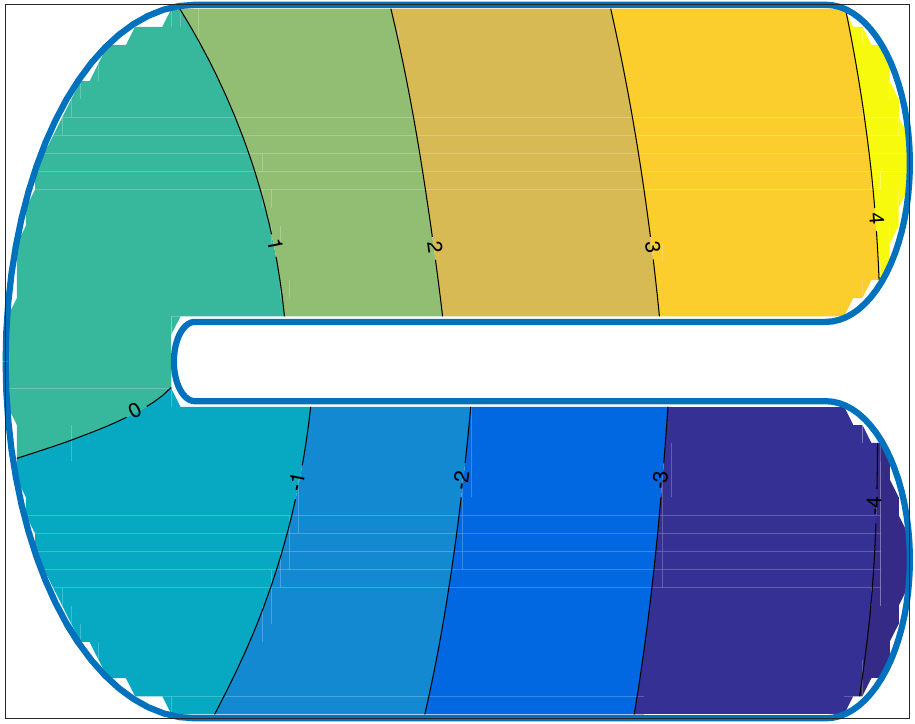}\\
			(a) True Contour & (b) KRIG & (c) TPS & (d) GLTPS\\[6pt]
			\includegraphics[height=2.5cm]{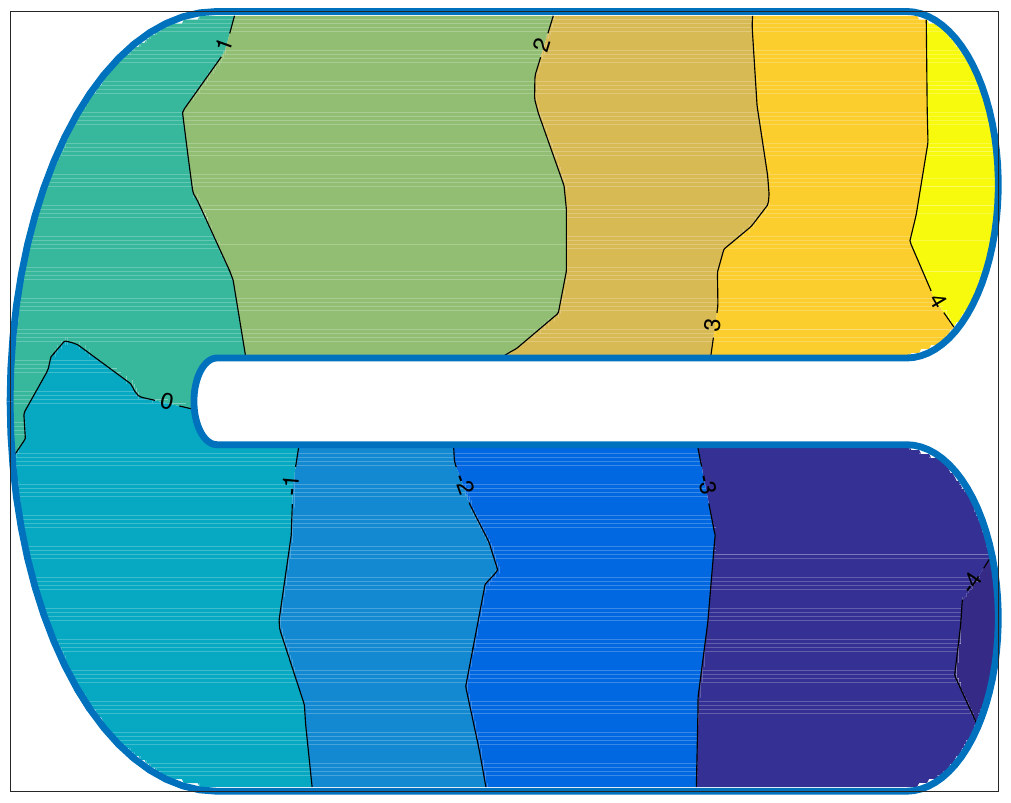} & \includegraphics[height=2.5cm]{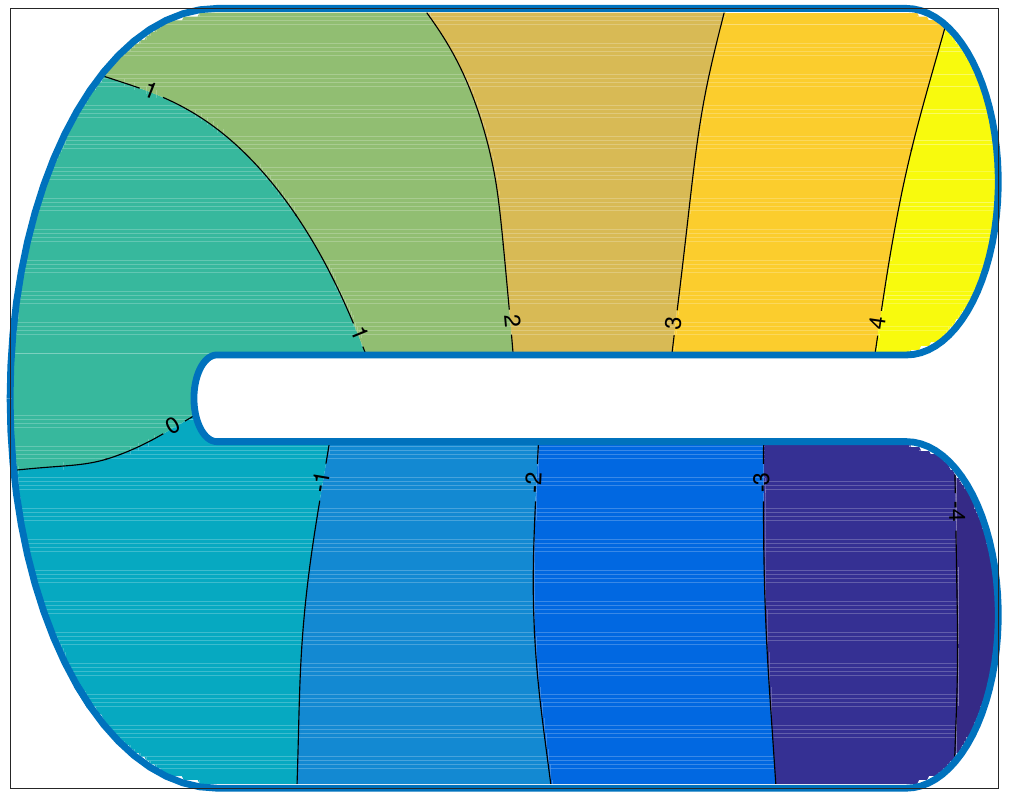} & \includegraphics[height=2.5cm]{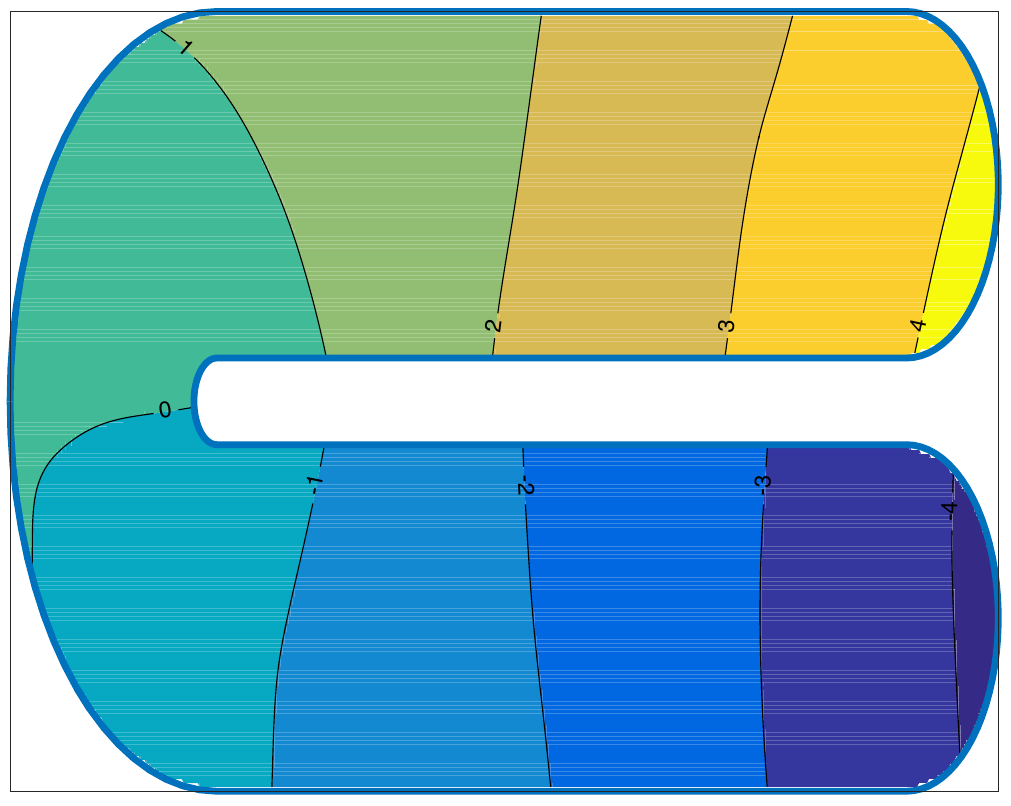}
			& \includegraphics[height=2.5cm]{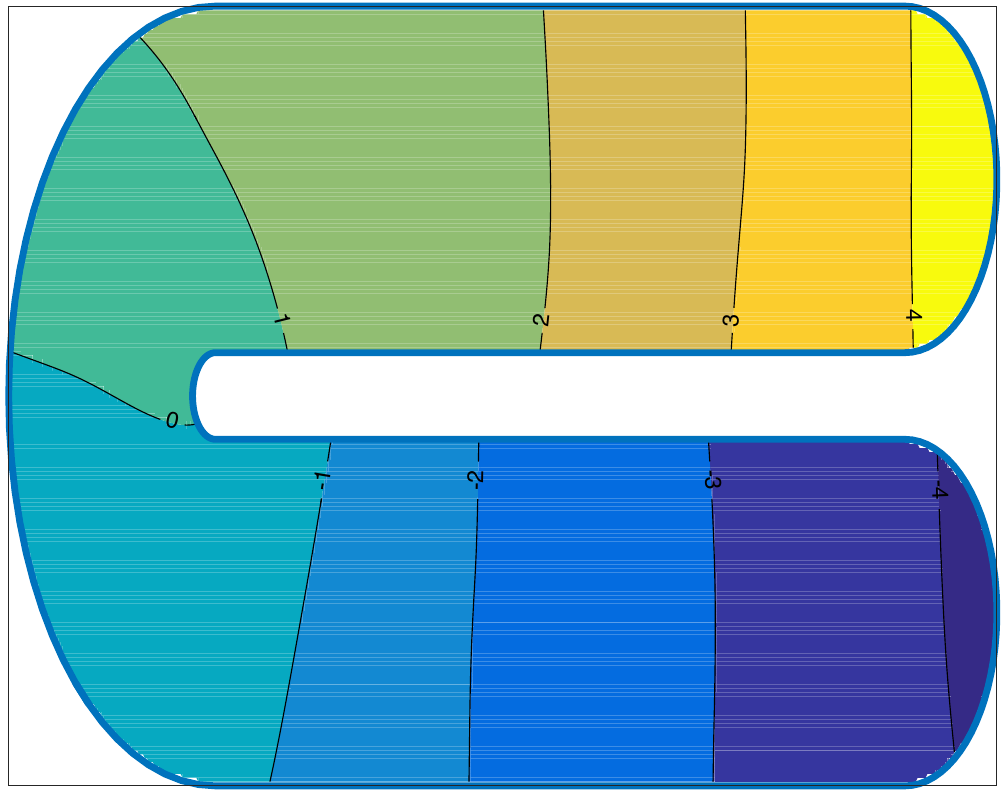} \\
			(e) FEM & (f) BPST ($\triangle_1$) & (g) BPST ($\triangle_2$) & (h) BPST ($\triangle_3$) \\
		\end{tabular}
	\end{center}
	\caption{Contour maps for the true function and its estimators ($\rho=0.7$).}
	\label{FIG:eg1_3}
\end{figure}

\vskip 0.10in \noindent \textbf{B.2. A new simulation example}

In this example, we consider a rectangular domain, $[0,1]^2$, where there is no irregular shape or complex boundaries problem. In this case, classical methods for spatial data analysis, such as KRIG and TPS, will not encounter any difficulty. We obtain the true signal and noisy observation for each coordinate pair lying on a $101\times 101$-grid over $[0,1]^2$ using the following model:
\[
Y=\mathbf{Z}^{\T}\bs{\beta}+g(X_{1},X_{2})+\epsilon,
\]
where $\bs{\beta}=(-1,1)^{\T}$ and $g(x_1,x_2)=10\{(x_1-0.5)^2+(x_2-0.5)^2\}$. The random error, $\epsilon$, is generated from an $N(0,\sigma_{\epsilon}^{2})$ distribution with $\sigma_{\epsilon}=0.5$. Similar to Example 1, we simulate $Z_{1}\sim uniform[-1,1]$, and $Z_{2}=\cos[4\pi (\rho(X_{1}^{2}+X_{2}^{2})+(1-\rho)U)]$, where $\rho=0.0~\mathrm{or}~0.7,~U\sim uniform[-1,1]$ and is independent from $(X_{1},X_{2})$ and $Z_{1}$. Next we take $100$ Monte Carlo random samples of size $n=200$ from the $101\times101$ points.

Figure \ref{FIG:eg2_1} (a) and (b) display the true quadratic surface and the contour map, respectively. We use the triangulation in Figure \ref{FIG:eg2_1} (e) and (f), and there are 8 triangles and 9 vertices as well as 18 triangles and 16 vertices, respectively. In addition, the points in Figure \ref{FIG:eg2_1} (d) demonstrate the sampled location points of replicate 100.

\begin{figure}[htbp]
	\begin{center}
		\begin{tabular}{cc}
			\includegraphics[height=1.8in]{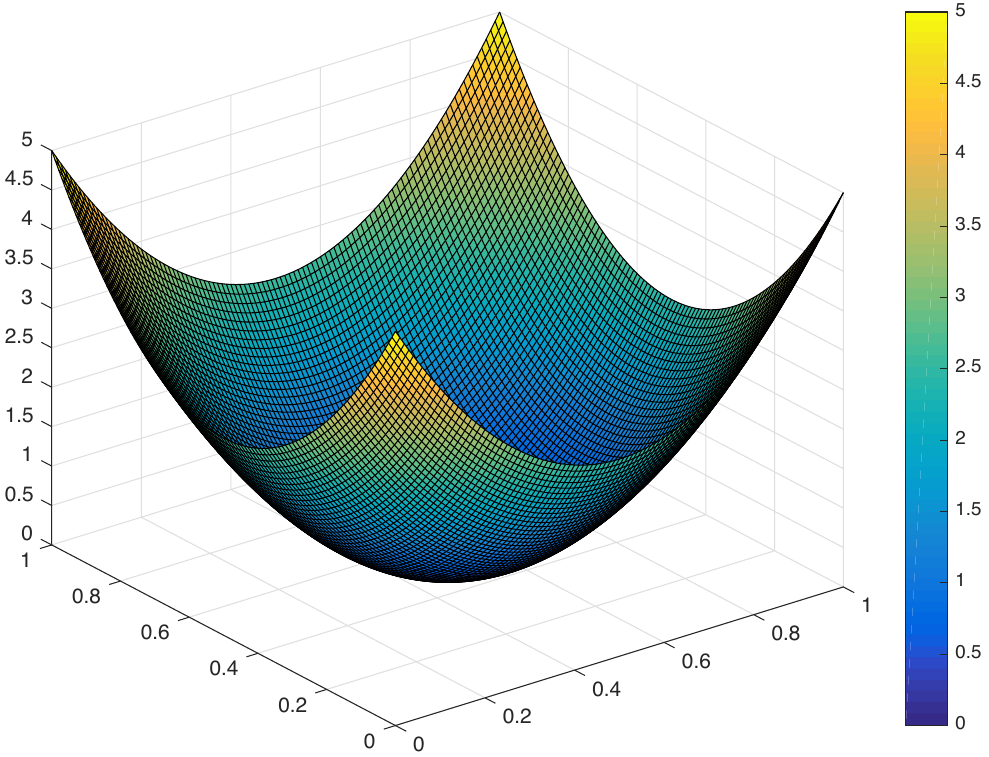} &
			\includegraphics[height=1.8in]{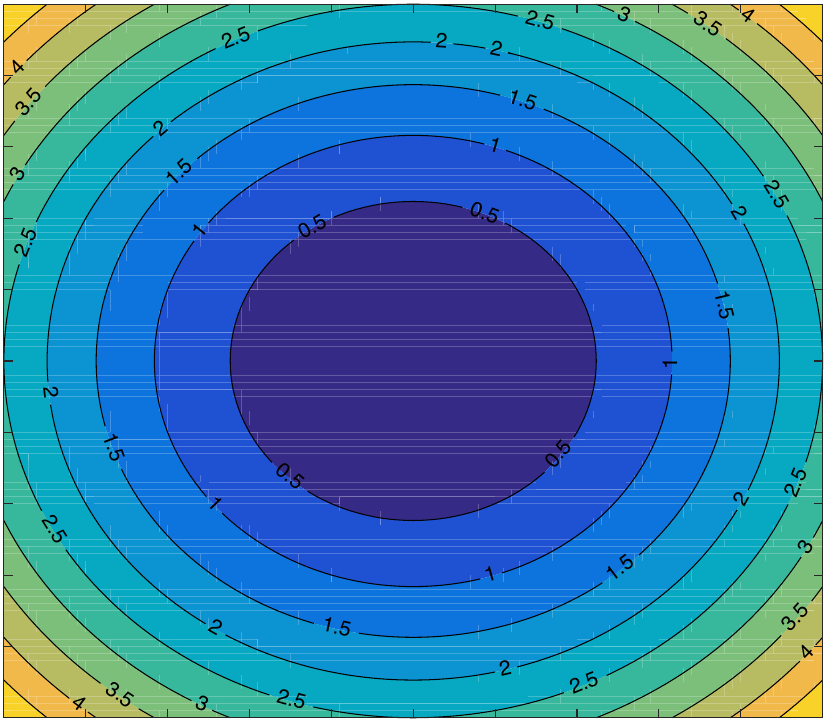} \\[10pt]
			(a) & (b)\\
			\includegraphics[height=1.8in]{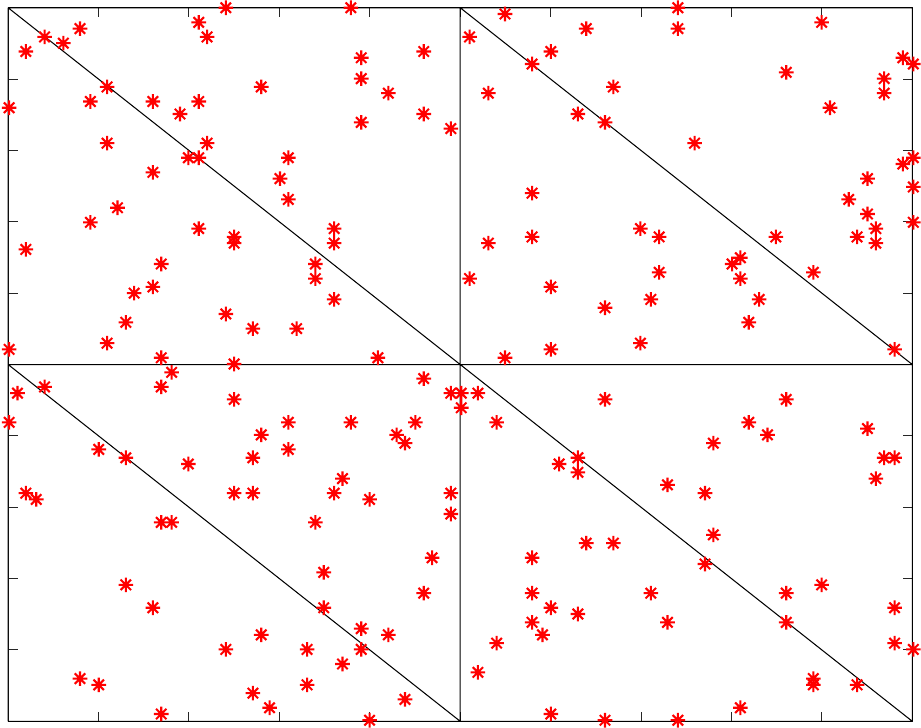} &
			\includegraphics[height=1.8in]{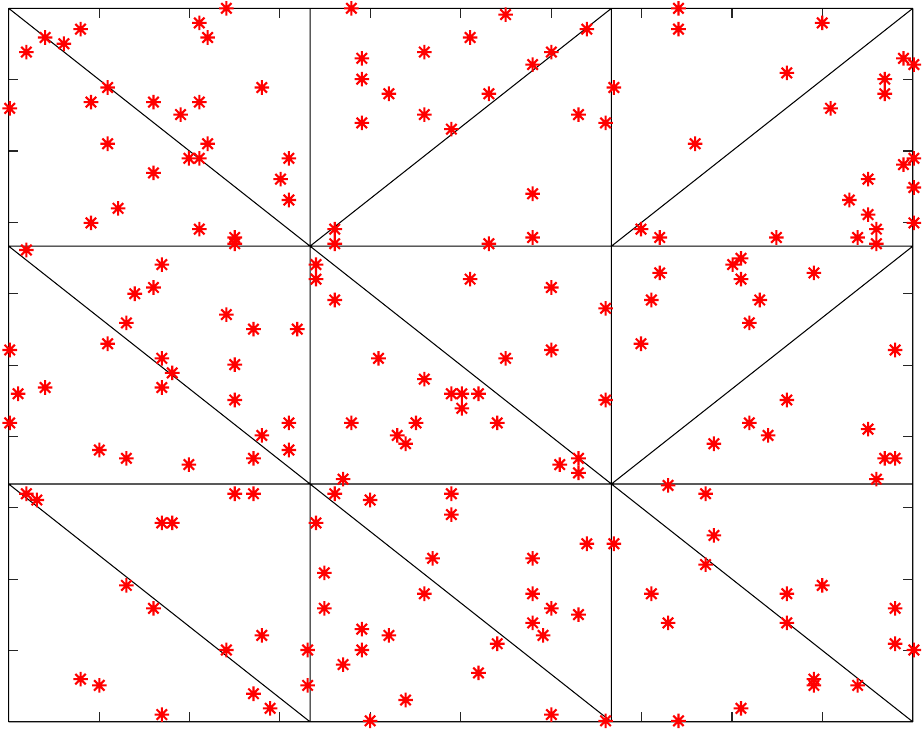}\\
			(c) & (d)\\
		\end{tabular}
	\end{center}
	\caption{(a) true function of $g(\cdot)$; (b) contour map of $g(\cdot)$; (c) first triangulation  ($\triangle_1$); and (d) second triangulation ($\triangle_2$) on the domain.}
	\label{FIG:eg2_1}
\end{figure}

We compare the proposed BPST estimator with estimators from the KRIG, TPS, LFE methods, which are implemented in the same way as in Section 4. To see the accuracy of the estimators, we compute the RMSEs of the coefficient estimators and the estimator of $\sigma_{\epsilon}$. To see the overall prediction accuracy, we make prediction on the $101 \times 101$ grid points on the domain for each replication using different methods, and compare the predicted values with the true function of $g(\cdot)$ at these grid points, and we report the average mean squared prediction errors (MSPE) over all replications.

All the results are summarized in Table \ref{TAB:eg2_1}. As expected, KRIG and TPS work pretty well since the domain is regular in this example. In both scenarios, BPST performs the best. One also notices that, compared with the FEM, our BPST estimator shows much better performance in terms of both estimation and prediction, because BPST provides a more flexible and easier construction of splines with piecewise polynomials of various degrees and smoothness than the FEM method.  As pointed out in \cite{Wood:Bravington:Hedley:08}, the FEM method may require a very fine triangulation in order to reach certain approximation power, however, BPST doesn't need such a strict fineness requirement as it uses piecewise polynomials of higher degree  yielding an larger order approximation power.

Figures \ref{FIG:eg2_2} and \ref{FIG:eg2_3} show the estimated functions via different methods for the last replicate. Compare with the true function in Figure \ref{FIG:eg2_1}, the BPST estimate looks visually better than the other estimates. In addition, from Figures \ref{FIG:eg2_2} and \ref{FIG:eg2_3}, one also sees that the BPST estimators based on $\triangle_1$ and $\triangle_2$ are very similar, which agrees our findings for penalized splines. In summary, Monte Carlo experiment in this study also shows that once the minimum necessary number of triangles has been reached for BPST, further increasing of the number of triangles usually have little effect on the fitting process.

\begin{table}[htbp]
	\caption{\label{TAB:eg2_1}Root mean squared errors of the estimates.}
	\centering
	\begin{tabular}{clcccccc}\hline\hline &Method &$\beta_{1}$ &$\beta_{2}$ &$\sigma_{\varepsilon}$ &$g(\cdot)$\\ \hline
		\multirow{5}{*}{0.0} &KRIG &0.0640 &0.0557 &0.0369 &0.1797\\
		&TPS &0.0647 &0.0551 &0.0286 &0.1640\\
		&LFE &0.0772 &0.0604 &0.0669 &0.2978\\
		&BPST($\triangle_1$) &0.0642 &0.0546 &0.0266 &0.1495\\
		&BPST($\triangle_2$) &0.0640 &0.0556 &0.0273 &0.1395\\ \hline
		
		\multirow{5}{*}{0.7} &KRIG &0.0647 &0.0530 &0.0365 &0.1800\\
		&TPS &0.0653 &0.0515 &0.0281 &0.1640\\
		&LFE &0.0769 &0.0607 &0.0668 &0.2978\\
		&BPST($\triangle_1$) &0.0645 &0.0513 &0.0263 &0.1497\\
		&BPST($\triangle_2$) &0.0644 &0.0512 &0.0265 &0.1476\\ \hline\hline
	\end{tabular}
\end{table}

\begin{figure}[htbp]
	\begin{center}
		\begin{tabular}{ccc}
			\includegraphics[height=4cm]{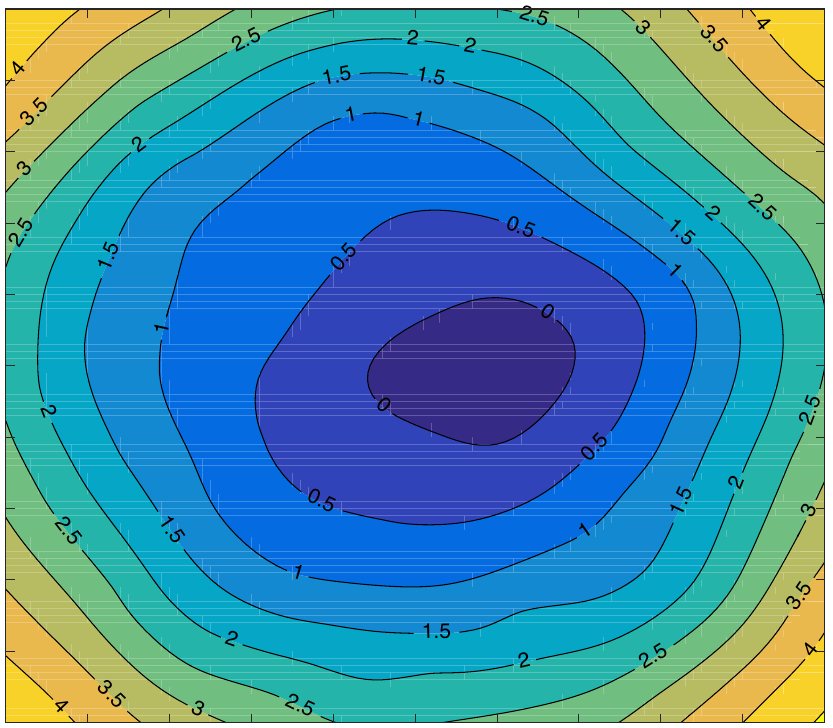}  & 
			\includegraphics[height=4cm]{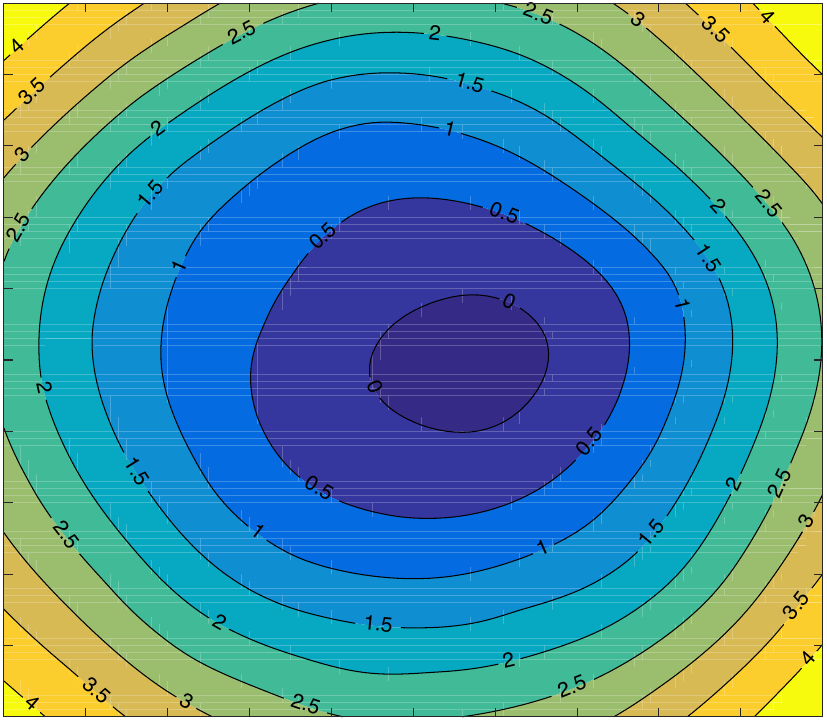}
			& \includegraphics[height=4cm]{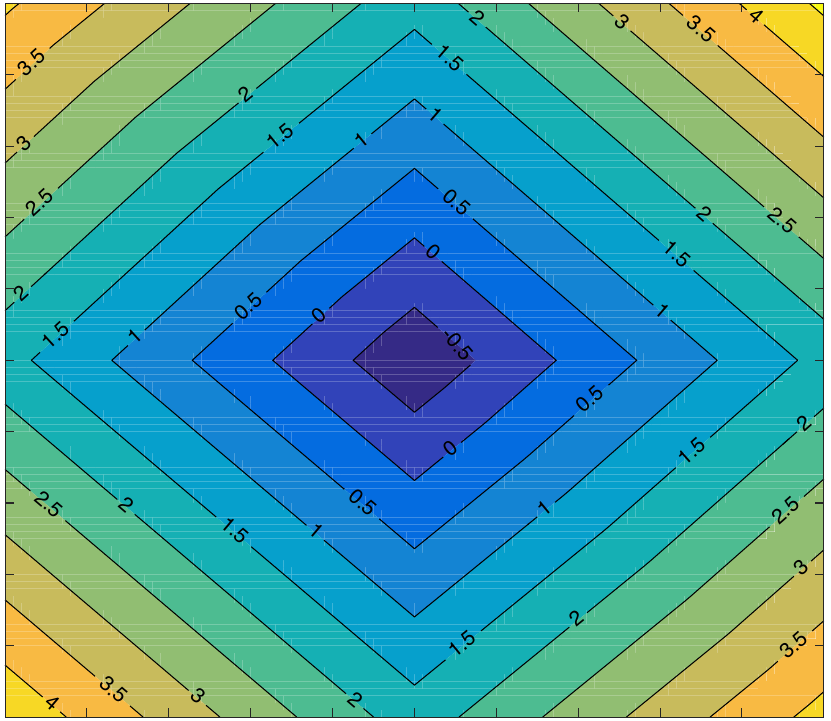} \\
			(a) KRIG & (b) TPS & (c) FEM\\
			\includegraphics[height=4cm]{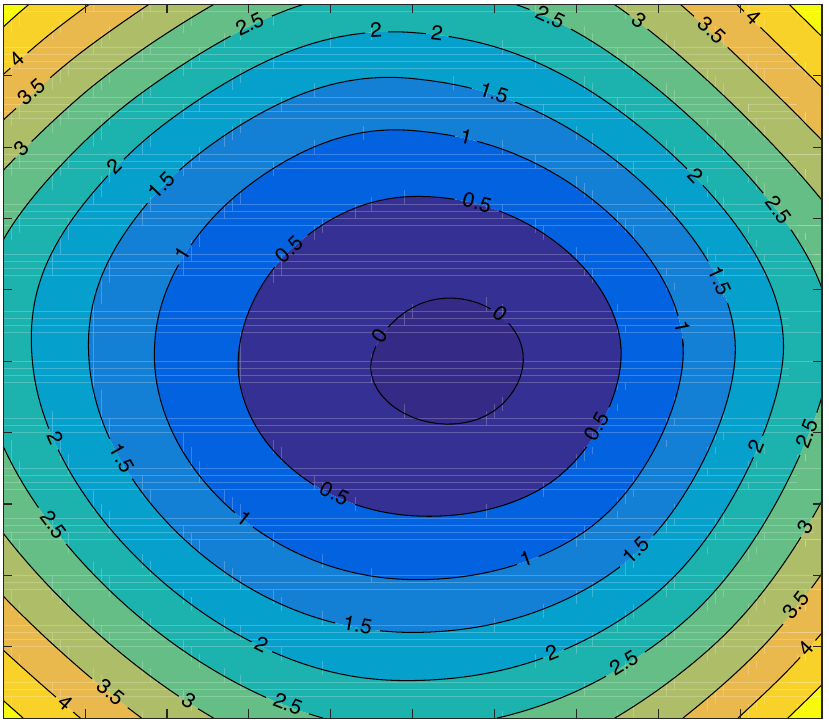} & \includegraphics[height=4cm]{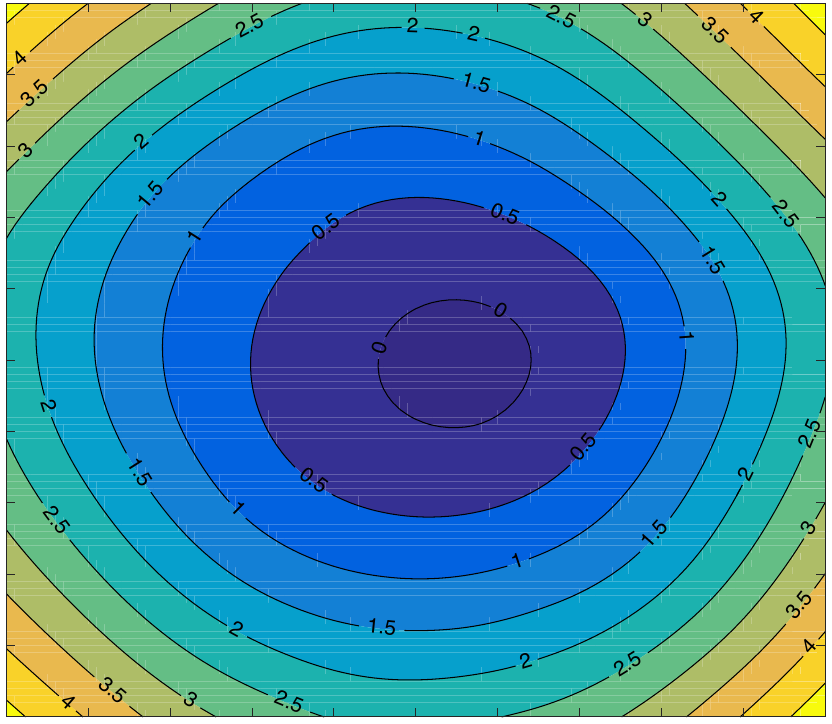}\\
			(d) BPST ($\triangle_1$)  & (e) BPST ($\triangle_2$) \\
		\end{tabular}
	\end{center}
	\caption{Contour maps for the estimators ($\rho=0.0$).}
	\label{FIG:eg2_2}
\end{figure}

\begin{figure}[htbp]
	\begin{center}
		\begin{tabular}{ccc}
			\includegraphics[height=4cm]{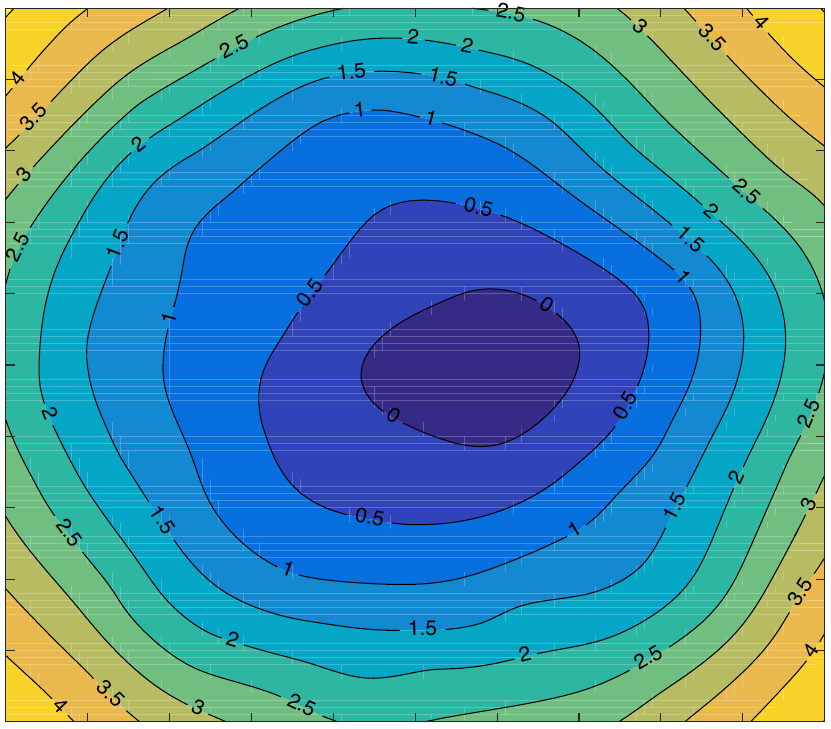} & 
			\includegraphics[height=4cm]{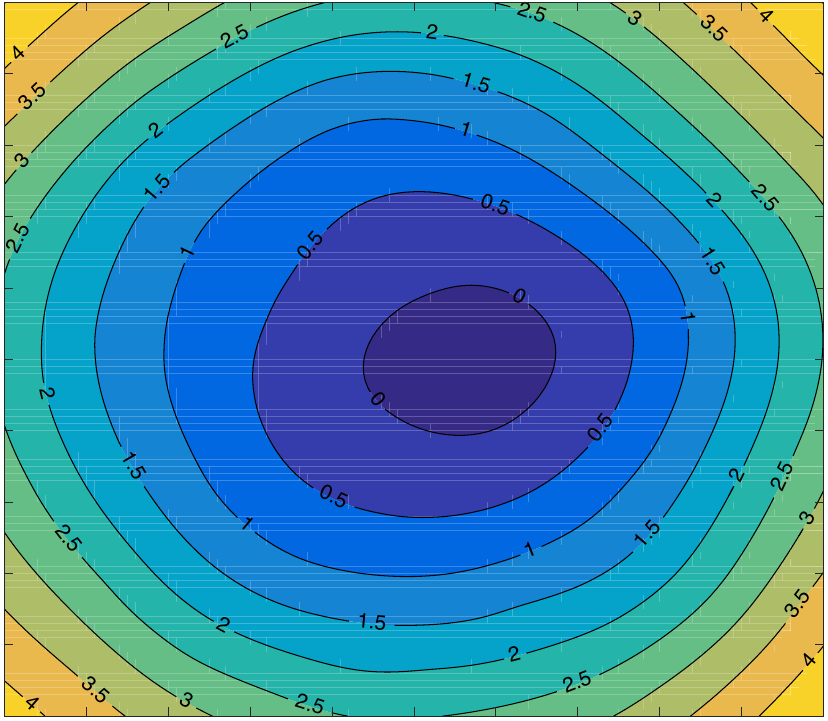}
			& \includegraphics[height=4cm]{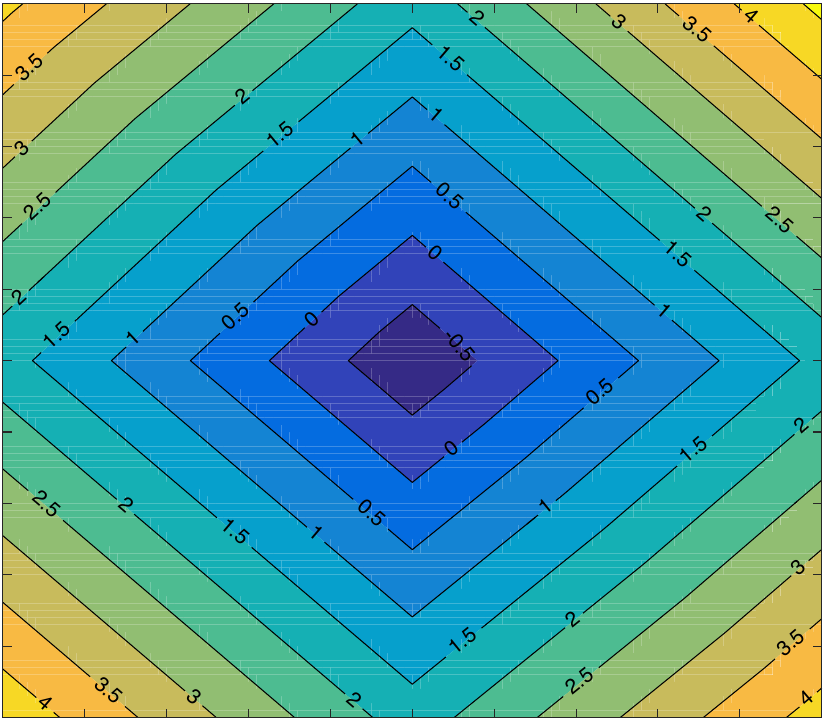} \\
			(a) KRIG & (b) TPS & (c) FEM\\
			\includegraphics[height=4cm]{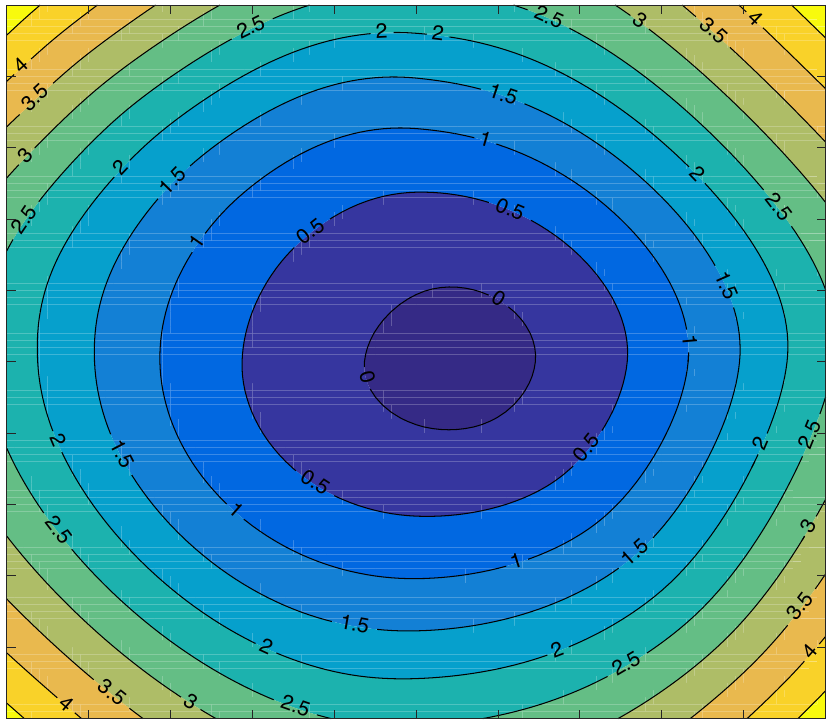} & \includegraphics[height=4cm]{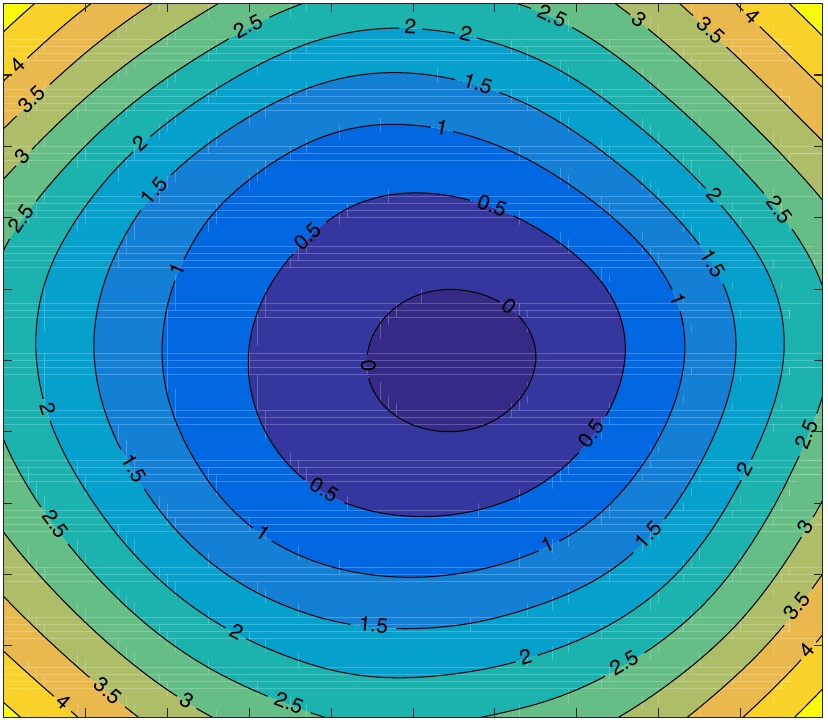}\\
			(d) BPST ($\triangle_1$)  & (e) BPST ($\triangle_2$) \\
		\end{tabular}
	\end{center}
	\caption{Contour maps for the estimators  ($\rho=0.7$).}
	\label{FIG:eg2_3}
\end{figure}


Table \ref{TAB:eg2_2} lists the accuracy results of the standard error formula in (\ref{DEF:Sigma_n}) for $\hat{\beta}_{1}$ and $\hat{\beta}_{2}$ using BPST with triangulation $\triangle_1$. From Table \ref{TAB:eg2_2}, one sees that the estimated standard errors based on sample size $n=200$ are very accurate.

\begin{table}[htbp]
	\caption{\label{TAB:eg2_2}
		Standard error estimates of the BPST coefficients.}
	\centering
	\begin{tabular}{cccccc}\hline\hline
		~~$\rho$ ~~ &~~~~Parameter ~~~~ &~~~~$\mathrm{SE}_{\mathrm{mc}}$ ~~~~ &~~~~$\mathrm{SE}_{\mathrm{mean}}$ ~~~~ &~~~~$\mathrm{SE}_{\mathrm{median}}$ ~~~~ &~~~~$\mathrm{SE}_{\mathrm{mad}}~~$\\ \hline
		
		\multirow{2}{*}{0.0} &$\beta_{1}$ &0.0643 &0.0622 &0.0621 &0.0032\\
		&$\beta_{2}$ &0.0546 &0.0517 &0.0516 &0.0028\\ \hline
		
		\multirow{2}{*}{0.7} &$\beta_{1}$ &0.0645 &0.0621 &0.0622 &0.0030\\
		&$\beta_{2}$ &0.0515 &0.0519 &0.0518 &0.0026\\ \hline\hline
	\end{tabular}
\end{table}

\setcounter{chapter}{10} \renewcommand{\thetheorem}{C.\arabic{theorem}}
\renewcommand{\theproposition}{C.\arabic{proposition}}
\renewcommand{\thelemma}{C.\arabic{lemma}}
\renewcommand{\thecorollary}{C.\arabic{corollary}}
\renewcommand{\theequation}{C.\arabic{equation}} \renewcommand{\thesubsection}{C.\arabic{subsection}}
\renewcommand{\thetable}{{C.\arabic{table}}} \setcounter{table}{0}
\renewcommand{\thefigure}{C.\arabic{figure}} \setcounter{figure}{0}
\setcounter{equation}{0} \setcounter{lemma}{0} \setcounter{proposition}{0}
\setcounter{theorem}{0} \setcounter{subsection}{0} \setcounter{corollary}{0}
\vskip .05in \noindent \textbf{C. Residual Plots from Mercury Concentration Studies}

In this section, we provide some diagnosis plots of the residuals.  Figure \ref{FIG:MC_fitted_res} provides the residuals vs fits plots for five different methods. From Figure \ref{FIG:MC_fitted_res}, one sees that the residuals ``bounce randomly" around the zero line, and no residual ``stands out" from the basic random pattern of residuals. Figure \ref{FIG:MC_res} further demonstrates the residual scatter plot using five different methods. As seen in Figure  \ref{FIG:MC_res}, the absolute values of the residuals are relatively higher in the middle of the Piscataqua river for KRIG and TPS compared to that of the BPST. Due to the small sample size and the complex terrain, all methods have some difficulty in the estimation at the confluence of the Salmon Falls River and Cocheco River. According to \cite{steve2009distribution}, the accumulation of mercury in this area is complex and includes aspects of transport from urban point sources, atmospheric deposition from local and distant sources, prevailing currents, equilibrium processes between overlying water and the quality of sediments. Further research is warranted.

\begin{figure}[htbp]
	\begin{center}
		\begin{tabular}{cc}
			\includegraphics[height=5cm]{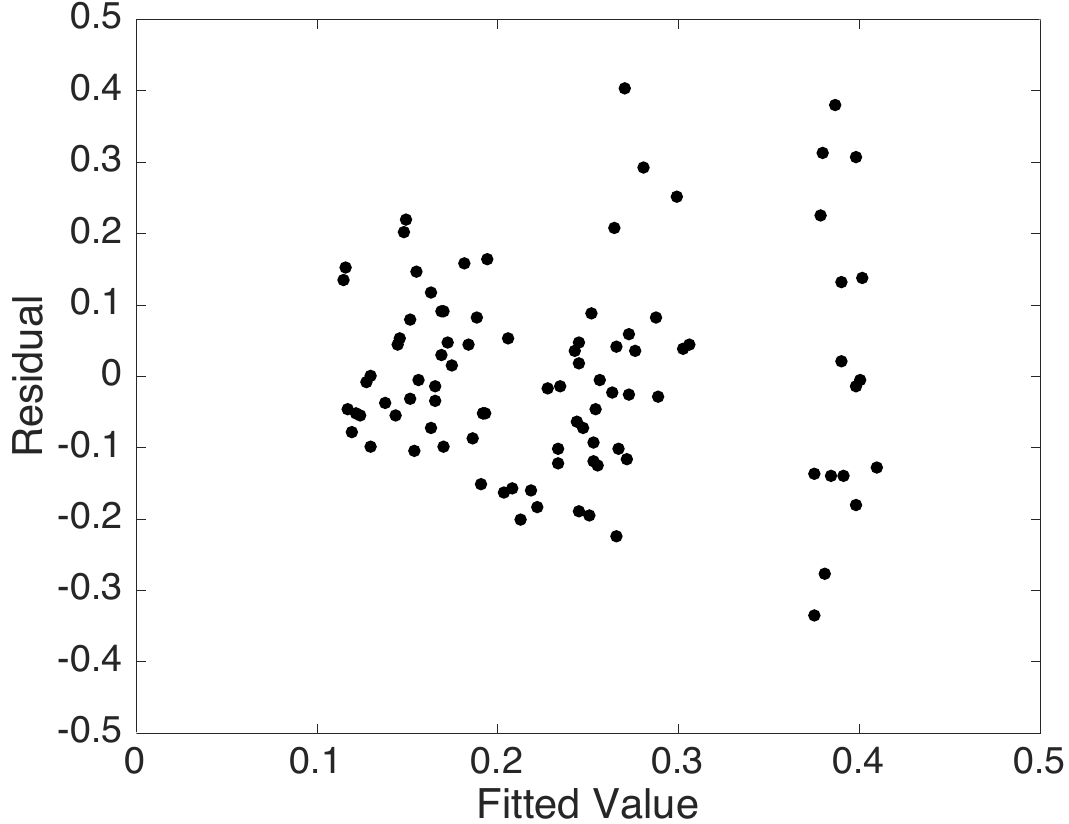} &
			\includegraphics[height=5cm]{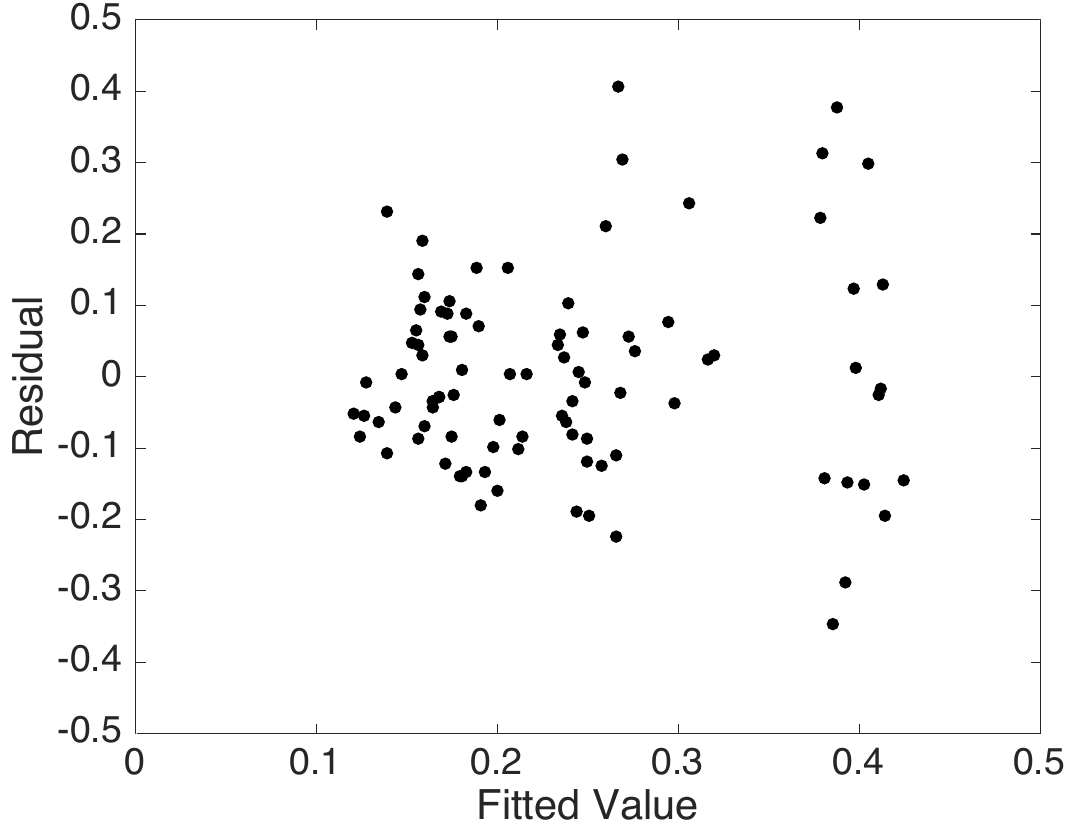}  \\
			(a) KRIG & (b) TPS \\[3pt]
			\includegraphics[height=5cm]{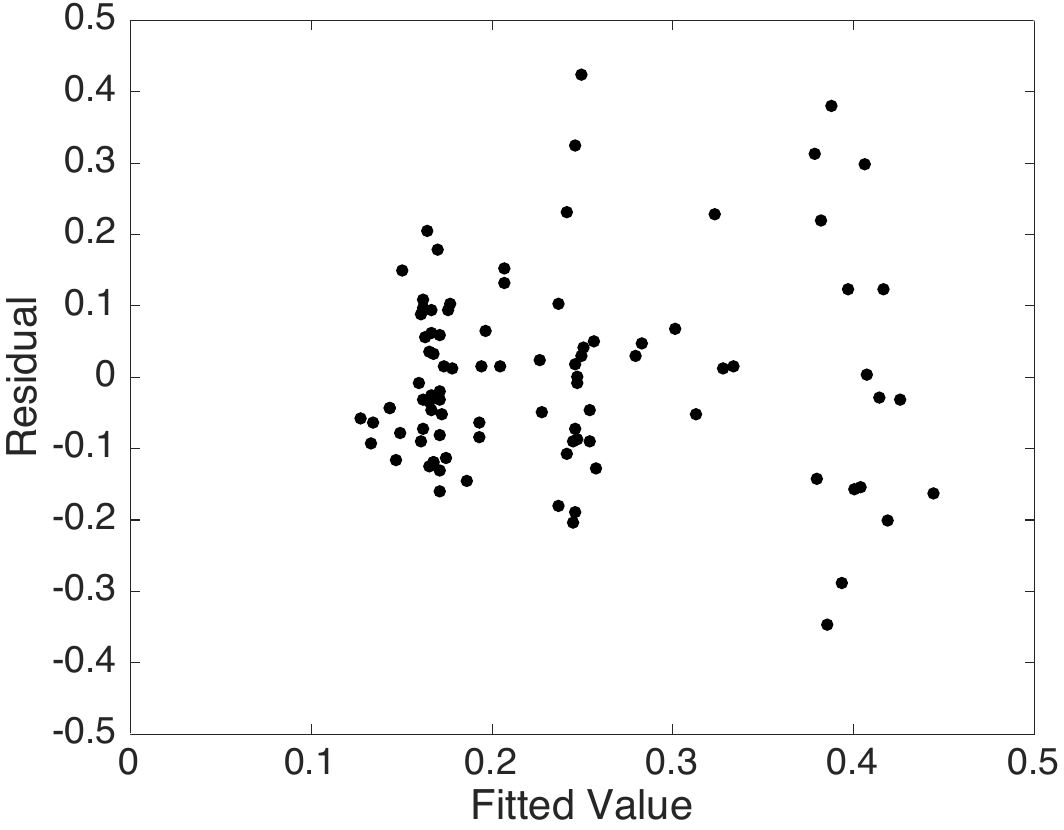} &
			\includegraphics[height=5cm]{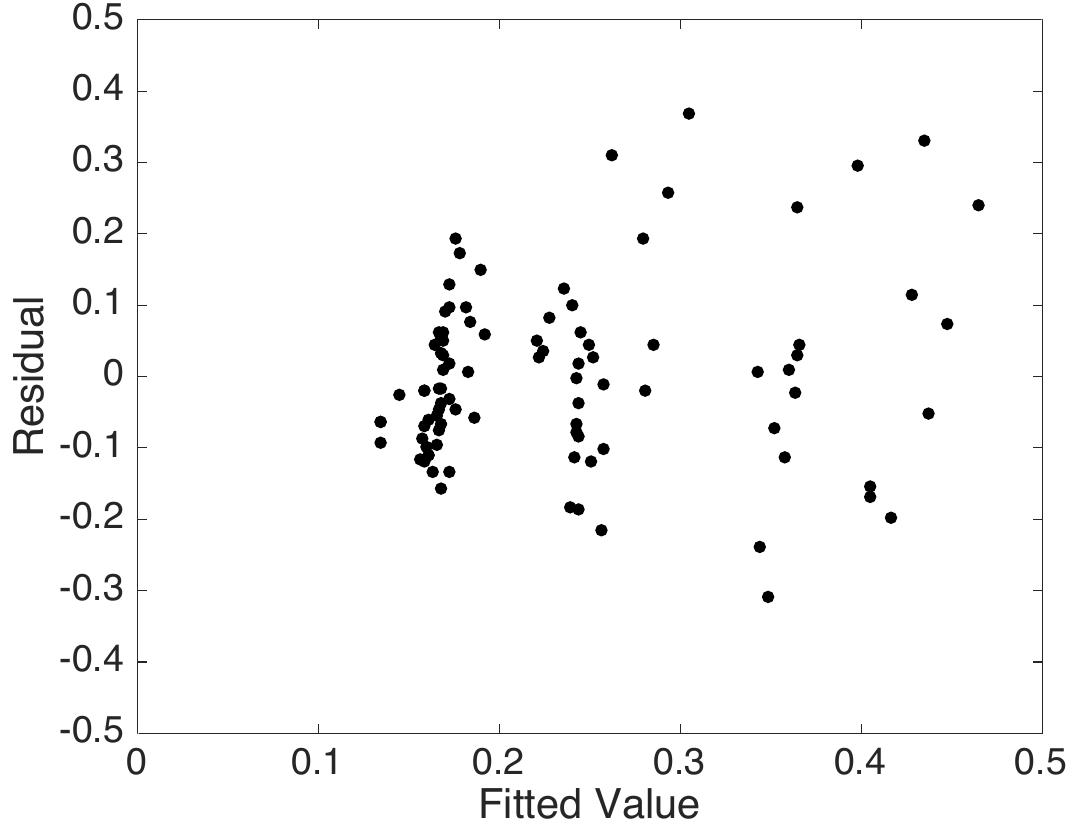} \\
			(c) GLTPS & (d) FEM\\[3pt]
			\includegraphics[height=5cm]{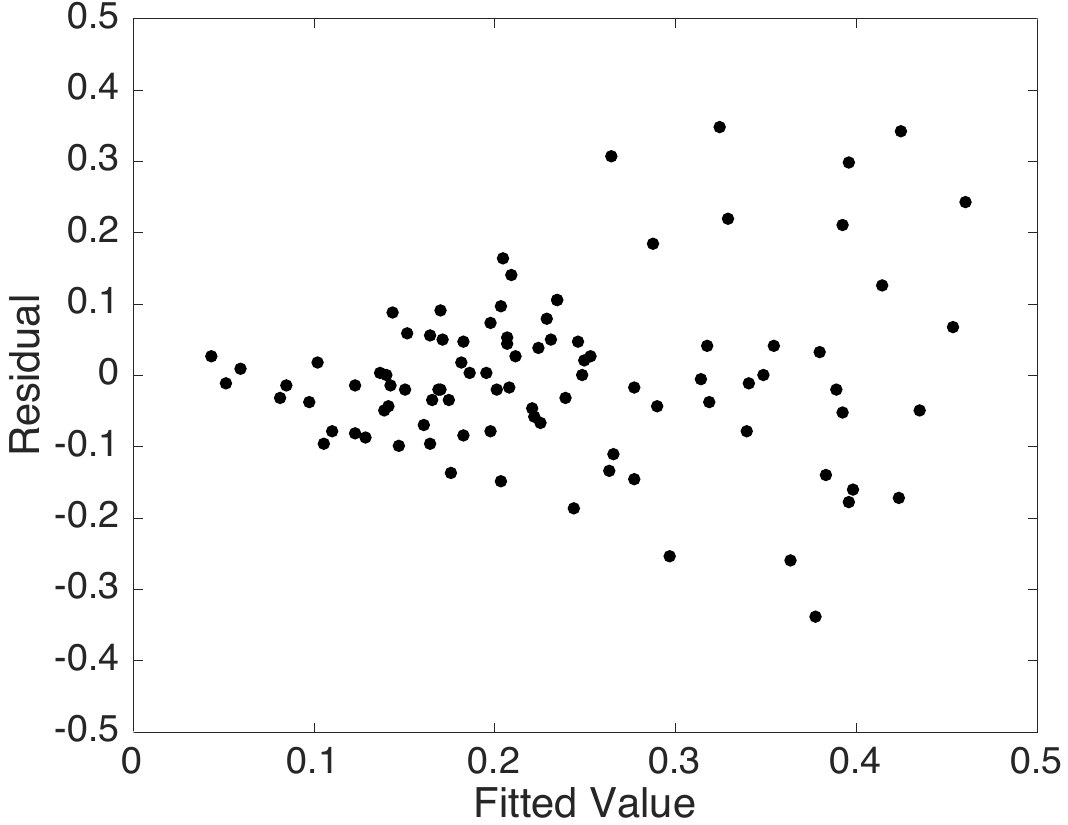}
			& \\
			(e) BPST &
		\end{tabular}
	\end{center} \vskip -.2in
	\caption{Plots of the residuals vs fitted values of mercury concentrations.}
	\label{FIG:MC_fitted_res}
\end{figure}

\begin{figure}[htbp]
	\begin{center}
		\begin{tabular}{cc}
			\includegraphics[height=5.75cm,width=5.75cm]{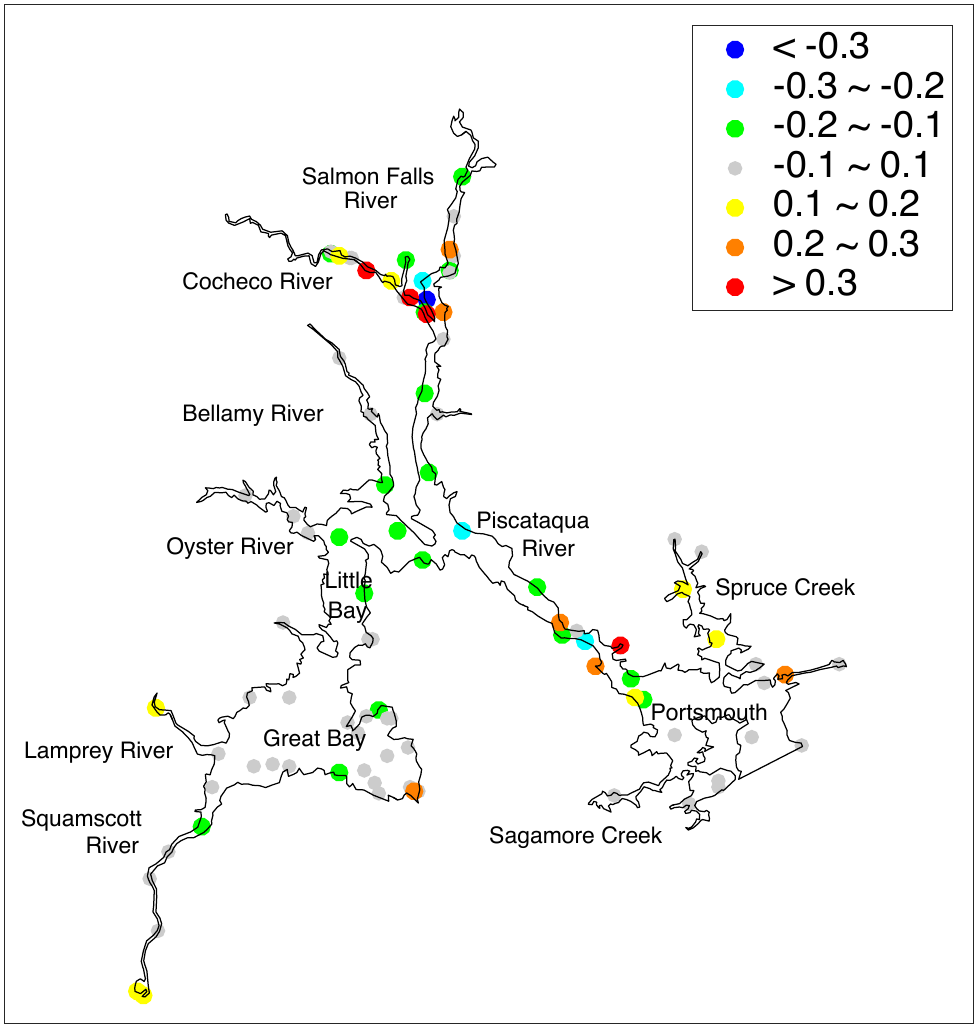} &
			\includegraphics[height=5.75cm,width=5.75cm]{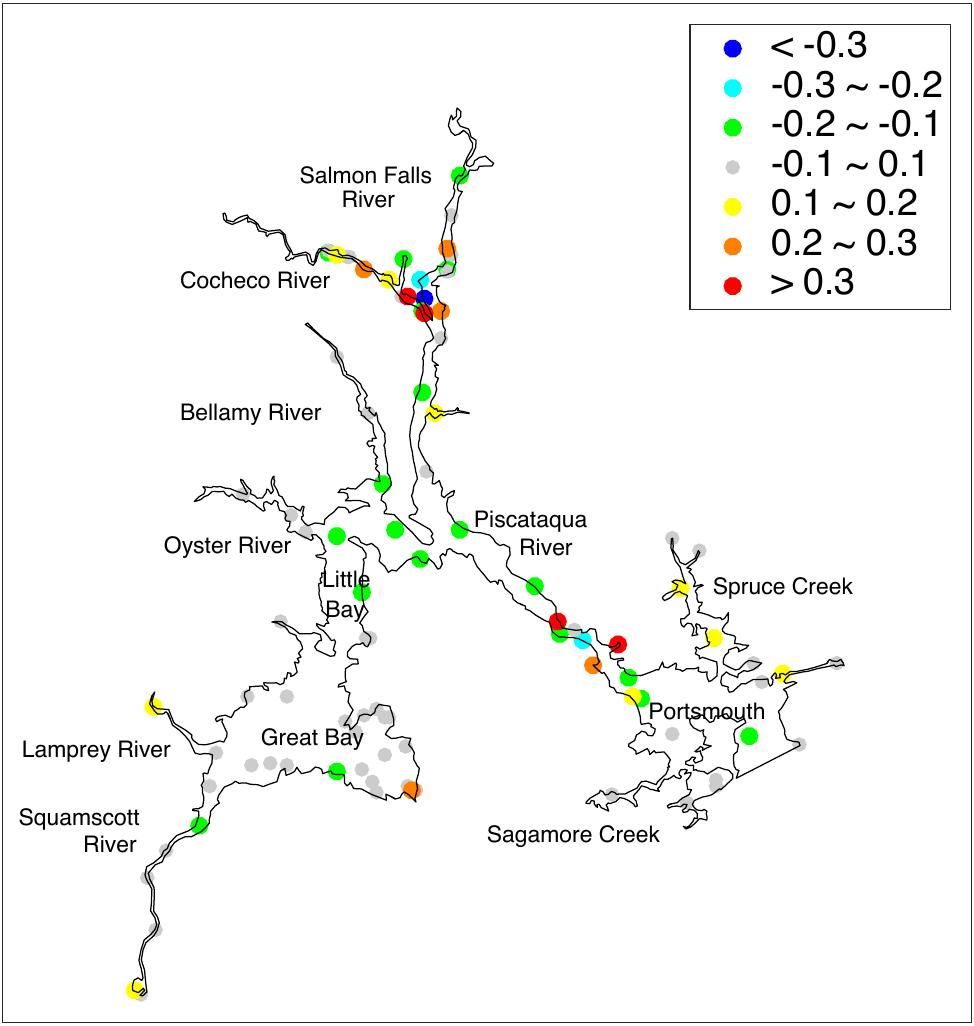}  \\
			(a) KRIG & (b) TPS \\[3pt]
			\includegraphics[height=5.75cm,width=5.75cm]{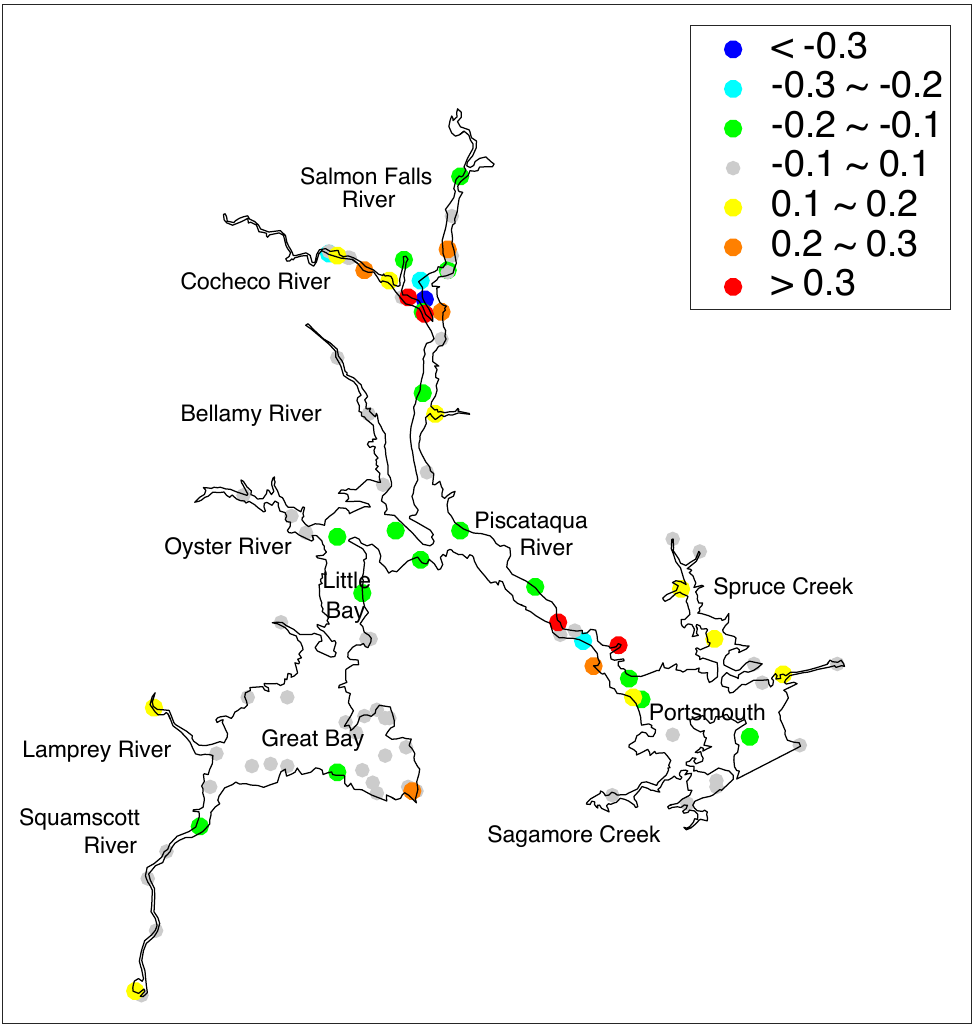} &
			\includegraphics[height=5.75cm,width=5.75cm]{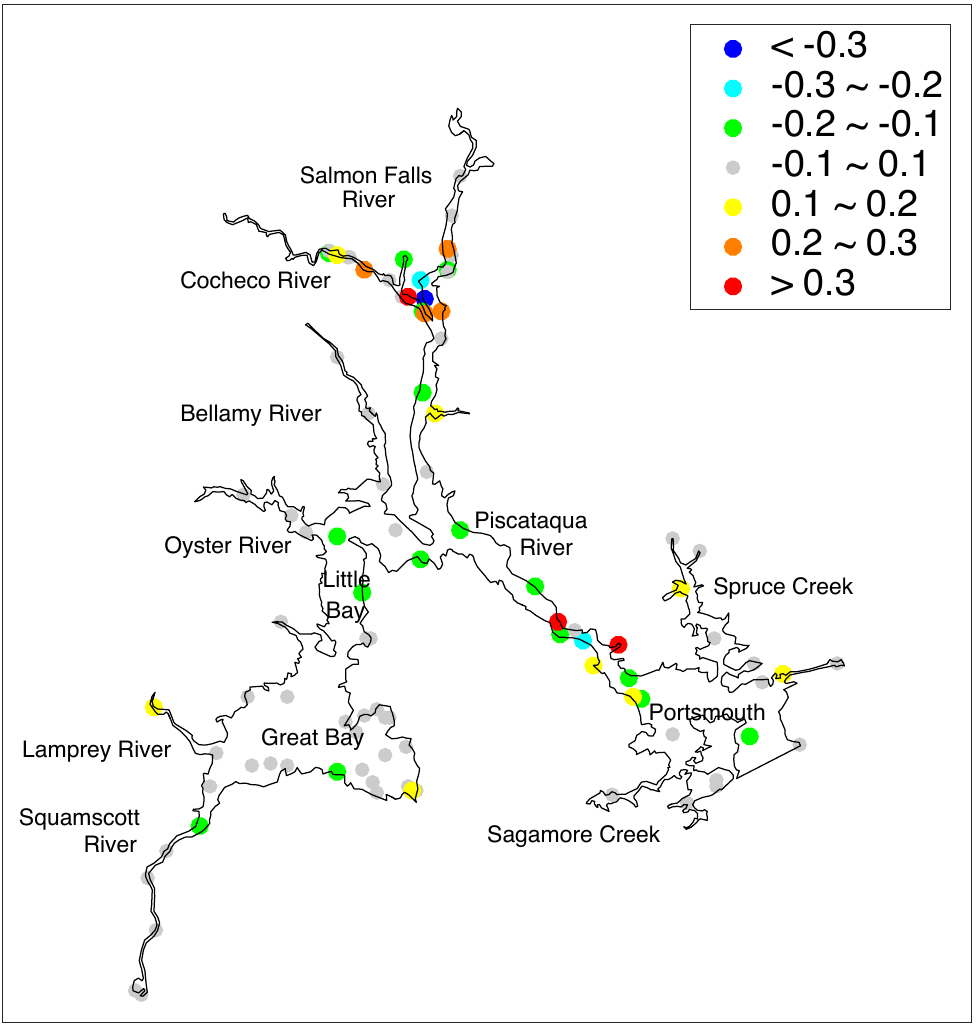} \\
			(c) GLTPS & (d) FEM\\[3pt]
			\includegraphics[height=5.75cm,width=5.75cm]{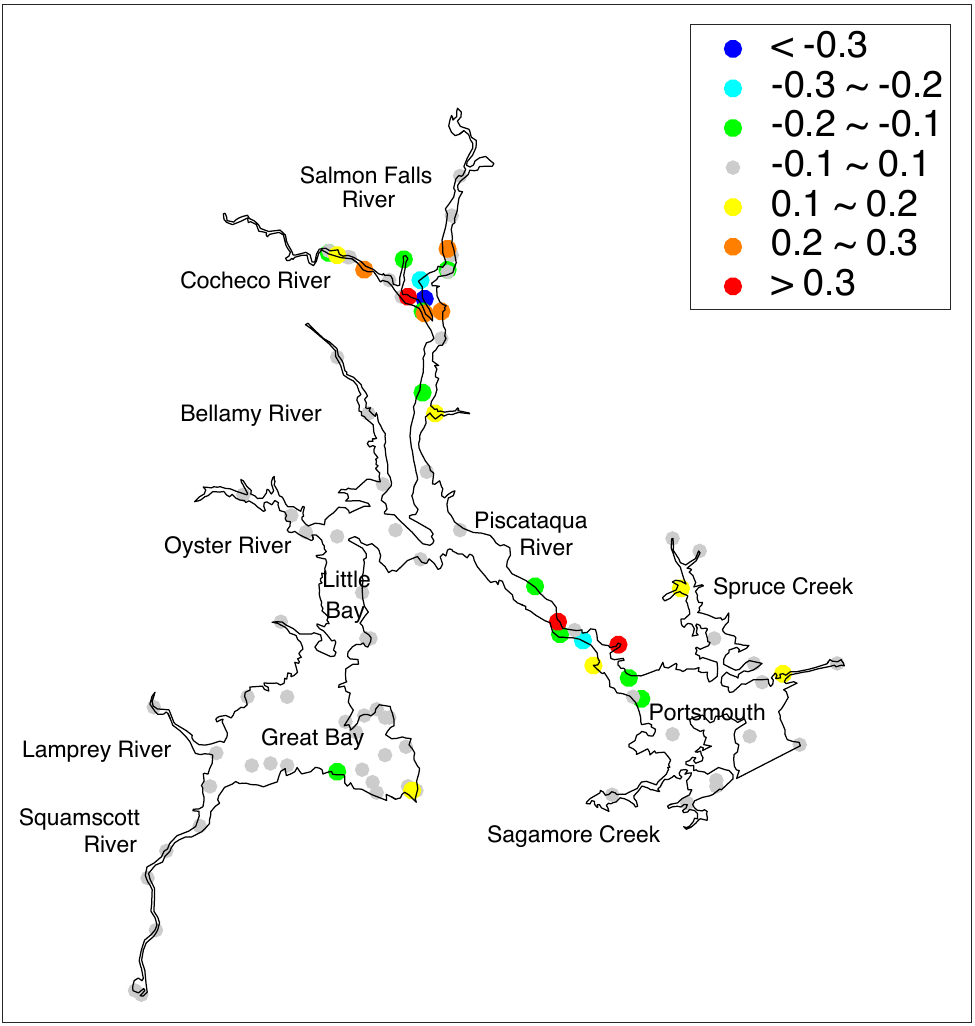}
			& \includegraphics[height=5.75cm,width=5.75cm]{MC_map.pdf}\\
			(e) BPST & (f) Observed Data
		\end{tabular}
	\end{center} \vskip -.2in
	\caption{Residual maps of mercury concentrations over the estuaries in New Hampshire.}
	\label{FIG:MC_res}
\end{figure}

\setcounter{chapter}{11} \renewcommand{\thetheorem}{D.\arabic{theorem}}
\renewcommand{\theproposition}{D.\arabic{proposition}}
\renewcommand{\thelemma}{D.\arabic{lemma}}
\renewcommand{\thecorollary}{D.\arabic{corollary}}
\renewcommand{\theequation}{D.\arabic{equation}} \renewcommand{\thesubsection}{D.\arabic{subsection}}
\renewcommand{\thetable}{{D.\arabic{table}}} \setcounter{table}{0}
\renewcommand{\thefigure}{D.\arabic{figure}} \setcounter{figure}{0}
\setcounter{equation}{0} \setcounter{lemma}{0} \setcounter{proposition}{0}
\setcounter{theorem}{0} \setcounter{subsection}{0} \setcounter{corollary}{0}
\vskip .05in \noindent \textbf{D. Technical Lemmas}

In the following, we use $c$, $C$, $c_1$, $c_2$, $C_1$, $C_2$, etc. as generic constants, which may be different even in the same line. For functions $f_{1}$ and $f_{2}$ on $\Omega \times \mathbb{R}^{p}$, we define the empirical inner product and norm as
$\left\langle f_{1},f_{2}\right\rangle_{n}=\frac{1}{n}\sum_{i=1}^{n} f_{1}(\mathbf{X}_{i},\mathbf{Z}_{i})f_{2}(\mathbf{X}_{i},\mathbf{Z}_{i})$ and $\left\|
f_{1}\right\| _{n}^{2}=\left\langle f_{1},f_{1}\right\rangle_{n}$. If $f_{1}$ and $f_{2}$ are $L^2$-integrable, we define the theoretical inner
product and theoretical $L^2$ norm as $\left\langle f_{1},f_{2}\right\rangle_{L^2} =E\left\{ f_{1}(\mathbf{X}_{i},\mathbf{Z}_{i})f_{2}(\mathbf{X}_{i},\mathbf{Z}_{i})\right\}$ and $\left\|
f_{1}\right\| _{L^2}=\left\langle f_{1},f_{1}\right\rangle_{L^2}$. Furthermore, let $\left\| \cdot\right\|
_{\mathcal{E}_{\upsilon }}$ be the norm introduced by the inner product $\left\langle \cdot, \cdot\right\rangle _{\mathcal{E}_{\upsilon
}}$, where, for $g_{1}$ and $g_{2}$ on $\Omega$,
\[
\left\langle g_{1},g_{2}\right\rangle_{\mathcal{E}_{\upsilon}} = \int_{\Omega} \sum_{i+j=\upsilon} \binom{\upsilon}{i} (D_{x_1}^iD_{x_2}^jg_{1}) (D_{x_1}^iD_{x_2}^jg_{2})dx_1dx_2.
\]
\begin{proof}[\textbf{Proof of Lemma 1}]
	By \eqref{QR4H}, we have $\mathbf{H}^{\T}=\mathbf{Q}_1 \mathbf{R}_{1}$ since $\mathbf{R}_2=\mathbf{0}$. That is,
	$\mathbf{H}= \mathbf{R}_{1}^{\T} \mathbf{Q}_1^{\T}$. Thus,
	\begin{align*}
	\mathbf{H} \bs{\gamma}=& \mathbf{H} \mathbf{Q}_2\bs{\theta}= \mathbf{R}_{1}^{\T} \mathbf{Q}_1^{\T} \mathbf{Q}_2\bs{\theta} = \mathbf{0}
	\end{align*}
	since $\mathbf{Q}_1^{\T} \mathbf{Q}_2=\mathbf{0}$. On the other hand, if
	\[
	\mathbf{0}=\mathbf{H} \bs{\gamma} = \mathbf{R}_{1}^{\T} \mathbf{Q}_1^{\T}\bs{\gamma},
	\]
	we have $\mathbf{Q}_1^{\T}\bs{\gamma}=0$ since $\mathbf{R}_1$ is invertible. Thus, $\bs{\gamma}$ is in the perpendicular subspace of the space spanned by the columns of $\mathbf{Q}_1$. That is, $\bs{\gamma}$ is in the space spanned by the columns of $\mathbf{Q}_2$. Thus, there exists a vector $\bs{\theta}$ such that $\bs{\gamma} = \mathbf{Q}_2\bs{\theta}$. These complete the proof.
\end{proof}

\begin{lemma}{[\cite{Lai:Schumaker:07}]}
	\label{LEM:normequity}
	Let $\{B_{\xi }\}_{\xi \in \mathcal{K}}$ be the Bernstein polynomial basis for spline space
	$\mathbb{S}$ with smoothness $r$, where
	$\mathcal{K}$ stands for an index set.  Then there exist positive
	constants $c$, $C$ depending on the smoothness $r$ and the shape parameter $\delta$ in Condition (C6) such that
	\[
	c|\triangle |^{2}\sum_{\xi \in \mathcal{K}}|\gamma_{\xi }|^{2}\le
	\Vert \sum_{\xi \in \mathcal{K}}\gamma_{\xi }B_{\xi }\Vert _{L^2}^{2}\le C|\triangle |^{2}\sum_{\xi \in \mathcal{K}}|\gamma_{\xi}|^{2}
	\label{EQ:normequity}
	\]
	for all $\gamma_{\xi },\xi \in \mathcal{K}$.
\end{lemma}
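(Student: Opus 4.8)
The plan is to reduce the global norm equivalence over $\triangle$ to a single-triangle estimate, and then to reduce the single-triangle estimate to a fixed reference triangle where everything is finite-dimensional. First I would note that each Bernstein basis polynomial $B_\xi$ indexed by $\mathcal{K}$ is supported on exactly one triangle $\tau\in\triangle$, and that distinct triangles overlap only along edges of measure zero. Consequently both sides of the claimed inequality split exactly as sums over triangles: writing $s=\sum_{\xi\in\mathcal{K}}\gamma_\xi B_\xi$ and letting $\boldsymbol{\gamma}_\tau=\{\gamma_{ijk}^\tau:i+j+k=d\}$ denote the coefficients attached to $\tau$, one has $\|s\|_{L^2}^2=\sum_{\tau\in\triangle}\|s\|_{L^2(\tau)}^2$ and $\sum_{\xi\in\mathcal{K}}|\gamma_\xi|^2=\sum_{\tau\in\triangle}\sum_{i+j+k=d}|\gamma_{ijk}^\tau|^2$. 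Thus it suffices to prove the per-triangle equivalence $c|\tau|^2\sum_{i+j+k=d}|\gamma_{ijk}^\tau|^2\le\|s\|_{L^2(\tau)}^2\le C|\tau|^2\sum_{i+j+k=d}|\gamma_{ijk}^\tau|^2$ with $c,C$ independent of $\tau$.

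Second, I would establish this single-triangle estimate by an affine change of variables to a fixed standard triangle $\hat\tau$. Let $\phi:\hat\tau\to\tau$ be the affine bijection mapping the vertices of $\hat\tau$ to those of $\tau$. Because barycentric coordinates, and hence the Bernstein polynomials, transform by $B_{ijk}^{\tau,d}\circ\phi=B_{ijk}^{\hat\tau,d}$, we get $(\sum_{i+j+k=d}\gamma_{ijk}^\tau B_{ijk}^{\tau,d})\circ\phi=\sum_{i+j+k=d}\gamma_{ijk}^\tau B_{ijk}^{\hat\tau,d}$, while the change of variables contributes the Jacobian factor $|\det D\phi|=\mathrm{area}(\tau)/\mathrm{area}(\hat\tau)$. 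On the fixed triangle $\hat\tau$ the map $\boldsymbol{\gamma}_\tau\mapsto\sum_{i+j+k=d}\gamma_{ijk}^\tau B_{ijk}^{\hat\tau,d}$ is a linear isomorphism between $\mathbb{R}^{\binom{d+2}{2}}$ and $\mathbb{P}_d(\hat\tau)$, so the Euclidean norm of the coefficients and the $L^2(\hat\tau)$ norm of the polynomial are equivalent with constants $A_1,A_2$ depending only on $d$. Pulling this back through $\phi$ yields $\|s\|_{L^2(\tau)}^2\asymp\mathrm{area}(\tau)\sum_{i+j+k=d}|\gamma_{ijk}^\tau|^2$. Finally, the shape-regularity bound $\delta_\triangle\le\delta$ forces the smallest angle of $\tau$ to stay bounded away from zero, so that $\mathrm{area}(\tau)\asymp|\tau|^2$ with comparison constants depending only on $\delta$; this upgrades the estimate to the desired $|\tau|^2$ form, with $c,C$ depending only on $d$ (equivalently $r$, since $d=3r+2$) and on $\delta$.

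Third, I would reassemble the global bounds. Summing the upper per-triangle bound and using $|\tau|\le|\triangle|$ for every $\tau$ gives the right-hand inequality at once. The left-hand inequality is the delicate point: summing the lower per-triangle bound produces $\sum_{\tau\in\triangle}|\tau|^2\|\boldsymbol{\gamma}_\tau\|^2$, which one can replace by $|\triangle|^2\sum_{\tau\in\triangle}\|\boldsymbol{\gamma}_\tau\|^2$ only after bounding each $|\tau|$ below by a fixed fraction of $|\triangle|$. I therefore expect the main obstacle to be precisely this passage from $|\tau|^2$ to $|\triangle|^2$ in the lower bound, which requires the triangles of $\triangle$ to be of comparable size (quasi-uniformity); under the regular refinement scheme described in Section 2.4 the ratio $\min_{\tau}|\tau|/|\triangle|$ stays bounded below, and the constant $c$ then absorbs this ratio. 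Everything else is a routine finite-dimensional norm-equivalence argument combined with affine scaling.
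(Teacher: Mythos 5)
Your proof is correct, but note that the paper itself never proves this lemma: it is imported as a citation from Lai and Schumaker (2007), so there is no internal argument to compare against. What you have written is precisely the standard argument underlying the cited result: the per-triangle splitting (each Bernstein basis polynomial in the paper's setup is supported on a single triangle, and triangles overlap in sets of measure zero), the affine pull-back to a reference triangle using the affine invariance of barycentric coordinates together with the Jacobian factor $\mathrm{area}(\tau)/\mathrm{area}(\hat\tau)$, the finite-dimensional norm equivalence on the reference triangle with constants depending only on $d=3r+2$, and the comparison $\mathrm{area}(\tau)\asymp|\tau|^{2}$ coming from the shape-regularity bound $|\tau|/\rho_{\tau}\le\delta$. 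All of these steps are sound.

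The issue you isolate in your last paragraph is genuine, and it is a defect of the hypotheses as this paper states them rather than of your argument. The lower bound $c|\triangle|^{2}\sum_{\xi}|\gamma_{\xi}|^{2}\le\Vert s\Vert_{L^2}^{2}$ truly requires the triangles to be of comparable size: take one huge and one tiny triangle, both perfectly shaped, and put all coefficient mass on the tiny one; then $\Vert s\Vert_{L^2}^{2}\asymp|\tau_{\mathrm{tiny}}|^{2}\Vert\boldsymbol{\gamma}\Vert^{2}\ll|\triangle|^{2}\Vert\boldsymbol{\gamma}\Vert^{2}$, so no constant depending only on $r$ and $\delta$ can work. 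The paper's condition $\delta_{\triangle}=\max_{\tau\in\triangle}|\tau|/\rho_{\tau}<\delta$ controls only the smallest angle, not size comparability; the sources it leans on (Lai and Wang (2013), and the corresponding statement in Lai and Schumaker (2007)) assume $\beta$-quasi-uniformity, i.e. $|\triangle|\le\beta\min_{\tau}\rho_{\tau}$, which is exactly what your final summation step needs and which makes the constants depend only on $r$ and $\beta$ (or $\delta$). So your proof establishes the lemma under the hypotheses under which it is actually true; the constant cannot simply ``absorb'' the ratio $\min_{\tau}|\tau|/|\triangle|$ unless that ratio is assumed bounded below, and you are right to flag that passage as the one non-routine point rather than paper over it.
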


With the above stability condition, \cite{Lai:Wang:13} established the following
uniform rate at which the empirical inner product approximates the
theoretical inner product.

\begin{lemma}{[Lemma 2 of the Supplement of \cite{Lai:Wang:13}]}
	\label{LEM:Rnorder}
	Let $g_{1}=\sum_{\xi \in \mathcal{K}}c_{\xi
	}B_{\xi }$, $g_{2}=\sum_{\zeta \in \mathcal{K}}\widetilde{c}_{\zeta
	}B_{\zeta }$ be any spline functions in $\mathbb{S}$. Under Conditions (C4) and (C6),
	\[
	\sup\limits_{g_{1},g_{2}\in \mathbb{S}}\left|
	\frac{\left\langle g_{1},g_{2}\right\rangle_{n}-\left\langle g_{1},g_{2}\right\rangle _{L^2}}{\left\|
		g_{1}\right\| _{L^2}\left\| g_{2}\right\| _{L^2}}\right| =O_{P}\left\{(N\log n)^{1/2}/{n}^{1/2}\right\}.
	\]
\end{lemma}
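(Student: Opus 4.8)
The plan is to collapse the uniform statement over the infinite family $\{g_{1},g_{2}\in\mathbb{S}\}$ into a single deterministic bound on the operator norm of one random matrix. Writing $g_{1}=\mathbf{B}(\cdot)^{\T}\mathbf{c}$ and $g_{2}=\mathbf{B}(\cdot)^{\T}\widetilde{\mathbf{c}}$ with coefficient vectors $\mathbf{c}=(c_{\xi})_{\xi\in\mathcal{K}}$ and $\widetilde{\mathbf{c}}=(\widetilde{c}_{\zeta})_{\zeta\in\mathcal{K}}$, bilinearity of both inner products gives
\[
\langle g_{1},g_{2}\rangle_{n}-\langle g_{1},g_{2}\rangle_{L^{2}}
=\mathbf{c}^{\T}\mathbf{M}\widetilde{\mathbf{c}},
\qquad
M_{\xi\zeta}=\langle B_{\xi},B_{\zeta}\rangle_{n}-\langle B_{\xi},B_{\zeta}\rangle_{L^{2}}.
\]
Hence $|\mathbf{c}^{\T}\mathbf{M}\widetilde{\mathbf{c}}|\le\|\mathbf{M}\|_{\mathrm{op}}\,\|\mathbf{c}\|\,\|\widetilde{\mathbf{c}}\|$, and the lower bound in the norm-equivalence Lemma~\ref{LEM:normequity} yields $\|\mathbf{c}\|\le(c|\triangle|^{2})^{-1/2}\|g_{1}\|_{L^{2}}$ and likewise for $\widetilde{\mathbf{c}}$. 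Dividing, the supremum in the statement is bounded deterministically by $\|\mathbf{M}\|_{\mathrm{op}}/(c|\triangle|^{2})$. Since the $N$ triangles tile a fixed-area domain so that $|\triangle|^{2}\asymp N^{-1}$, it suffices to prove $\|\mathbf{M}\|_{\mathrm{op}}=O_{P}\!\left(|\triangle|\sqrt{\log n/n}\right)$, which then produces the target rate $|\triangle|^{-1}\sqrt{\log n/n}\asymp\sqrt{N\log n}/\sqrt{n}$.

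The structural observation that makes $\|\mathbf{M}\|_{\mathrm{op}}$ tractable is that each Bernstein basis polynomial $B_{\xi}$ is supported on a single triangle, so $B_{\xi}B_{\zeta}\equiv0$ whenever $B_{\xi}$ and $B_{\zeta}$ belong to different triangles (shared edges carry no mass, as they have Lebesgue measure zero and, by the triangulation assumptions, no design point $\mathbf{X}_{i}$). Consequently both Gram matrices, and hence $\mathbf{M}$, are block diagonal with one block $\mathbf{M}_{\tau}$ per triangle, each of the fixed size $\binom{d+2}{2}\times\binom{d+2}{2}$ with $d=3r+2$. Therefore $\|\mathbf{M}\|_{\mathrm{op}}=\max_{\tau\in\triangle}\|\mathbf{M}_{\tau}\|_{\mathrm{op}}$, and because each block has constant dimension, $\|\mathbf{M}_{\tau}\|_{\mathrm{op}}\le C\max_{\xi,\zeta}|(\mathbf{M}_{\tau})_{\xi\zeta}|$. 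It thus remains only to control the largest in absolute value among the $O(N)$ entries $M_{\xi\zeta}$.

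Each such entry is a centered empirical mean $M_{\xi\zeta}=n^{-1}\sum_{i}\{W_{i}-E(W_{i})\}$ with $W_{i}=B_{\xi}(\mathbf{X}_{i})B_{\zeta}(\mathbf{X}_{i})$. The Bernstein polynomials are uniformly bounded and supported on a common triangle of area $\asymp|\triangle|^{2}$, so (with $\mathbf{X}$ having a bounded density) $|W_{i}|=O(1)$ while $\mathrm{Var}(W_{i})\le E(W_{i}^{2})=O(|\triangle|^{2})$. Applying Bernstein's inequality to this i.i.d.\ sum with deviation level $t=C_{0}|\triangle|\sqrt{\log n/n}$, the exponent is of order $-C_{0}^{2}\log n$, since $nt^{2}\asymp|\triangle|^{2}\log n$ dominates the Bernstein denominator $\sigma^{2}+O(1)\cdot t/3=O(|\triangle|^{2})$; this uses $\sqrt{\log n/n}=o(|\triangle|)$, which holds under (C4) because $|\triangle|\asymp N^{-1/2}\gtrsim n^{-1/6}$. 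A union bound over the $O(N)=O(n^{1/3})$ entries gives $P(\max_{\xi,\zeta}|M_{\xi\zeta}|>t)\le O(N)\,n^{-C_{0}^{2}}\to0$ for $C_{0}$ large, so $\|\mathbf{M}\|_{\mathrm{op}}=O_{P}(|\triangle|\sqrt{\log n/n})$, completing the argument.

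The main obstacle is the concentration step: one must verify that the variance proxy scales like $|\triangle|^{2}$ rather than like a constant, since it is precisely this shrinking variance that, after division by the norm-equivalence factor $|\triangle|^{2}$, yields the favorable $|\triangle|^{-1}\asymp\sqrt{N}$ factor instead of an $N$ factor. Checking that (C4) simultaneously guarantees $\sqrt{\log n/n}=o(|\triangle|)$ (so the boundedness term is negligible in Bernstein's denominator) and that the union bound over $N$ triangles is absorbed is the crux; the block-diagonal reduction is what keeps that union bound over only $O(N)$ entries, rather than over a net of the full $K$-dimensional coefficient space.
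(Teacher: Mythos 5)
Your proof is correct, but note first that the paper itself contains no proof of this lemma: it is imported as stated from Lemma 2 of the Supplement of Lai and Wang (2013), so there is no internal argument to compare against. Your write-up is, in substance, the standard argument for such empirical-versus-theoretical inner product lemmas (and essentially the one used in the cited source): use bilinearity to reduce the uniform statement to the random Gram-difference matrix $\mathbf{M}$; use the stability of the Bernstein basis (Lemma \ref{LEM:normequity}) to convert $L^2$ norms into coefficient norms at the cost of a factor $|\triangle|^{-2}$; exploit the single-triangle support of the B-form basis functions to make $\mathbf{M}$ block diagonal with blocks of fixed size, so that only $O(N)$ entries matter; and finish each entry with Bernstein's inequality (variance proxy $O(|\triangle|^{2})$) plus a union bound. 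Your handling of the cross-triangle entries (shared edges have Lebesgue measure zero, and the paper's Section 2.1 assumption puts no design point on an edge) is exactly the observation needed for the block structure, and the rate bookkeeping $|\triangle|^{-1}\sqrt{\log n/n}\lesssim\sqrt{N\log n}/\sqrt{n}$ is right.

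Two points deserve flagging, though neither is a genuine gap. First, your claim $|\triangle|^{2}\asymp N^{-1}$ is not implied by the paper's shape-regularity condition $\delta_{\triangle}<\delta$ alone (it requires quasi-uniformity); however, the only direction you actually use is $N|\triangle|^{2}\ge\sum_{\tau\in\triangle}\mathrm{area}(\tau)=A_{\Omega}$, i.e.\ $|\triangle|^{-1}\lesssim\sqrt{N}$ and hence $|\triangle|\gtrsim N^{-1/2}\gtrsim n^{-1/6}$ under (C4), and this direction holds for any triangulation of a fixed domain, so both places where you invoke it (the final rate conversion and the verification that $\sqrt{\log n/n}=o(|\triangle|)$ dominates the Bernstein denominator) are sound. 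Second, your variance bound $\mathrm{Var}(W_i)=O(|\triangle|^{2})$ requires the density of $\mathbf{X}$ to be bounded above; the paper never states this in (A1)--(A3), (C1)--(C5), but it is already implicitly required for Lemma \ref{LEM:normequity} to hold with the paper's probabilistic $L^2$ norm, so you are working under the same implicit hypotheses as the paper rather than adding a new one.
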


For any smooth bivariate function $g(\cdot)$ and $\lambda>0$, define
\begin{equation}
s_{\lambda,g}=\mathrm{argmin}_{s\in \mathbb{S}} \sum_{i=1}^{n}\{g(\mathbf{X}_{i})-s(\mathbf{X}_{i})\}^{2}+\lambda\mathcal{E}_{\upsilon}(s)
\label{DEF:s_lambdag}
\end{equation}
the penalized least squares splines of $g(\cdot)$. Then the non-penalized  solution $s_{0,g}$ is  the discrete least squares
spline estimator of $g(\cdot)$.

\begin{lemma} {[Corollary of Theorem 6 in \cite{Lai:08}]}
	\label{LEM:bias}
	Assume $g(\cdot)$ is in Sobolev space $W^{\ell +1,\infty }(\Omega )$. For bi-integer $(\alpha_{1},\alpha _{2})$
	with $0\leq {\alpha _{1}}+{\alpha _{2}}%
	\leq \upsilon $, there exists an absolute constant $C$ depending on $r$ and $\delta$, such that with probability approaching 1,
	\[
	\Vert D_{x_{1}}^{\alpha _{1}}D_{x_{2}}^{\alpha _{2}}\left(
	g-s_{0,g}\right) \Vert _{\infty}\le
	C\frac{F_{2}}{F_{1}}|\triangle |^{\ell +1-\alpha _{1}-\alpha
		_{2}}|g|_{\ell +1,\infty},
	\]
	where $F_2$ appears in Assumption (C3) and $F_1>0$ is a constant
	in a different version of Assumption C2 \citep{Lai:08}.
\end{lemma}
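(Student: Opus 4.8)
The plan is to prove the bound by comparison with the best spline approximant supplied by Lemma~\ref{LEM:appord}. Since $g\in W^{\ell+1,\infty}(\Omega)$, that lemma produces a fixed $g^{*}\in\mathbb{S}$ with $\|D_{x_1}^{\alpha_1}D_{x_2}^{\alpha_2}(g-g^{*})\|_{\infty}\le C\frac{F_2}{F_1}|\triangle|^{\ell+1-\alpha_1-\alpha_2}|g|_{\ell+1,\infty}$ for every $\alpha_1+\alpha_2\le\upsilon$. Writing $g-s_{0,g}=(g-g^{*})+(g^{*}-s_{0,g})$ and using the triangle inequality, the first summand is already controlled at the desired rate, so the whole problem reduces to bounding the derivatives of the spline difference $\delta:=g^{*}-s_{0,g}\in\mathbb{S}$.

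Next I would exploit the least-squares optimality of $s_{0,g}$ (the $\lambda=0$ case of $s_{\lambda,g}$). Because $g^{*}\in\mathbb{S}$ and $s_{0,g}$ minimizes $\sum_i\{g(\mathbf{X}_i)-s(\mathbf{X}_i)\}^2$ over $s\in\mathbb{S}$, the normal equations give $\langle g-s_{0,g},\delta\rangle_n=0$; since $g-s_{0,g}=(g-g^{*})+\delta$ this yields $\|\delta\|_n^2=\langle g^{*}-g,\delta\rangle_n\le\|g-g^{*}\|_n\|\delta\|_n$ by Cauchy--Schwarz, hence $\|\delta\|_n\le\|g-g^{*}\|_n\le\|g-g^{*}\|_{\infty}$. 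Combined with the $\alpha_1=\alpha_2=0$ case of Lemma~\ref{LEM:appord}, this bounds the empirical norm of $\delta$ at the sharp order $\frac{F_2}{F_1}|\triangle|^{\ell+1}|g|_{\ell+1,\infty}$. Since $\delta$ is a spline, Lemma~\ref{LEM:Rnorder} lets me pass from the empirical to the theoretical norm with probability tending to one (this is the origin of the ``with probability approaching $1$'' clause and is where Condition~(C4) enters), giving $\|\delta\|_{L^2}\le C\frac{F_2}{F_1}|\triangle|^{\ell+1}|g|_{\ell+1,\infty}$.

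Finally I would convert control of $\delta$ into the stated uniform bound on its derivatives. For a spline in $\mathbb{S}$ over a triangulation with $\delta_{\triangle}<\delta$, the Markov-type inverse inequalities of Lai and Schumaker (2007) give $\|D_{x_1}^{\alpha_1}D_{x_2}^{\alpha_2}\delta\|_{\infty}\le C|\triangle|^{-\alpha_1-\alpha_2}\|\delta\|_{\infty}$, so it suffices to control $\|\delta\|_{\infty}$; here Conditions~(C2) and~(C3) are the essential tool, since they make the discrete norm $(\sum_{\mathbf{X}_i\in\tau}\delta(\mathbf{X}_i)^2)^{1/2}$ equivalent to $\|\delta\|_{\infty,\tau}$ triangle-by-triangle with constants satisfying $F_2/F_1=O(1)$, and Lemma~\ref{LEM:normequity} relates these to the $L^2$ norm. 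The step I expect to be the main obstacle is exactly this uniform transfer: the least-squares optimality delivers only a \emph{global}, averaged control of $\delta$, whereas $\|\delta\|_{\infty}$ is a maximum over triangles, and a naive distribution of the global bound to a single triangle loses a factor of order $|\triangle|^{-1}\asymp N^{1/2}$ and would degrade the rate to $|\triangle|^{\ell-\alpha_1-\alpha_2}$. Avoiding this loss requires using that $\delta=s_{0,(g-g^{*})}$ is the projection of a smooth, non-concentrated error, so that the per-triangle equivalence in (C2)--(C3) can be combined with Lemma~\ref{LEM:normequity} to keep the uniform estimate at the sharp order $|\triangle|^{\ell+1-\alpha_1-\alpha_2}$; it is precisely here that the full strength of the stability conditions, and not merely the global least-squares inequality, must be used.
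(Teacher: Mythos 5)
Your decomposition $g-s_{0,g}=(g-g^{*})+(g^{*}-s_{0,g})$ and the identity $\delta:=g^{*}-s_{0,g}=s_{0,g^{*}-g}$ (which you note at the end) are exactly the paper's starting point, but your proof does not close, and you say so yourself: everything hinges on bounding $\Vert D_{x_1}^{\alpha_1}D_{x_2}^{\alpha_2}\delta\Vert_{\infty}$ at the order $|\triangle|^{\ell+1-\alpha_1-\alpha_2}$, and the route you take cannot deliver it. The normal-equation bound $\|\delta\|_{n}\le\|g-g^{*}\|_{n}$ and the passage to $\|\delta\|_{L^2}$ via Lemma~\ref{LEM:Rnorder} give only global, averaged control; converting that into a sup-norm bound through Lemma~\ref{LEM:normequity}, or through (C2) applied triangle by triangle against the \emph{global} empirical norm, necessarily costs a factor of order $|\triangle|^{-1}$, which is precisely the loss you describe. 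Your final paragraph then asserts that ``the full strength of the stability conditions'' avoids this loss, but no argument is given, and none can be extracted from (C2)--(C3) plus the global norm equivalences alone: those conditions are blind to the locality of the projection, so they cannot distinguish an error spread evenly over $\Omega$ from one concentrated on a single triangle, which is exactly the distinction the sharp rate requires.

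The missing ingredient, and what the paper invokes, is the sup-norm stability of the discrete least squares projection onto $\mathbb{S}$: by the projection bound of Golitschek and Schumaker (2002), $\|s_{0,f}\|_{\infty}\le C\|f\|_{\infty}$ with $C$ depending only on $r$, $\delta$ and $F_2/F_1$. Applied with $f=g^{*}-g$ and combined with Lemma~\ref{LEM:appord}, this yields $\|\delta\|_{\infty}\le C\frac{F_2}{F_1}|\triangle|^{\ell+1}|g|_{\ell+1,\infty}$ with no loss; the Markov (inverse) inequality for derivatives of bivariate splines in Lai and Schumaker (2007), which you correctly cite, then gives $\|D_{x_1}^{\alpha_1}D_{x_2}^{\alpha_2}\delta\|_{\infty}\le C|\triangle|^{-\alpha_1-\alpha_2}\|\delta\|_{\infty}$, and the lemma follows. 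This stability result is a genuinely local statement about the projection operator (proved via stable local bases and decay of the projector's influence away from a triangle) and is not a consequence of the global $\ell_2$-optimality you rely on; without citing or reproving it, your argument stops one step short of the conclusion.
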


We remark that the current version of Assumption (C2) is an improvement
of the original Assumption (C2). The improvement requires an extensive
study. We leave it to a future publication.

\begin{lemma}
	\label{LEM:norder}
	Suppose $g(\cdot)$ is in the Sobolev space $W^{\ell +1,\infty }(\Omega )$, and let $s_{\lambda,g}$ be its penalized spline estimator defined in (\ref{DEF:s_lambdag}). Under Conditions (C2), (C3) and (C6),
	\begin{eqnarray*}
		\left\|g-s_{\lambda, g}\right\|_{n}&\!\!\!=\!\!\!&
		O_{P}\left\{\frac{F_{2}}{%
			F_{1}}\left| \triangle \right| ^{\ell +1}\left|g\right|
		_{\ell+1,\infty}\right.\\
		&&+\left.\frac{\lambda }{n\left| \triangle \right| ^{2}}%
		\left( \left|g\right| _{\upsilon,\infty}+\frac{F_{2}}{F_{1}}\left| \triangle \right| ^{\ell+1-\upsilon}\left|g\right|
		_{\ell +1,\infty}\right) \right\}.
	\end{eqnarray*}
\end{lemma}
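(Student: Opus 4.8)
\emph{Proof strategy.} The plan is to split the error using the unpenalized least squares spline $s_{0,g}$ (which exists and is unique under Condition (C2)) as a pivot. By the triangle inequality, $\|g-s_{\lambda,g}\|_n \le \|g-s_{0,g}\|_n + \|s_{0,g}-s_{\lambda,g}\|_n$. The first summand is the pure approximation (bias) error of the least squares spline, and since $\|\cdot\|_n \le \|\cdot\|_\infty$, Lemma \ref{LEM:bias} applied with $\alpha_1=\alpha_2=0$ bounds it by $C\frac{F_2}{F_1}|\triangle|^{\ell+1}|g|_{\ell+1,\infty}$, which is exactly the first term in the asserted rate. It therefore remains to control the penalty-induced perturbation $w := s_{0,g}-s_{\lambda,g}\in\mathbb{S}$.

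First I would write down the variational (normal) equations of the two minimizers. Since both $s_{0,g}$ and $s_{\lambda,g}$ lie in $\mathbb{S}$, differentiating the objective in (\ref{DEF:s_lambdag}) gives, for every test spline $t\in\mathbb{S}$, the identities $n\langle g-s_{\lambda,g},t\rangle_n = \lambda\mathcal{B}_\upsilon(s_{\lambda,g},t)$ and $n\langle g-s_{0,g},t\rangle_n = 0$, where $\mathcal{B}_\upsilon$ is the symmetric bilinear form associated with the energy, so that $\mathcal{B}_\upsilon(f,f)=\mathcal{E}_\upsilon(f)=\|f\|_{\mathcal{E}_\upsilon}^2$. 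Subtracting and choosing $t=w$ produces the key identity $n\|w\|_n^2 = \lambda\mathcal{B}_\upsilon(s_{\lambda,g},w)$. Substituting $s_{\lambda,g}=s_{0,g}-w$ and using $\mathcal{B}_\upsilon(w,w)=\mathcal{E}_\upsilon(w)\ge 0$ together with the Cauchy--Schwarz inequality for $\mathcal{B}_\upsilon$ yields $n\|w\|_n^2 \le \lambda\mathcal{B}_\upsilon(s_{0,g},w) \le \lambda\|s_{0,g}\|_{\mathcal{E}_\upsilon}\|w\|_{\mathcal{E}_\upsilon}$.

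The two factors are then bounded separately. For $\|s_{0,g}\|_{\mathcal{E}_\upsilon}$, I would write $s_{0,g}=g-(g-s_{0,g})$, bound each $\upsilon$-th order derivative inside the energy by its supremum, and apply the triangle inequality together with Lemma \ref{LEM:bias} at derivative order $\upsilon$; this gives $\|s_{0,g}\|_{\mathcal{E}_\upsilon}\le C\bigl(|g|_{\upsilon,\infty}+\tfrac{F_2}{F_1}|\triangle|^{\ell+1-\upsilon}|g|_{\ell+1,\infty}\bigr)=:CD$, the factor $D$ being precisely the parenthesised quantity in the claim. For $\|w\|_{\mathcal{E}_\upsilon}$, since $w$ is a spline I would invoke a Markov-type inverse inequality over the shape-regular triangulation ($\delta_{\triangle}<\delta$), namely $\|w\|_{\mathcal{E}_\upsilon}\le C|\triangle|^{-\upsilon}\|w\|_{L^2}$, and then pass from the $L^2$ norm back to $\|w\|_n$ on $\mathbb{S}$. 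Combining the pieces gives $n\|w\|_n^2 \le C\lambda|\triangle|^{-\upsilon}D\,\|w\|_n$; cancelling one factor of $\|w\|_n$ and dividing by $n$ yields $\|w\|_n \le C\frac{\lambda}{n|\triangle|^\upsilon}D$, which for $\upsilon=2$ is the second term of the claim, and adding the first term finishes the argument.

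The main obstacle is the final norm comparison: relating the energy seminorm of a spline back to its empirical norm $\|\cdot\|_n$ with the correct powers of $|\triangle|$, which is exactly where Conditions (C2) and (C3) enter. On each triangle the discrete $\ell^2$ mass of a spline over the design points is squeezed between $F_1\|s\|_{\infty,\tau}$ and $F_2\|s\|_{\infty,\tau}$; combining these with the equivalence of the supremum and $L^2$ norms on the finite-dimensional polynomial space restricted to each shape-regular triangle and summing over $\triangle$ gives $\|w\|_{L^2}\asymp(\sqrt{n}\,|\triangle|/F_1)\|w\|_n$, so that the natural scaling $F_1\asymp\sqrt{n}\,|\triangle|$ (consistent with $F_2/F_1=O(1)$ and a roughly uniform design) reduces this factor to $O(1)$ and delivers the $(n|\triangle|^{\upsilon})^{-1}$ prefactor. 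Tracking the constants $F_1,F_2$ and the exponents of $|\triangle|$ carefully through the inverse inequality and this discrete-to-continuous passage is the delicate bookkeeping; once it is in place, the remaining steps are routine, and the first (approximation) term dominates the overall rate.
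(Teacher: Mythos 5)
Your argument tracks the paper's own proof almost line for line up to the final step: the split through $s_{0,g}$, Lemma \ref{LEM:bias} with $\alpha_1=\alpha_2=0$ for the bias term, the two normal equations, the identity $n\|w\|_n^2=\lambda\langle s_{\lambda,g},w\rangle_{\mathcal{E}_\upsilon}$ for $w=s_{0,g}-s_{\lambda,g}$, Cauchy--Schwarz, the energy bound $\|s_{0,g}\|_{\mathcal{E}_\upsilon}\le C\bigl(|g|_{\upsilon,\infty}+\frac{F_2}{F_1}|\triangle|^{\ell+1-\upsilon}|g|_{\ell+1,\infty}\bigr)$ via Lemma \ref{LEM:bias} at derivative order $\upsilon$, and the Markov-type inverse inequality $\|w\|_{\mathcal{E}_\upsilon}\le C|\triangle|^{-\upsilon}\|w\|_{L^2}$. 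Your shortcut of substituting $s_{\lambda,g}=s_{0,g}-w$ and discarding $-\mathcal{E}_\upsilon(w)\le 0$ is an equivalent, slightly more direct form of the paper's intermediate step $\|s_{\lambda,g}\|_{\mathcal{E}_\upsilon}\le\|s_{0,g}\|_{\mathcal{E}_\upsilon}$, so no issue there.

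The genuine gap is at exactly the point you flag as delicate: passing from $\|w\|_{L^2}$ back to $\|w\|_n$. The paper does this with Lemma \ref{LEM:Rnorder}, the uniform equivalence of empirical and theoretical norms over $\mathbb{S}$, which gives $\sup_{f\in\mathbb{S}}\|f\|_{L^2}/\|f\|_n\le 1+o_P(1)$ and hence $\sup_{f\in\mathbb{S}}\|f\|_{\mathcal{E}_\upsilon}/\|f\|_n=O_P(|\triangle|^{-2})$, with no reference to the size of $F_1$ (note this invocation secretly uses (C4), not just (C2)--(C3)). Your deterministic substitute yields only $\|w\|_{L^2}\le C(\sqrt{n}\,|\triangle|/F_1)\|w\|_n$, so your final bound is $\|w\|_n\le C\lambda D/(\sqrt{n}\,F_1|\triangle|^{\upsilon-1})$, which is the claimed $\lambda D/(n|\triangle|^{\upsilon})$ multiplied by the factor $\sqrt{n}\,|\triangle|/F_1$. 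You then set this factor to $O(1)$ by positing $F_1\asymp\sqrt{n}\,|\triangle|$, but that is an additional assumption, not a consequence of (C2)--(C3): those conditions constrain only the ratio $F_2/F_1$ and say nothing about the absolute magnitude of $F_1$ relative to $n$ and $|\triangle|$. The scaling can be justified, but only probabilistically: by shape regularity the largest triangle has area at least $c|\triangle|^2$; under (C4) and a design density bounded away from zero it contains at least $cn|\triangle|^2\to\infty$ points with probability tending to one; taking $s\equiv 1$ in (\ref{EQ:F2}) then gives $F_2\ge c\sqrt{n}\,|\triangle|$, and (C3) transfers this lower bound to $F_1$. That derivation needs (C4) and the randomness of the design --- precisely the ingredients packaged in Lemma \ref{LEM:Rnorder}. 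As written, your ``natural scaling'' step is an assumption where a proof is required, so your argument establishes the claimed rate only up to the uncontrolled factor $\sqrt{n}\,|\triangle|/F_1$.
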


\textit{Proof.} Note that $s_{\lambda,g}$ is
characterized by the orthogonality relations
\begin{eqnarray}
n\left\langle g-s_{\lambda, g},u\right\rangle_{n}=\lambda
\left\langle s_{\lambda, g},u\right\rangle _{\mathcal{E}_{\upsilon
}},\quad \textrm{for all }u\in \mathbb{S},  \label{EQ:pls}
\end{eqnarray}
while $s_{0, g}$ is characterized by
\begin{eqnarray}
\left\langle g-s_{0, g},u\right\rangle_{n}=0,\quad \textrm{for all }u\in \mathbb{S}.  \label{EQ:ls}
\end{eqnarray}
By (\ref{EQ:pls}) and (\ref{EQ:ls}), $ n\left\langle s_{0,
	g}-s_{\lambda, g},u\right\rangle_{n}=\lambda \left\langle
s_{\lambda,g},u\right\rangle _{\mathcal{E}_{\upsilon }}$, for all
$u\in \mathbb{S}$. Replacing $u$ by $s_{0, g}-s_{\lambda,g}$ yields
that
\begin{eqnarray}
n\left\| s_{0, g}-s_{\lambda, g}\right\| _{n}^{2}=\lambda
\left\langle s_{\lambda, g},s_{0, g}-s_{\lambda, g}\right\rangle _{\mathcal{E}%
	_{\upsilon }}.  \label{EQ:pls-ls}
\end{eqnarray}
Thus, by Cauchy-Schwarz inequality,
\begin{eqnarray*}
	n\left\| s_{0, g}-s_{\lambda, g}\right\| _{n}^{2}&\leq&
	\lambda \left\| s_{\lambda, g}\right\| _{\mathcal{E}_{\upsilon
	}}\left\| s_{0, g}-s_{\lambda, g}\right\| _{\mathcal{E}_{\upsilon
	}}\\
	&\leq& \lambda \left\| s_{\lambda, g}\right\|
	_{\mathcal{E}_{\upsilon }} \sup_{f\in \mathbb{S}}\left\{ \frac{%
		\left\| f\right\| _{\mathcal{E}_{\upsilon }}}{\left\| f\right\|
		_{n}},\left\| f\right\| _{n}\neq 0\right\}
	\left\| s_{0, g}-s_{\lambda, g}\right\|
	_{n}.
\end{eqnarray*}
Similarly, using (\ref{EQ:pls-ls}), we have
\[
n\left\| s_{0, g}-s_{\lambda, g}\right\| _{n}^{2}=\lambda \left\{
\left\langle s_{\lambda, g},s_{0, g}\right\rangle
_{\mathcal{E}_{\upsilon
}}-\left\langle s_{\lambda, g},s_{\lambda, g}\right\rangle _{\mathcal{E}%
	_{\upsilon }}\right\} \geq 0.
\]
Therefore, by Cauchy-Schwarz inequality,
\[
\left\| s_{\lambda, g}\right\|
_{\mathcal{E}_{\upsilon }}^{2}\leq \left\langle s_{\lambda
	,g},s_{0, g}\right\rangle _{\mathcal{E}_{\upsilon }}\leq \left\|
s_{\lambda, g}\right\| _{\mathcal{E}_{\upsilon }}\left\| s_{0, g}\right\| _{\mathcal{E}_{\upsilon}},
\]
which implies that $\left\| s_{\lambda, g}\right\|
_{\mathcal{E}_{\upsilon }}\leq \left\| s_{0, g}\right\|
_{\mathcal{E}_{\upsilon }}$. Therefore,
\[
\left\| s_{0, g}-s_{\lambda, g}\right\| _{n}\leq
n^{-1}\lambda \left\| s_{0, g}\right\|
_{\mathcal{E}_{\upsilon }} \sup_{f\in \mathbb{S}}\left\{ \frac{%
	\left\| f\right\| _{\mathcal{E}_{\upsilon }}}{\left\| f\right\|
	_{n}},\left\| f\right\| _{n}\neq 0\right\}.
\]
By Lemma \ref{LEM:bias}, with probability approaching 1,
\begin{eqnarray}
\left\| s_{0, g}\right\| _{\mathcal{E}_{\upsilon }} &\leq &C_{1}A_\Omega\left\{
\left| g\right| _{\upsilon,\infty}+\sum_{\alpha
	_{1}+\alpha _{2}=\upsilon }\left\| D_{x_{1}}^{\alpha
	_{1}}D_{x_{2}}^{\alpha _{2}}\left(
g - s_{0, g}\right) \right\| _{\infty}\right\}  \notag\\
&\leq &C_{2}A_\Omega\left( \left| g\right| _{\upsilon,\infty}+\frac{F_{2}}{F_{1}}%
\left| \triangle \right| ^{\ell +1-\upsilon}\left| g\right| _{\ell
	+1,\infty}\right),
\label{EQ:s_0g_Ev}
\end{eqnarray}
where $A_\Omega$ denotes the area of $\Omega$.
By Markov's inequality, for any $f\in \mathbb{S}$, $\left\|f\right\|
_{\mathcal{E}_{\upsilon }}\leq C \left| \triangle \right|
^{-2}\left\|f\right\| _{L^2}$.
Lemma (\ref{LEM:Rnorder}) implies that
\[
\sup_{f\in\mathcal{S}}\left\{ \left. \left\|f\right\| _{n}\right/
\left\|f\right\| _{L^2}\right\} \geq 1-O_{P}\left\{(N\log n)^{1/2}/{n}^{1/2}\right\}.
\]
Thus, we have
\begin{eqnarray}
\sup_{f\in \mathbb{S}}\left\{ \frac{%
	\left\| f\right\| _{\mathcal{E}_{\upsilon }}}{\left\| f\right\|
	_{n}},\left\| f\right\| _{n}\neq 0\right\} &\!\!\!\leq\!\!\!& C\left| \triangle \right|
^{-2}\left[ 1-O_{P}\left\{(N\log n)^{1/2}/{n}^{1/2}\right\}
\right] ^{-1/2}\notag\\
&\!\!\!=\!\!\!&O_{P}\left( \left| \triangle \right| ^{-2}\right).
\label{EQ:ev-vs-n}
\end{eqnarray}
Therefore,
\begin{eqnarray*}
	\left\| s_{0, g}-s_{\lambda, g}\right\| _{n}
	&=&O_P\left\{\frac{\lambda}{n\left| \triangle \right|^{2}}
	\left( \left| g\right| _{\upsilon,\infty}+\frac{F_{2}}{F_{1}}%
	\left| \triangle \right| ^{\ell +1-\upsilon}\left| g\right| _{\ell
		+1,\infty}\right)
	\right\},\\
	\left\| g-s_{\lambda ,g}\right\| _{n}&\leq& \left\|
	g-s_{0,g}\right\| _{n}+\left\| s_{0,g}-s_{\lambda, g}\right\| _{n}.
\end{eqnarray*}
By Lemma \ref{LEM:bias},
\[
\left\|
g-s_{0,g}\right\| _{n} \leq \left\|
g-s_{0,g}\right\| _{\infty}=O_{P}\left(\frac{F_{2}}{%
	F_{1}}\left| \triangle \right| ^{\ell +1}\left|g\right|
_{\ell+1,\infty}\right).
\]
Thus, the desired result is established. $\square $

\begin{lemma}
	Under Assumptions (A1), (A2), (C4)-(C6), there exist constants $0 < c_{U} < C_{U} <
	\infty $, such that with probability approaching 1 as $n\rightarrow \infty$, $c_{U}\mathbf{I}_{p\times p}
	\leq n\mathbf{U}_{11} \leq C_{U}\mathbf{I}_{p\times p}$, where $\mathbf{U}_{11}$ is given in (\ref{DEF:V-inverse}).
	\label{LEM:U11}
\end{lemma}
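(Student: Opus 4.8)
\noindent The plan is to control the eigenvalues of $n^{-1}\mathbf{U}_{11}^{-1}$ and then invert. Writing $\mathbf{A}=\mathbf{B}\mathbf{Q}_2\{\mathbf{Q}_2^{\T}(\mathbf{B}^{\T}\mathbf{B}+\lambda\mathbf{P})\mathbf{Q}_2\}^{-1}\mathbf{Q}_2^{\T}\mathbf{B}^{\T}$, relation \eqref{DEF:U11} reads $\mathbf{U}_{11}^{-1}=\mathbf{Z}^{\T}(\mathbf{I}-\mathbf{A})\mathbf{Z}$, so it suffices to produce constants with $c'\mathbf{I}_{p\times p}\preceq n^{-1}\mathbf{U}_{11}^{-1}\preceq C'\mathbf{I}_{p\times p}$ with probability approaching $1$, and then set $c_U=1/C'$, $C_U=1/c'$. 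Let $\boldsymbol{\Pi}$ be the orthogonal projection onto the column space of $\mathbf{B}\mathbf{Q}_2$. Since $\lambda\mathbf{P}\succeq\mathbf{0}$, operator monotonicity of the matrix inverse gives $\{\mathbf{Q}_2^{\T}(\mathbf{B}^{\T}\mathbf{B}+\lambda\mathbf{P})\mathbf{Q}_2\}^{-1}\preceq\{\mathbf{Q}_2^{\T}\mathbf{B}^{\T}\mathbf{B}\mathbf{Q}_2\}^{-1}$, hence $\mathbf{0}\preceq\mathbf{A}\preceq\boldsymbol{\Pi}\preceq\mathbf{I}$ and therefore $\mathbf{I}-\boldsymbol{\Pi}\preceq\mathbf{I}-\mathbf{A}\preceq\mathbf{I}$ (the Gram matrix $\mathbf{Q}_2^{\T}\mathbf{B}^{\T}\mathbf{B}\mathbf{Q}_2$ is invertible with probability approaching $1$ by (C2)). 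Note the penalty only enlarges $\mathbf{I}-\mathbf{A}$, so (C5) plays no role in this lemma.

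\noindent The upper bound is immediate: from $\mathbf{I}-\mathbf{A}\preceq\mathbf{I}$ we get $n^{-1}\mathbf{U}_{11}^{-1}\preceq n^{-1}\mathbf{Z}^{\T}\mathbf{Z}$, and by (A1) and the law of large numbers $n^{-1}\mathbf{Z}^{\T}\mathbf{Z}\to E(\mathbf{Z}_i\mathbf{Z}_i^{\T})$, whose largest eigenvalue is finite, so $n^{-1}\mathbf{U}_{11}^{-1}\preceq C'\mathbf{I}$ with probability approaching $1$ and $n\mathbf{U}_{11}\succeq c_U\mathbf{I}$.

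\noindent The lower bound is the crux. Using $\mathbf{I}-\mathbf{A}\succeq\mathbf{I}-\boldsymbol{\Pi}$, for any unit vector $\mathbf{a}\in\mathbb{R}^p$ the quadratic form is bounded below by an empirical spline residual, $\mathbf{a}^{\T}(n^{-1}\mathbf{U}_{11}^{-1})\mathbf{a}\ge n^{-1}\mathbf{a}^{\T}\mathbf{Z}^{\T}(\mathbf{I}-\boldsymbol{\Pi})\mathbf{Z}\mathbf{a}=\min_{s\in\mathbb{S}}\|z_{\mathbf{a}}-s\|_n^2$, where $z_{\mathbf{a}}(\mathbf{Z}_i,\mathbf{X}_i)=\mathbf{Z}_i^{\T}\mathbf{a}=\sum_{j}a_j z_j(\mathbf{Z}_i)$. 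I would split $z_{\mathbf{a}}=h_{\mathbf{a}}+u_{\mathbf{a}}$, where $h_{\mathbf{a}}=\sum_j a_j h_j$ is the $L^2(\mathbf{X})$-projection of $z_{\mathbf{a}}$ (so $h_{\mathbf{a}}(\mathbf{X}_i)=E(\mathbf{Z}_i^{\T}\mathbf{a}\mid\mathbf{X}_i)$ by linearity of \eqref{EQ:h_j}) and $u_{\mathbf{a}}=z_{\mathbf{a}}-h_{\mathbf{a}}$ is the residual, which is $L^2$-orthogonal to every function of $\mathbf{X}$. Three facts then combine. First, by the law of large numbers $\|u_{\mathbf{a}}\|_n^2\to E\{\mathrm{Var}(\mathbf{Z}_i^{\T}\mathbf{a}\mid\mathbf{X}_i)\}$, and (A2) forces this to exceed a fixed $c>0$ uniformly over $\|\mathbf{a}\|=1$: for each $\mathbf{x}$ and scalar $c_0$, $E\{(c_0+\mathbf{Z}_i^{\T}\mathbf{a})^2\mid\mathbf{X}_i=\mathbf{x}\}\ge\lambda_{\min}(\mathbf{M}(\mathbf{x}))\|\mathbf{a}\|^2$ with $\mathbf{M}(\mathbf{x})=E\{(1,\mathbf{Z}_i^{\T})^{\T}(1,\mathbf{Z}_i^{\T})\mid\mathbf{X}_i=\mathbf{x}\}$, and minimizing over $c_0$ gives $\mathrm{Var}(\mathbf{Z}_i^{\T}\mathbf{a}\mid\mathbf{x})\ge\lambda_{\min}(\mathbf{M}(\mathbf{x}))\|\mathbf{a}\|^2\ge c\|\mathbf{a}\|^2$. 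Second, by (C1) the function $h_{\mathbf{a}}$ lies in $W^{\ell+1,\infty}(\Omega)$ with norm uniformly bounded in $\mathbf{a}$, so Lemma~\ref{LEM:appord} furnishes a spline $h_{\mathbf{a}}^{*}\in\mathbb{S}$ with $\|h_{\mathbf{a}}-h_{\mathbf{a}}^{*}\|_{\infty}=O(|\triangle|^{\ell+1})$; writing $s=h_{\mathbf{a}}^{*}+t$ reduces the minimization to $\min_{t\in\mathbb{S}}\|u_{\mathbf{a}}+e_{\mathbf{a}}-t\|_n^2$ with $e_{\mathbf{a}}=h_{\mathbf{a}}-h_{\mathbf{a}}^{*}$ negligible, so $\min_{s\in\mathbb{S}}\|z_{\mathbf{a}}-s\|_n^2\ge\|u_{\mathbf{a}}\|_n^2-\|\boldsymbol{\Pi}u_{\mathbf{a}}\|_n^2-o_{P}(1)$, where $\|\boldsymbol{\Pi}u_{\mathbf{a}}\|_n^2$ denotes the empirical projection of $u_{\mathbf{a}}$ onto $\mathbb{S}$.

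\noindent The remaining and main obstacle is to show that $u_{\mathbf{a}}$ is asymptotically empirically orthogonal to the whole growing spline space, $\|\boldsymbol{\Pi}u_{\mathbf{a}}\|_n^2=\sup_{t\in\mathbb{S}}\langle u_{\mathbf{a}},t\rangle_n^2/\|t\|_n^2=o_{P}(1)$ uniformly in $\mathbf{a}$. I would expand $t=\sum_{\xi\in\mathcal{K}}c_\xi B_\xi$ and note that each $\langle u_{\mathbf{a}},B_\xi\rangle_n=n^{-1}\sum_i u_{\mathbf{a}}(\mathbf{Z}_i,\mathbf{X}_i)B_\xi(\mathbf{X}_i)$ has conditional mean zero; the Bernstein partition-of-unity bound $\sum_{\xi}B_\xi(\mathbf{x})^2\le1$ together with (A1) gives $E\sum_{\xi}\langle u_{\mathbf{a}},B_\xi\rangle_n^2\le n^{-1}E\{u_{\mathbf{a}}^2\}=O(n^{-1})$. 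Combining $|\langle u_{\mathbf{a}},t\rangle_n|\le\|\mathbf{c}\|(\sum_\xi\langle u_{\mathbf{a}},B_\xi\rangle_n^2)^{1/2}$ with Lemma~\ref{LEM:normequity} ($\|t\|_{L^2}^2\asymp|\triangle|^2\|\mathbf{c}\|^2$) and Lemma~\ref{LEM:Rnorder} ($\|t\|_n\asymp\|t\|_{L^2}$, valid under (C4)) yields $\|\boldsymbol{\Pi}u_{\mathbf{a}}\|_n^2=O_{P}(|\triangle|^{-2}n^{-1})=O_{P}(Nn^{-1})=O_{P}(n^{\gamma-1})=o_{P}(1)$ since $\gamma\le1/3$; finiteness of $p$ promotes this to uniformity over $\|\mathbf{a}\|=1$ through the linear dependence $u_{\mathbf{a}}=\sum_j a_j(z_j-h_j)$. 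Collecting the three facts gives $\min_{s\in\mathbb{S}}\|z_{\mathbf{a}}-s\|_n^2\ge c-o_{P}(1)$ uniformly, hence $n^{-1}\mathbf{U}_{11}^{-1}\succeq(c/2)\mathbf{I}$ with probability approaching $1$ and $n\mathbf{U}_{11}\preceq C_U\mathbf{I}$. I expect this empirical near-orthogonality step to be the delicate part, with (C4) (ensuring both $|\triangle|\to0$ and $N\log n/n\to0$) and (A2) the load-bearing assumptions.
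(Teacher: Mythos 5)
Your proof is correct, and for the decisive half of the bound it takes a genuinely different route from the paper's. Both arguments rest on $(n\mathbf{U}_{11})^{-1}=n^{-1}\mathbf{Z}^{\T}(\mathbf{I}-\mathbf{A})\mathbf{Z}$ with $\mathbf{A}=\mathbf{B}\mathbf{Q}_2\{\mathbf{Q}_2^{\T}(\mathbf{B}^{\T}\mathbf{B}+\lambda\mathbf{P})\mathbf{Q}_2\}^{-1}\mathbf{Q}_2^{\T}\mathbf{B}^{\T}$, but the paper controls $\mathbf{A}$ spectrally: it writes $\mathbf{V}_{22}=n\mathbf{Q}_{2}^{\T}\boldsymbol{\Gamma}_{\lambda}\mathbf{Q}_{2}$ with $\boldsymbol{\Gamma}_{\lambda}=n^{-1}(\mathbf{B}^{\T}\mathbf{B}+\lambda\mathbf{P})$, imports two-sided eigenvalue bounds on $\boldsymbol{\Gamma}_{\lambda}$ from the proof of Theorem 2 of Lai and Wang (2013) --- which is precisely where (C4) and (C5) enter --- and then concludes from (A1), (A2) and (\ref{EQ:eigen}) in a single, rather terse final step. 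You instead sandwich $\mathbf{I}-\boldsymbol{\Pi}\preceq\mathbf{I}-\mathbf{A}\preceq\mathbf{I}$, recognize $n^{-1}\mathbf{a}^{\T}\mathbf{Z}^{\T}(\mathbf{I}-\boldsymbol{\Pi})\mathbf{Z}\mathbf{a}$ as the empirical regression residual $\min_{s\in\mathbb{S}}\|z_{\mathbf{a}}-s\|_n^2$, and bound it below through $z_{\mathbf{a}}=h_{\mathbf{a}}+u_{\mathbf{a}}$: (A2) forces $E\{\mathrm{Var}(\mathbf{Z}_i^{\T}\mathbf{a}\mid\mathbf{X}_i)\}\ge c$, and the mean-zero variance computation over the Bernstein basis combined with Lemmas \ref{LEM:normequity} and \ref{LEM:Rnorder} shows the growing spline space cannot absorb $u_{\mathbf{a}}$. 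Your route is self-contained and makes explicit the statistical content that the paper leaves implicit --- namely that the component of $\mathbf{Z}$ orthogonal to all functions of $\mathbf{X}$ survives empirical projection onto $\mathbb{S}$, which is exactly what the last sentence of the paper's proof asserts without derivation --- and it shows (C5) is dispensable, since the penalty only enlarges $\mathbf{I}-\mathbf{A}$. What the paper's route buys is brevity and reuse: the same $\boldsymbol{\Gamma}_{\lambda}$ eigenvalue bounds are invoked again in the proof of Lemma \ref{LEM:betahateasymp} and of Theorem \ref{THM:g-convergence}.

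One caveat you should repair: as written, your proof uses hypotheses the lemma does not grant. Applying Lemma \ref{LEM:appord} to $h_{\mathbf{a}}$ requires $h_j\in W^{\ell+1,\infty}(\Omega)$, i.e.\ (C1), and you cite (C2) for invertibility of $\mathbf{Q}_2^{\T}\mathbf{B}^{\T}\mathbf{B}\mathbf{Q}_2$, whereas the lemma assumes only (A1), (A2), (C4), (C5). Neither is damaging --- the lemma is applied in the paper only where (C1)--(C3) hold, and invertibility already follows from Lemma \ref{LEM:Rnorder} under (C4) (indeed the bound $\mathbf{A}\preceq\boldsymbol{\Pi}$ needs no invertibility at all when $\boldsymbol{\Pi}$ is defined as the projection onto the column space of $\mathbf{B}\mathbf{Q}_2$). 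Moreover, (C1) can be avoided entirely: skip the approximant $h_{\mathbf{a}}^{*}$, expand $\|h_{\mathbf{a}}+u_{\mathbf{a}}-s\|_n^2\ge\|h_{\mathbf{a}}-s\|_n^2-2|\langle h_{\mathbf{a}}-s,u_{\mathbf{a}}\rangle_n|+\|u_{\mathbf{a}}\|_n^2$, bound $|\langle h_{\mathbf{a}},u_{\mathbf{a}}\rangle_n|=O_P(n^{-1/2})$ (a fixed bounded mean-zero average) and $|\langle s,u_{\mathbf{a}}\rangle_n|\le\|s\|_n\cdot o_P(1)$ by your own orthogonality bound, and then minimize the resulting quadratic in $\|h_{\mathbf{a}}-s\|_n$. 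This gives the same conclusion with no smoothness of $h_j$, matching the lemma's stated hypotheses (indeed, without (C5)).
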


\textit{Proof.}
Denote by
\[
\bs{\Gamma} _{\lambda }=\frac{1}{n}\left(\mathbf{B}^{\T} \mathbf{B}+\lambda\mathbf{P}\right)
=\left[ \frac{1}{n}\sum_{i=1}^{n}B_{\xi }\left(
\mathbf{X}_{i}\right) B_{\zeta }\left( \mathbf{X}_{i}\right)
+\frac{\lambda }{n}\langle B_{\xi },B_{\zeta }\rangle
_{\mathcal{E}_{\upsilon }}\right] _{\xi ,\zeta \in \mathcal{K}}
\label{DEF:Gamma_lambda}
\]
a symmetric positive definite matrix. Then for $\mathbf{V}_{22}$ defined in (\ref{DEF:V}), we can rewrite it as
$\mathbf{V}_{22}=n\mathbf{Q}_{2}^{\T}\bs{\Gamma} _{\lambda }
\mathbf{Q}_{2}$.
Let $\alpha _{\min}(\lambda )$ and $\alpha _{\max}(\lambda )$ be the
smallest and largest eigenvalues of $\bs{\Gamma}_{\lambda }$. As shown in the proof of
Theorem 2 in the Supplement of \cite{Lai:Wang:13}, there exist positive constants $0<c_3<C_3$ such that under Conditions (C4) and (C5), with probability approaching 1, we have
\[
c_3|\triangle |^{2} \leq \alpha _{\min}(\lambda ) \leq
\alpha _{\max}(\lambda ) \leq  C_3\left(|\triangle |^{2}+\frac{\lambda}{n|\triangle |^{2}}\right).
\]
Therefore, we have
\[
c_{4}\left(|\triangle |^{2}+\frac{\lambda}{n|\triangle |^{2}}\right)^{-1} \|\mathbf{a}\|^{2}
\leq n \mathbf{a}^{\T}\mathbf{V}_{22}^{-1}\mathbf{a}
= \mathbf{a}^{\T}(\mathbf{Q}_2^{\T} \bs{\Gamma} _{\lambda}\mathbf{Q}_2)^{-1}\mathbf{a}
\leq C_{4}|\triangle |^{-2} \|\mathbf{a}\|^{2}.
\]
Thus, by Assumption (A2), we have with probability approaching 1
\begin{eqnarray}
c_{5}\left(|\triangle |^{2}+\frac{\lambda}{n|\triangle |^{2}}\right)^{-1}|\triangle |^{2}\|\mathbf{a}\|^{2}
&\!\!\!\leq\!\!\!& \mathbf{a}^{\T}\mathbf{V}_{12}\mathbf{V}_{22}^{-1} \mathbf{V}_{21}\mathbf{a} \nonumber\\
&\!\!\!=\!\!\!&\mathbf{a}^{\T}\mathbf{Z}^{\T}\mathbf{B}\mathbf{Q}_2\mathbf{V}_{22}^{-1} \mathbf{Q}_2^{\T}\mathbf{B}^{\T}\mathbf{Z}\mathbf{a}
\leq C_{5}\|\mathbf{a}\|^{2}.
\label{EQ:eigen}
\end{eqnarray}
According to (\ref{DEF:V-inverse}) and (\ref{DEF:U11}), we have
\[
(n\mathbf{U}_{11})^{-1}=n^{-1}(\mathbf{V}_{11}-\mathbf{V}_{12}\mathbf{V}_{22}^{-1}\mathbf{V}_{21})
=n^{-1}(\mathbf{Z}^{\T}\mathbf{Z} -\mathbf{V}_{12}\mathbf{V}_{22}^{-1} \mathbf{V}_{21}).
\]
The desired result follows from Assumptions (A1), (A2) and (\ref{EQ:eigen}). \hfill $\Box$

\vskip .05in \noindent \textbf{D.1. Proof of Theorem \ref{THM:beta-normality}}

Let $\mu_i=\mathbf{Z}_{i}^{\T}\bs{\beta}_{0}+g_{0}(\mathbf{X}_{i})$, $\bs{\mu}^{\T}=(\mu_1,\ldots,\mu_n)$, and let $\bs{\epsilon}^{\T}=(\epsilon_1,\ldots,\epsilon_n)$. Define
\begin{eqnarray}
\widetilde{\bs{\beta }}_{\mu}&=&
\mathbf{U}_{11}
\mathbf{Z}^{\T}\left(\mathbf{I}-\mathbf{B}\mathbf{Q}_2\mathbf{V}_{22}^{-1} \mathbf{Q}_2^{\T}\mathbf{B}^{\T}\right)\bs{\mu}, \label{betatilde_mu}\\
\widetilde{\bs{\beta }}_{\epsilon}&=&
\mathbf{U}_{11}
\mathbf{Z}^{\T}\left(\mathbf{I}-\mathbf{B}\mathbf{Q}_2\mathbf{V}_{22}^{-1} \mathbf{Q}_2^{\T}\mathbf{B}^{\T}\right)\bs{\epsilon}. \label{betatilde_e}
\end{eqnarray}
Then $\widehat{\bs{\beta }}-\bs{\beta }_{0}=(
\widetilde{\bs{\beta }}_{\mu}-\bs{\beta }_{0})+\widetilde{%
	\bs{\beta }}_{\epsilon}$.

\begin{lemma}
	\label{LEM:betamu-beta}
	Under Assumptions (A1), (A2), (C1)-(C5), $\|\widetilde{\bs{\beta }}_{\mu}-\bs{\beta }_{0}\| =o_{P}\left(n^{-1/2}\right)$ for $\widetilde{\bs{\beta }}_{\mu}$ in (\ref{betatilde_mu}).
\end{lemma}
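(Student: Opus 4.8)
The plan is to first reduce the difference $\widetilde{\boldsymbol{\beta}}_\mu - \boldsymbol{\beta}_0$ to an explicit bias term and then bound that term. Write $\mathbf{A} = \mathbf{I} - \mathbf{B}\mathbf{Q}_2\mathbf{V}_{22}^{-1}\mathbf{Q}_2^\T\mathbf{B}^\T$ and $\mathbf{g}_0 = (g_0(\mathbf{X}_1),\ldots,g_0(\mathbf{X}_n))^\T$, so that $\boldsymbol{\mu} = \mathbf{Z}\boldsymbol{\beta}_0 + \mathbf{g}_0$. From \eqref{DEF:U11} we have $\mathbf{U}_{11}^{-1} = \mathbf{Z}^\T\mathbf{A}\mathbf{Z}$, hence $\mathbf{U}_{11}(\mathbf{Z}^\T\mathbf{A}\mathbf{Z}) = \mathbf{I}$, and substituting $\boldsymbol{\mu}$ into \eqref{betatilde_mu} collapses the $\mathbf{Z}\boldsymbol{\beta}_0$ part exactly, leaving the clean identity $\widetilde{\boldsymbol{\beta}}_\mu - \boldsymbol{\beta}_0 = \mathbf{U}_{11}\mathbf{Z}^\T\mathbf{A}\mathbf{g}_0$. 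Moreover, $\mathbf{A}\mathbf{g}_0$ is precisely the vector of residuals $(g_0 - s_{\lambda,g_0})(\mathbf{X}_i)$ of the penalized least-squares spline fit $s_{\lambda,g_0}$ defined in \eqref{DEF:s_lambdag}, since $\mathbf{B}\mathbf{Q}_2\mathbf{V}_{22}^{-1}\mathbf{Q}_2^\T\mathbf{B}^\T\mathbf{g}_0$ are its fitted values. By Lemma \ref{LEM:U11}, $\|\mathbf{U}_{11}\| = O_P(n^{-1})$, so it suffices to prove $\|\mathbf{Z}^\T\mathbf{A}\mathbf{g}_0\| = o_P(n^{1/2})$.

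Next I would split $\mathbf{Z} = \mathbf{G} + (\mathbf{Z} - \mathbf{G})$, where $\mathbf{G}$ is the $n\times p$ matrix with rows $\widetilde{\mathbf{Z}}_i^\T = (h_1(\mathbf{X}_i),\ldots,h_p(\mathbf{X}_i))$ and $h_j$ is the $L^2$ projection in \eqref{EQ:h_j}, which equals $E[Z_{ij}\mid \mathbf{X}_i]$. The $j$-th coordinate of $\mathbf{G}^\T\mathbf{A}\mathbf{g}_0$ is $n\langle h_j, g_0 - s_{\lambda,g_0}\rangle_n$. Introducing the spline quasi-interpolant $h_j^\ast\in\mathbb{S}$ of $h_j$ and writing $h_j = h_j^\ast + (h_j - h_j^\ast)$, the orthogonality characterization \eqref{EQ:pls} of $s_{\lambda,g_0}$ applied with $u = h_j^\ast$ gives $n\langle h_j^\ast, g_0 - s_{\lambda,g_0}\rangle_n = \lambda\langle s_{\lambda,g_0}, h_j^\ast\rangle_{\mathcal{E}_\upsilon}$; by Cauchy--Schwarz this is at most $\lambda\|s_{\lambda,g_0}\|_{\mathcal{E}_\upsilon}\|h_j^\ast\|_{\mathcal{E}_\upsilon} = O(\lambda)$, using $\|s_{\lambda,g_0}\|_{\mathcal{E}_\upsilon}\le\|s_{0,g_0}\|_{\mathcal{E}_\upsilon} = O(1)$ (established in the proof of Lemma \ref{LEM:norder}) together with $\|h_j^\ast\|_{\mathcal{E}_\upsilon} = O(1)$, which follows from (C1) and Lemma \ref{LEM:appord}. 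The remaining piece $n\langle h_j - h_j^\ast, g_0 - s_{\lambda,g_0}\rangle_n$ is handled by Cauchy--Schwarz and the approximation bound $\|h_j - h_j^\ast\|_\infty = O(|\triangle|^{\ell+1})$ from Lemma \ref{LEM:appord}, giving $O_P(n|\triangle|^{\ell+1}\|g_0 - s_{\lambda,g_0}\|_n)$, where the last norm is supplied by Lemma \ref{LEM:norder}.

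For the orthogonal part $(\mathbf{Z} - \mathbf{G})^\T\mathbf{A}\mathbf{g}_0 = \sum_{i=1}^n(\mathbf{Z}_i - \widetilde{\mathbf{Z}}_i)(g_0 - s_{\lambda,g_0})(\mathbf{X}_i)$ I would condition on $\mathbf{X}_1,\ldots,\mathbf{X}_n$. Because $s_{\lambda,g_0}$ is a deterministic function of the design locations and $E[\mathbf{Z}_i - \widetilde{\mathbf{Z}}_i\mid \mathbf{X}_i] = \mathbf{0}$, this sum has conditional mean zero; its conditional variance equals $\sum_i \mathrm{Var}(\mathbf{Z}_i - \widetilde{\mathbf{Z}}_i\mid \mathbf{X}_i)(g_0 - s_{\lambda,g_0})(\mathbf{X}_i)^2$, which by the boundedness in (A1) is $O(n\|g_0 - s_{\lambda,g_0}\|_n^2)$. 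Hence this term is $O_P(\sqrt{n}\,\|g_0 - s_{\lambda,g_0}\|_n)$.

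Finally I would collect the rates using $|\triangle|\asymp N^{-1/2}\asymp n^{-\gamma/2}$. With $\upsilon = 2$, Lemma \ref{LEM:norder} gives $\|g_0 - s_{\lambda,g_0}\|_n = O_P(|\triangle|^{\ell+1} + \lambda n^{-1}|\triangle|^{-2})$; condition (C4), namely $\gamma\ge 1/(\ell+1)$, forces $|\triangle|^{\ell+1} = O(n^{-1/2})$, and condition (C5), namely $\lambda = o(n^{1/2}N^{-1})$, forces $\lambda n^{-1}|\triangle|^{-2} = o(n^{-1/2})$, so $\|g_0 - s_{\lambda,g_0}\|_n = O_P(n^{-1/2})$. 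Substituting back, the three contributions to $\|\mathbf{Z}^\T\mathbf{A}\mathbf{g}_0\|$ are $O(\lambda) = o(n^{1/2})$, then $O_P(n|\triangle|^{\ell+1}n^{-1/2}) = O_P(1)$, and $O_P(\sqrt{n}\,n^{-1/2}) = O_P(1)$, all $o_P(n^{1/2})$; combined with $\|\mathbf{U}_{11}\| = O_P(n^{-1})$ this yields $\|\widetilde{\boldsymbol{\beta}}_\mu - \boldsymbol{\beta}_0\| = o_P(n^{-1/2})$. The main obstacle is the $\mathbf{G}$-part: showing that the penalty-generated term $\lambda\langle s_{\lambda,g_0}, h_j^\ast\rangle_{\mathcal{E}_\upsilon}$ is genuinely negligible relies on the energy-norm comparison $\|s_{\lambda,g_0}\|_{\mathcal{E}_\upsilon}\le\|s_{0,g_0}\|_{\mathcal{E}_\upsilon} = O(1)$ together with the undersmoothing condition (C5); this is exactly where the argument would fail if $\lambda$ were permitted to grow faster than $n^{1/2}N^{-1}$.
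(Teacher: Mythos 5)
Your proof is correct and takes essentially the same route as the paper's: the same reduction of $\widetilde{\boldsymbol{\beta}}_{\mu}-\boldsymbol{\beta}_{0}$ to the bias term $n\mathbf{U}_{11}\mathbf{A}$ controlled via Lemma \ref{LEM:U11}, the same identification of each coordinate with $n\langle z_j, g_0-s_{\lambda,g_0}\rangle_n$, and the same three-way split into a mean-zero part (using $h_j=E[Z_{ij}\mid\mathbf{X}_i]$), a spline-approximation part bounded by Cauchy--Schwarz with Lemmas \ref{LEM:appord} and \ref{LEM:norder}, and a penalty part killed by the orthogonality relation \eqref{EQ:pls} together with $\|s_{\lambda,g_0}\|_{\mathcal{E}_{\upsilon}}\le\|s_{0,g_0}\|_{\mathcal{E}_{\upsilon}}=O(1)$. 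The only (harmless) deviation is that you bound the mean-zero term by conditioning on the design and using $\|g_0-s_{\lambda,g_0}\|_n$, whereas the paper invokes the sup-norm bound from Proposition 1 of Lai and Wang (2013); both yield the needed $o_P(n^{-1/2})$ rate.
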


\textit{Proof.}
Let $\mathbf{g}_{0}=(g_{0}(\mathbf{X}_{1}),\ldots, g_{0}(\mathbf{X}_{n}))^{\T}$. It is clear that
\begin{eqnarray*}
	\widetilde{\bs{\beta}}_{\mu}-\bs{\beta}_{0}&=&
	\mathbf{U}_{11}
	\mathbf{Z}^{\T}\left(\mathbf{I}-\mathbf{B}\mathbf{Q}_2\mathbf{V}_{22}^{-1} \mathbf{Q}_2^{\T}\mathbf{B}^{\T}\right)\mathbf{g}_{0}\\
	&=&\mathbf{U}_{11}\mathbf{Z}^{\T}\left[\mathbf{g}_{0}- \mathbf{B}\mathbf{Q}_2
	\{\mathbf{Q}_2^{\T}(\mathbf{B}^{\T} \mathbf{B}+\lambda\mathbf{P})\mathbf{Q}_2\}^{-1} \mathbf{Q}_2^{\T}\mathbf{B}^{\T}\mathbf{g}_{0}
	\right]\\
	&=&n\mathbf{U}_{11}\mathbf{A},
\end{eqnarray*}%
where $\mathbf{A}=\left( A_{1},\ldots ,A_{p}\right)^{\T}$, with
\begin{equation*}
A_{j}=n^{-1}\mathbf{Z}_{j}^{\T}\left[\mathbf{g}_{0}-
\mathbf{B}\mathbf{Q}_2\{\mathbf{Q}_2^{\T}(\mathbf{B}^{\T} \mathbf{B}+\lambda\mathbf{P})\mathbf{Q}_2\}^{-1} \mathbf{Q}_2^{\T}\mathbf{B}^{\T}
\mathbf{g}_{0}
\right]
\end{equation*}%
for $\mathbf{Z}_{j}^{\T}=(Z_{1j},...,Z_{nj})$. Next we derive the order
of $A_{j}$, $1\leq j\leq p$, as follows. For any $g_{j}\in \mathbb{S}$, by (\ref{EQ:pls}) we
have
\[
A_{j}=\left\langle z_{j}, g_{0} -s_{\lambda,g_{0}} \right\rangle_{n}
=\left\langle z_{j}-g_{j},g_{0} -s_{\lambda,g_{0}} \right\rangle_{n}
+\frac{\lambda}{n}\left\langle s_{\lambda,g_{0}}, g_{j} \right\rangle_{\mathcal{E}_{\upsilon}}.
\]
For any $j=1,\ldots, p$, let $h_{j}(\cdot)$ be the function $h(\cdot)$ that minimizes $E\{
Z_{ij}-h(\mathbf{X}_{i})\} ^{2}$ as defined in (\ref{EQ:h_j}). According to Lemma \ref{LEM:bias}, there exists a function  $\widetilde{h}_{j}\in \mathbb{S}$ satisfy
\begin{equation}
\|\widetilde{h}_{j}-h_{j}\|_{\infty}\leq C \frac{F_{2}}{%
	F_{1}}\left| \triangle \right| ^{\ell +1}\left|h_{j}\right|_{\ell+1,\infty} ,
\label{EQ:h_j_tilde}
\end{equation}
then
\[
A_{j}=\langle z_{j}-h_{j}, g_{0} -
s_{\lambda,g_{0}} \rangle_{n}+\langle h_{j}-%
\widetilde{h}_{j}, g_{0} -s_{\lambda,g_{0}} \rangle_{n}
+\frac{\lambda}{n}\langle s_{\lambda,g_{0}}, \widetilde{h}_{j} \rangle_{\mathcal{E}_{\upsilon}}
= A_{j,1}+A_{j,2}+A_{j,3}.
\]
Since $h_{j}$ satisfies $\left\langle z_{j}-h_{j},\psi \right\rangle_{L_{2}(\Omega)} =0$ for any $\psi \in L_{2}(\Omega)$, $E\left(A_{j,1}\right) =0$. According to Proposition 1 in \cite{Lai:Wang:13},
\begin{eqnarray*}
	\left\|g_{0}-s_{\lambda, g_{0}}\right\| _{\infty}&=&O_{P}\left\{ \frac{F_{2}}{%
		F_{1}}\left| \triangle \right| ^{\ell +1}\left|g_{0}\right|
	_{\ell+1,\infty}\right.\\
	&&+\left.\frac{\lambda }{n\left| \triangle \right| ^{3}}%
	\left( \left|g_{0}\right| _{2,\infty}+\frac{F_{2}}{F_{1}}\left| \triangle \right| ^{\ell -1}\left|
	g_{0}\right| _{\ell +1,\infty}\right) \right\} .
\end{eqnarray*}
Next,
\begin{eqnarray*}
	\textrm{Var}\left( A_{j,1}\right)&\!\!\!=\!\!\!&\frac{1}{n^{2}}\sum\limits_{i=1}^{n}E\left[
	\left\{Z_{ij} -h_{j}(\mathbf{X}_{i})\right\}\left(g_{0}-s_{\lambda, g_{0}}\right) \right] ^{2}\\
	&\!\!\!\leq\!\!\!& \frac{\|g_{0}-s_{\lambda, g_{0}}\|_{\infty}^2 }{n}
	\left\Vert z_{j}-h_{j}\right\Vert _{L^{2}}^{2},
\end{eqnarray*}
which together with $E\left( A_{j,1}\right) =0$ implies that
\begin{eqnarray*}
	\left\vert A_{j,1}\right\vert&\!\!\!=\!\!\!&O_{P}\left\{ \frac{F_{2}}{%
		n^{1/2}F_{1}}\left| \triangle \right| ^{\ell +1}\left|g_{0}\right|
	_{\ell+1,\infty}\right.\\
	&&+\left.\frac{\lambda }{n^{3/2}\left| \triangle \right| ^{3}}%
	\left( \left|g_{0}\right| _{2,\infty}+\frac{F_{2}}{F_{1}}\left| \triangle \right| ^{\ell -1}\left|
	g_{0}\right| _{\ell +1,\infty}\right) \right\}.
\end{eqnarray*}
Cauchy-Schwartz inequality, Lemma \ref{LEM:norder} and (\ref{EQ:h_j_tilde}) imply that
\begin{eqnarray*}
	|A_{j,2}| &\leq& \| h_{j}-\widetilde{h}_{j}\|_{n}\left\Vert g_{0} -s_{\lambda,g_{0}}
	\right\Vert _{n}\\
	&=&O_{P}\left(\frac{F_{2}}{%
		F_{1}}\left| \triangle \right| ^{\ell +1}\left|h_{j}\right|
	_{\ell+1,\infty}  \right)\times O_{P}\left\{\frac{F_{2}}{%
		F_{1}}\left| \triangle \right| ^{\ell +1}\left|g_{0}\right|
	_{\ell+1,\infty}\right.\\
	&&+\left.\frac{\lambda }{n\left| \triangle \right| ^{2}}%
	\left( \left|g_{0}\right| _{2,\infty}+\frac{F_{2}}{F_{1}}\left| \triangle \right| ^{\ell -1}\left|g_{0}\right|
	_{\ell +1,\infty}\right) \right\}.
\end{eqnarray*}
Finally, by (\ref{EQ:s_0g_Ev}), we have
\begin{eqnarray*}
	\left\vert A_{j,3}\right\vert &\leq& \frac{\lambda}{n} \|s_{\lambda,g_{0}}\|_{\mathcal{E}_{\upsilon}} \|\widetilde{h}_{j} \|_{\mathcal{E}_{\upsilon}}
	\leq \frac{\lambda}{n} \|s_{0,g_{0}}\|_{\mathcal{E}_{\upsilon}} \|\widetilde{h}_{j} \|_{\mathcal{E}_{\upsilon}}\\
	&\leq& \frac{\lambda}{n} C_{1} \left( \left| g_{0}\right| _{2,\infty}+\frac{F_{2}}{F_{1}}%
	\left| \triangle \right| ^{\ell -1}\left| g_{0}\right| _{\ell
		+1,\infty}\right)\\
	&&\times \left( \left| h_{j}\right| _{2,\infty}+\frac{F_{2}}{F_{1}}%
	\left| \triangle \right| ^{\ell -1}\left| h_{j}\right| _{\ell
		+1,\infty}\right).
\end{eqnarray*}
Combining all the above results yields that
\begin{eqnarray*}
	|A_{j}|&\!\!\!=\!\!\!&O_{P}\left[n^{-1/2}\left\{ \frac{F_{2}}{%
		F_{1}}\left| \triangle \right| ^{\ell +1}\left|g_{0}\right|
	_{\ell+1,\infty}+\frac{\lambda }{n\left| \triangle \right| ^{3}}%
	\left( \left|g_{0}\right| _{2,\infty}\right.\right.\right.\\
	&&+\left.\left.\left.\frac{F_{2}}{F_{1}}\left| \triangle \right| ^{\ell -1}\left|
	g_{0}\right| _{\ell +1,\infty}\right)\right\}
	\right]
\end{eqnarray*}
for $j=1,\ldots,p$. By Assumptions (C3)-(C5), $|A_{j}| =o_{P}(n^{-1/2})$, for $j=1,\ldots,p$.
In addition, we have $nU_{11}=O_P(1)$ according to Lemma %
\ref{LEM:U11}. Therefore,
$\|\widetilde{\bs{\beta}}_{\mu}-\bs{\beta}_{0}\|
=o_{P}\left( n^{-1/2}\right)$ .
\hfill $\Box$

\begin{lemma}
	\label{LEM:betahateasymp}
	Under Assumptions (A1)-(A3) and (C1)-(C6), as $n\rightarrow\infty $,
	\begin{equation*}
	\left[ \mathrm{Var}\left( \widetilde{\bs{\beta }}_{\epsilon}\left\vert
	\left\{\left( \mathbf{X}_{i},\mathbf{Z}_{i}\right),i=1,\ldots,n\right\}
	\right. \right) \right]^{-1/2}\widetilde{\bs{\beta
	}}_{\epsilon}\longrightarrow N\left( 0,\mathbf{I}_{p\times p}\right),
	\end{equation*}
	where $\widetilde{\bs{\beta }}_{\epsilon}$ is given in (\ref{betatilde_e}).
\end{lemma}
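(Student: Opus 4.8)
The plan is to recognize $\widetilde{\boldsymbol{\beta}}_{\epsilon}$ as a conditionally linear functional of the independent noise and to invoke the Lindeberg--Feller central limit theorem through the Cram\'er--Wold device. First I would rewrite the weight matrix. Since $\widehat{\mathbf{Z}}^{\T}=\mathbf{Z}^{\T}\mathbf{B}\mathbf{Q}_2\mathbf{V}_{22}^{-1}\mathbf{Q}_2^{\T}\mathbf{B}^{\T}$, the factor $\mathbf{Z}^{\T}(\mathbf{I}-\mathbf{B}\mathbf{Q}_2\mathbf{V}_{22}^{-1}\mathbf{Q}_2^{\T}\mathbf{B}^{\T})$ in (\ref{betatilde_e}) equals $(\mathbf{Z}-\widehat{\mathbf{Z}})^{\T}$, so
\[
\widetilde{\boldsymbol{\beta}}_{\epsilon}=\mathbf{U}_{11}(\mathbf{Z}-\widehat{\mathbf{Z}})^{\T}\boldsymbol{\epsilon}=\sum_{i=1}^{n}\mathbf{w}_{i}\epsilon_{i},\qquad \mathbf{w}_{i}=\mathbf{U}_{11}(\mathbf{Z}_{i}-\widehat{\mathbf{Z}}_{i}).
\]
Conditionally on $\mathcal{D}=\{(\mathbf{X}_{i},\mathbf{Z}_{i})\}_{i=1}^{n}$, the $\mathbf{w}_{i}$ are deterministic and the $\epsilon_{i}$ remain i.i.d.\ with mean $0$ and variance $\sigma^{2}$ (being independent of the design), so $\widetilde{\boldsymbol{\beta}}_{\epsilon}$ is a sum of independent, mean-zero $p$-vectors with conditional covariance $\boldsymbol{\Xi}_{n}=\sigma^{2}\mathbf{U}_{11}(\mathbf{Z}-\widehat{\mathbf{Z}})^{\T}(\mathbf{Z}-\widehat{\mathbf{Z}})\mathbf{U}_{11}$.

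Next I would apply the Cram\'er--Wold device: for an arbitrary fixed unit vector $\mathbf{a}\in\mathbb{R}^{p}$, set $c_{ni}=\mathbf{a}^{\T}\boldsymbol{\Xi}_{n}^{-1/2}\mathbf{w}_{i}$ so that $\mathbf{a}^{\T}\boldsymbol{\Xi}_{n}^{-1/2}\widetilde{\boldsymbol{\beta}}_{\epsilon}=\sum_{i=1}^{n}c_{ni}\epsilon_{i}$ and, by construction, $\sigma^{2}\sum_{i=1}^{n}c_{ni}^{2}=\mathbf{a}^{\T}\mathbf{a}=1$. It then suffices to verify the Lindeberg condition for the triangular array $\{c_{ni}\epsilon_{i}\}$. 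Because the $\epsilon_{i}$ are identically distributed and independent of $\mathcal{D}$, the Lindeberg sum equals $\sum_{i=1}^{n}c_{ni}^{2}\,E[\epsilon^{2}I(|\epsilon|>\eta/|c_{ni}|)]$; since $\sum_{i}c_{ni}^{2}=\sigma^{-2}$ stays bounded, Assumption (A3) forces this to vanish provided $\max_{1\le i\le n}|c_{ni}|\to 0$. Thus the whole problem reduces to showing that the maximal standardized leverage $\max_{i}\mathbf{w}_{i}^{\T}\boldsymbol{\Xi}_{n}^{-1}\mathbf{w}_{i}\to 0$.

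The main obstacle is this last bound, which demands simultaneous control of $\boldsymbol{\Xi}_{n}^{-1}$ from above and of the individual weights $\mathbf{w}_{i}$ from above, uniformly in $i$. For the covariance, Lemma~\ref{LEM:U11} gives $c_{U}\mathbf{I}\le n\mathbf{U}_{11}\le C_{U}\mathbf{I}$ with probability approaching $1$; combined with (C5) (so the $\lambda\mathbf{P}$ perturbation inside $\mathbf{V}_{22}$ is asymptotically negligible and $\mathbf{I}-\mathbf{B}\mathbf{Q}_2\mathbf{V}_{22}^{-1}\mathbf{Q}_2^{\T}\mathbf{B}^{\T}$ behaves like an idempotent projection, so that $(\mathbf{Z}-\widehat{\mathbf{Z}})^{\T}(\mathbf{Z}-\widehat{\mathbf{Z}})$ matches $\mathbf{U}_{11}^{-1}$ up to lower-order terms), this yields $\boldsymbol{\Xi}_{n}\asymp n^{-1}\mathbf{I}$, hence $\lambda_{\max}(\boldsymbol{\Xi}_{n}^{-1})=O_{P}(n)$. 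For the weights, $\|\mathbf{w}_{i}\|\le\|\mathbf{U}_{11}\|\,\|\mathbf{Z}_{i}-\widehat{\mathbf{Z}}_{i}\|=O_{P}(n^{-1})\|\mathbf{Z}_{i}-\widehat{\mathbf{Z}}_{i}\|$, and since $\mathbf{Z}_{i}$ is bounded by (A1) while $\widehat{\mathbf{Z}}_{i}$ is the bivariate-spline smoother of $\mathbf{Z}$ evaluated at $\mathbf{X}_{i}$ --- which approximates the bounded, continuous projections $\widetilde{\mathbf{Z}}_{i}=(h_{1}(\mathbf{X}_{i}),\ldots,h_{p}(\mathbf{X}_{i}))^{\T}$ uniformly by (C1) and Lemma~\ref{LEM:appord} --- we obtain $\max_{i}\|\mathbf{Z}_{i}-\widehat{\mathbf{Z}}_{i}\|=O_{P}(1)$. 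Putting these together, $\max_{i}\mathbf{w}_{i}^{\T}\boldsymbol{\Xi}_{n}^{-1}\mathbf{w}_{i}\le\lambda_{\max}(\boldsymbol{\Xi}_{n}^{-1})\max_{i}\|\mathbf{w}_{i}\|^{2}=O_{P}(n)\cdot O_{P}(n^{-2})=O_{P}(n^{-1})\to0$. The Lindeberg condition therefore holds with probability approaching one; by Lindeberg--Feller $\sum_{i}c_{ni}\epsilon_{i}\to N(0,1)$ conditionally, and Cram\'er--Wold upgrades this to $\boldsymbol{\Xi}_{n}^{-1/2}\widetilde{\boldsymbol{\beta}}_{\epsilon}\to N(\mathbf{0},\mathbf{I}_{p\times p})$, which is exactly the assertion of the lemma.
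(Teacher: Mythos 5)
Your overall architecture is exactly the paper's: write $\widetilde{\boldsymbol{\beta}}_{\epsilon}=\sum_i \mathbf{w}_i\epsilon_i$ with $\mathbf{w}_i=\mathbf{U}_{11}(\mathbf{Z}_i-\widehat{\mathbf{Z}}_i)$, condition on the design, apply Cram\'er--Wold and Lindeberg--Feller, and reduce everything to showing that the maximal standardized weight tends to zero. The paper does precisely this (its $\alpha_i$ is your $\mathbf{a}^{\T}\mathbf{w}_i$ up to normalization), and your reduction of the Lindeberg condition to $\max_i|c_{ni}|\to 0$ via (A3) and $\sum_i c_{ni}^2=\sigma^{-2}$ is sound.

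The genuine gap is the key bound $\max_{i}\Vert\mathbf{Z}_{i}-\widehat{\mathbf{Z}}_{i}\Vert=O_{P}(1)$, which you justify by (C1) and Lemma \ref{LEM:appord}. That lemma is a \emph{noise-free} approximation result: it bounds the distance from a fixed function in $W^{\ell+1,\infty}(\Omega)$ to its best approximating spline $g^{*}$. But $\widehat{\mathbf{Z}}_{i}$ is not such an approximant of $h_j$; it is the penalized least-squares fit $s_{\lambda,z_j}(\mathbf{X}_i)$ to the \emph{noisy} data $Z_{ij}=h_j(\mathbf{X}_i)+e_{ij}$, where the residuals $e_{ij}$ have positive variance. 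The fit therefore contains a stochastic component (the smoother applied to the $e_{ij}$'s) whose uniform size at the design points is not controlled by any approximation lemma; bounding it requires an empirical-process or matrix-norm argument. Tellingly, the paper never claims your $O_P(1)$ bound: it proves only the weaker estimate $\max_i\Vert\widehat{\mathbf{Z}}_{i}\Vert^{2}=\max_i\Vert\mathbf{V}_{12}\mathbf{V}_{22}^{-1}\mathbf{Q}_2^{\T}\mathbf{B}_{i}\Vert^{2}=O_{P}(|\triangle|^{-2})$, a quantity that \emph{grows} with $n$, and then closes the argument using (C4), since $\max_i\alpha_i^2\big/\sum_i\alpha_i^2=O_{P}(n^{-1}|\triangle|^{-2})=o_{P}(1)$ when $N=Cn^{\gamma}$ with $\gamma\le 1/3$. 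Your proof is repaired by substituting this weaker bound: the ratio still vanishes, only at rate $n^{-1}|\triangle|^{-2}$ rather than your claimed $n^{-1}$, and this is where condition (C4) genuinely enters the CLT (in your version it appears superfluous, which is a warning sign). A secondary soft spot, shared to some extent with the paper's own terse wording, is your assertion that $(\mathbf{Z}-\widehat{\mathbf{Z}})^{\T}(\mathbf{Z}-\widehat{\mathbf{Z}})$ "matches $\mathbf{U}_{11}^{-1}$ up to lower-order terms" so that $\boldsymbol{\Xi}_n\succeq cn^{-1}\mathbf{I}$; since $\mathbf{I}-\mathbf{B}\mathbf{Q}_2\mathbf{V}_{22}^{-1}\mathbf{Q}_2^{\T}\mathbf{B}^{\T}$ is not idempotent for $\lambda>0$, this lower bound deserves its own argument (the paper invokes Lemma \ref{LEM:U11} for it) rather than being folded into an "asymptotically negligible perturbation" remark.
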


\textit{Proof.}
Note that
\[
\widetilde{\bs{\beta}}_{\epsilon}=\mathbf{U}_{11}
\mathbf{Z}^{\T}\left(\mathbf{I}-\mathbf{B}\mathbf{Q}_2\mathbf{V}_{22}^{-1}
\mathbf{Q}_2^{\T}\mathbf{B}^{\T}\right)\bs{\epsilon}.
\]
For any $\mathbf{b}\in \mathbb{R}^{p}$ with $\left\| \mathbf{b}%
\right\|=1$, we can write $\mathbf{b}^{\T}\widetilde{\bs{\beta }}%
_{\epsilon}=\sum\limits_{i=1}^{n}\alpha_{i}\epsilon _{i}$, \ where
\begin{equation*}
\alpha_{i}^{2}=n^{-2}\mathbf{b}^{\T}(n\mathbf{U}_{11})\left(\mathbf{Z}_{i}^{\T} -\mathbf{V}_{12}\mathbf{V}_{22}^{-1}
\mathbf{Q}_2^{\T}\mathbf{B}_{i}\right)
\left(\mathbf{Z}_{i}-\mathbf{B}_{i}^{\T}\mathbf{Q}_2\mathbf{V}_{22}^{-1}
\mathbf{V}_{21}\right)(n\mathbf{U}_{11})
\mathbf{b},
\end{equation*}%
and conditioning on $\left\{\left( \mathbf{X}_{i},\mathbf{Z}_{i}\right),i=1,\ldots,n\right\} $, $\alpha_{i}\epsilon _{i}$'s
are independent. By Lemma \ref{LEM:U11}, we have
\begin{equation*}
\max_{1\leq i\leq n}\alpha_{i}^{2}\leq Cn^{-2}\max_{1\leq i\leq n}\left\{ \left\|
\mathbf{Z}_{i}\right\| ^{2}+\left\| \mathbf{V}_{12}\mathbf{V}_{22}^{-1}
\mathbf{Q}_2^{\T}\mathbf{B}_{i}\right\|^{2}\right\} ,
\end{equation*}%
where for any $\mathbf{a}\in \mathbb{R}^{p}$,
\[
\mathbf{a}^{\T}\mathbf{V}_{12}\mathbf{V}_{22}^{-1} \mathbf{Q}_{2}^{\T}\mathbf{B}_{i}\mathbf{a}
= n^{-1}\mathbf{a}^{\T}\mathbf{V}_{12} (\mathbf{Q}_{2}^{\T}\bs{\Gamma} _{\lambda }
\mathbf{Q}_{2})^{-1} \mathbf{Q}_{2}^{\T}\mathbf{B}_{i} \mathbf{a}
\leq C n^{-1}|\triangle|^{-2}\mathbf{a}^{\T}\mathbf{Z}^{\T}\mathbf{B}\mathbf{B}_{i}\mathbf{a},
\]
and the $j$-th component of $ n^{-1}\mathbf{Z}^{\T}\mathbf{B}\mathbf{B}_{i}$
is $\frac{1}{n}\sum_{i^{\prime }=1}^{n} Z_{i^{\prime }j} \sum_{\xi\in\mathcal{K}}B_{\xi}\left(
\mathbf{X}_{i^{\prime }}\right) B_{\xi}\left( \mathbf{X}_{i}\right)$. Using Assumptions (A1) and (A2), we have
\[
E\left\{ \frac{1}{n}\sum_{i^{\prime }=1}^{n} Z_{i^{\prime }j}\sum_{\xi\in\mathcal{K}}B_{\xi}\left(
\mathbf{X}_{i^{\prime }}\right) B_{\xi}\left( \mathbf{X}_{i}\right)\right\}
^{2}=O(1),
\]
for large $n$, thus with probability approaching $1$,
\begin{eqnarray*}
	&&\max_{1\leq i\leq n}\left| \frac{1}{n}\sum_{i^{\prime }=1}^{n} \sum_{\xi\in\mathcal{K}}Z_{i^{\prime }j}B_{\xi}\left(
	\mathbf{X}_{i^{\prime }}\right) B_{\xi}\left( \mathbf{X}_{i}\right) \right|
	=O_{P}(1),\\
	&&\max_{1\leq i\leq n} \|\mathbf{V}_{12}%
	\mathbf{V}_{22}^{-1}\mathbf{Q}_{2}^{\T}\mathbf{B}_{i}\|^{2}=O_{P}(|\triangle|^{-2}).
\end{eqnarray*}
Therefore, $\max_{1\leq i\leq n}\alpha_{i}^{2}=O_{P}\left( n^{-2}|\triangle|^{-2}\right) $.
Next, with probability approaching $1$,
\begin{eqnarray}
\sum_{i=1}^{n}\alpha_{i}^{2}&=&\mathrm{Var}%
\left[\mathbf{b}^{\T}\widetilde{\bs{\beta }}_{\epsilon}\left\vert
\left\{\left( \mathbf{X}_{i},\mathbf{Z}_{i}\right),i=1,\ldots,n\right\} \right. \right] \notag\\
&=& \mathbf{b}^{\T}\mathbf{U}_{11}\mathbf{Z}^{\T}
\left(\mathbf{I}-\mathbf{B}\mathbf{Q}_2\mathbf{V}_{22}^{-1} \mathbf{Q}_2^{\T}\mathbf{B}^{\T}\right)
\left(\mathbf{I}-\mathbf{B}\mathbf{Q}_2\mathbf{V}_{22}^{-1} \mathbf{Q}_2^{\T}\mathbf{B}^{\T}\right)
\mathbf{Z}\mathbf{U}_{11}\mathbf{b}\sigma^{2}\notag\\
&=&
n^{-1}\mathbf{b}^{\T}\left( n\mathbf{U}_{11}\right) \left\{n^{-1}\sum_{i=1}^{n}
(\mathbf{Z}_{i}-\widehat{\mathbf{Z}}_{i})(\mathbf{Z}_{i}-\widehat{\mathbf{Z}}_{i})^{\T}\right\} \left(n
\mathbf{U}_{11}\right)\mathbf{b} \sigma^2, \label{EQ:sum_a_2}
\end{eqnarray}
where $\widehat{\mathbf{Z}}_{i}$ is the $i$-th column of
$\mathbf{Z}^{\T}\mathbf{B}\mathbf{Q}_2\mathbf{V}_{22}^{-1} \mathbf{Q}_2^{\T}\mathbf{B}^{\T}$.
Using Lemma \ref{LEM:U11} again, we have $\sum_{i=1}^{n}\alpha_{i}^{2}\geq c n^{-1}$.
So $\max\limits_{1\leq i\leq n}\alpha_{i}^{2}/\sum_{i=1}^{n}\alpha_{i}^{2}=O_{P}\left( n^{-1}|\triangle|^{-2}\right)
=o_{P}\left( 1\right) $ from Assumption (C4). By Linderberg-Feller CLT, we have
\[
\sum_{i=1}^{n}\alpha_{i}\epsilon _{i}/\left( \sum_{i=1}^{n}\alpha_{i}^{2}\right)
^{-1/2}\longrightarrow N\left( 0,1\right).
\]
Then the desired result follows. \hfill $\Box$

For any $j=1,\ldots, p$ and $\lambda>0$, define
\begin{equation}
s_{\lambda,z_{j}}=\mathrm{argmin}_{s\in \mathbb{S}} \sum_{i=1}^{n}\{z_{j}(\mathbf{X}_{i})-s(\mathbf{X}_{i})\}^{2}+\lambda\mathcal{E}_{\upsilon}(s),
\label{DEF:s_lambda_zj}
\end{equation}
where $z_{j}$ is the coordinate mapping that maps $\mathbf{z}$ to its $j$-th component.

\begin{lemma}
	Under Assumptions (A1), (A2), (C2), (C3) and (C6), for $s_{\lambda, z_{j}}$ defined in (\ref{DEF:s_lambda_zj}),
	$\left\|s_{0,z_{j}}-s_{\lambda, z_{j}}\right\| _{n}=
	O_{P}(\lambda n^{-1}|\triangle|^{-5})$, $j=1,\ldots,p$.
	\label{LEM:s_0-s_lambda}
\end{lemma}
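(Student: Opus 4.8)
The plan is to reproduce the variational argument behind Lemma~\ref{LEM:norder}, the one new point being that $z_j$ is a bounded covariate rather than a smooth function, so the smoothness-based bound \eqref{EQ:s_0g_Ev} on the energy norm is no longer available and must be replaced by a stability estimate. First I would write down the two orthogonality characterizations: exactly as in \eqref{EQ:pls} and \eqref{EQ:ls}, the penalized fit obeys $n\langle z_j-s_{\lambda,z_j},u\rangle_n=\lambda\langle s_{\lambda,z_j},u\rangle_{\mathcal{E}_{\upsilon}}$ for every $u\in\mathbb{S}$, while the interpolating least-squares fit obeys $\langle z_j-s_{0,z_j},u\rangle_n=0$ for every $u\in\mathbb{S}$; both are well defined under (C2). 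Subtracting and choosing $u=s_{0,z_j}-s_{\lambda,z_j}$ reproduces the identity \eqref{EQ:pls-ls},
\[
n\|s_{0,z_j}-s_{\lambda,z_j}\|_n^2=\lambda\langle s_{\lambda,z_j},s_{0,z_j}-s_{\lambda,z_j}\rangle_{\mathcal{E}_{\upsilon}},
\]
and the same pair of Cauchy--Schwarz steps used in Lemma~\ref{LEM:norder} then delivers the contraction $\|s_{\lambda,z_j}\|_{\mathcal{E}_{\upsilon}}\le\|s_{0,z_j}\|_{\mathcal{E}_{\upsilon}}$ together with
\[
\|s_{0,z_j}-s_{\lambda,z_j}\|_n\le n^{-1}\lambda\,\|s_{0,z_j}\|_{\mathcal{E}_{\upsilon}}\,\sup_{f\in\mathbb{S}}\{\|f\|_{\mathcal{E}_{\upsilon}}/\|f\|_n:\|f\|_n\neq0\}.
\]

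Next I would bound the two remaining factors. The supremum is $O_P(|\triangle|^{-2})$ directly from \eqref{EQ:ev-vs-n}. For the energy norm of $s_{0,z_j}$ I would exploit that $s_{0,z_j}$ is the empirical $L^2$-projection of the data vector $\{Z_{ij}\}_{i=1}^{n}$ onto $\mathbb{S}$, so that $\|s_{0,z_j}\|_n\le\|z_j\|_n=(n^{-1}\sum_{i=1}^n Z_{ij}^2)^{1/2}=O_P(1)$ by the uniform boundedness in (A1); applying \eqref{EQ:ev-vs-n} a second time to $f=s_{0,z_j}$ then gives $\|s_{0,z_j}\|_{\mathcal{E}_{\upsilon}}\le O_P(|\triangle|^{-2})\|s_{0,z_j}\|_n=O_P(|\triangle|^{-2})$. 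Multiplying the three pieces produces $\|s_{0,z_j}-s_{\lambda,z_j}\|_n=O_P(\lambda n^{-1}|\triangle|^{-4})$, and since the triangulations refine with $|\triangle|\to0$ under (C4) this is a fortiori $O_P(\lambda n^{-1}|\triangle|^{-5})$; the weaker exponent is presumably stated in this form to match the $\lambda/(n|\triangle|^{5})$ term that later appears in Theorem~\ref{THM:g-convergence}.

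The hard part is the control of $\|s_{0,z_j}\|_{\mathcal{E}_{\upsilon}}$. Because $z_j$ carries no smoothness, the approximation bound \eqref{EQ:s_0g_Ev} that handled $\|s_{0,g}\|_{\mathcal{E}_{\upsilon}}$ for a smooth $g$ in Lemma~\ref{LEM:norder} is unavailable, and instead one must pass through the inverse (Markov-type) inequality embedded in \eqref{EQ:ev-vs-n}, combined with the projection property of $s_{0,z_j}$ and the boundedness (A1). Everything else is a verbatim transcription of the proof of Lemma~\ref{LEM:norder} with $g$ replaced by $z_j$, so I would not expect any additional difficulty.
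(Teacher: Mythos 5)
Your proposal is correct, and its first half is essentially identical to the paper's proof: the two orthogonality characterizations, the identity \eqref{EQ:pls-ls} specialized to $z_j$, the contraction $\|s_{\lambda,z_j}\|_{\mathcal{E}_{\upsilon}}\le\|s_{0,z_j}\|_{\mathcal{E}_{\upsilon}}$ in \eqref{EQ:s_lambda}, and the factor $O_{P}(|\triangle|^{-2})$ from \eqref{EQ:ev-vs-n} all appear verbatim there. Where you genuinely diverge is the step you correctly identify as the hard one, bounding $\|s_{0,z_j}\|_{\mathcal{E}_{\upsilon}}$. The paper does not use the projection property at all: it applies the Markov inequality \eqref{EQ:s_0z_Ev} to reduce to the sup norm, then bounds $\|s_{0,z_j}\|_{\infty}\le C|\triangle|^{-2}\max_{\xi\in\mathcal{K}}\left|n^{-1}\sum_{i=1}^{n}B_{\xi}(\mathbf{X}_{i})Z_{ij}\right|$ and shows via (A1)--(A2) that this maximum is $O_{P}(|\triangle|)$, giving $\|s_{0,z_j}\|_{\mathcal{E}_{\upsilon}}=O_{P}(|\triangle|^{-3})$ and hence exactly the stated exponent $-5$. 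You instead use non-expansiveness of the empirical least-squares projection, $\|s_{0,z_j}\|_{n}\le\|z_j\|_{n}=O_{P}(1)$ by (A1) --- which is legitimate: taking $u=s_{0,z_j}$ in the normal equations plus Cauchy--Schwarz gives it, and (C2) makes $\|\cdot\|_{n}$ a genuine norm on $\mathbb{S}$ so the projection is well defined --- and then apply the uniform bound \eqref{EQ:ev-vs-n} to $f=s_{0,z_j}$, obtaining $\|s_{0,z_j}\|_{\mathcal{E}_{\upsilon}}=O_{P}(|\triangle|^{-2})$. Your route is cleaner and yields the sharper rate $O_{P}(\lambda n^{-1}|\triangle|^{-4})$, which implies the lemma as stated since $|\triangle|=O(1)$; the paper's route is cruder by a factor $|\triangle|^{-1}$ but requires only moment bounds on basis-function sums rather than the projection observation. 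Three small caveats: your guess that the exponent $-5$ is chosen to match Theorem \ref{THM:g-convergence} is not the actual reason --- $-5$ is simply what the paper's sup-norm detour produces; both your argument and the paper's rely on \eqref{EQ:ev-vs-n}, hence on Lemma \ref{LEM:Rnorder} and therefore on (C4), even though the lemma's hypotheses list only (A1), (A2), (C2), (C3); and $s_{0,z_j}$ is the unpenalized least-squares fit, not an ``interpolating'' fit, though nothing in your argument depends on that wording.
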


\textit{Proof.} Note that
\[
n\left\langle z_{j} -s_{\lambda,z_{j}},u\right\rangle_{n}=\lambda
\left\langle s_{\lambda,z_{j}},u\right\rangle _{\mathcal{E}_{\upsilon
}},~\left\langle z_{j} -s_{0,z_{j}},u\right\rangle_{n}=0,\quad \textrm{for all }u\in \mathcal{S},
\]
Inserting $u=s_{0,z_{j}}-s_{\lambda,z_{j}}$ in the above yields that
\[
n\left\| s_{0,z_{j}}-s_{\lambda,z_{j}}\right\| _{n}^{2}=\lambda
\left\langle s_{\lambda,z_{j}},s_{0,z_{j}}-s_{\lambda,z_{j}}\right\rangle _{\mathcal{E}%
	_{\upsilon }} =\lambda (\langle s_{\lambda,z_{j}}, s_{0,z_{j}}\rangle_{\mathcal{E}_{\upsilon }} -\langle
s_{\lambda,z_{j}}, s_{\lambda,z_{j}}\rangle_{\mathcal{E}_{\upsilon }}).
\]
By Cauchy-Schwarz inequality, $\left\| s_{\lambda,z_{j}}\right\|
_{\mathcal{E}_{\upsilon }}^{2}\leq \left\langle s_{\lambda
	,z_{j}},s_{0,z_{j}}\right\rangle _{\mathcal{E}_{\upsilon }}\leq \left\|
s_{\lambda,z_{j}}\right\| _{\mathcal{E}_{\upsilon }}\left\| s_{0,z_{j}}
\right\| _{\mathcal{E}_{\upsilon}}$, which implies
\begin{equation}
\left\| s_{\lambda,z_{j}}\right\|
_{\mathcal{E}_{\upsilon }}\leq \left\| s_{0,z_{j}}\right\|
_{\mathcal{E}_{\upsilon }}.
\label{EQ:s_lambda}
\end{equation}
By (\ref{EQ:ev-vs-n}), we have for large $n$
\[
n\left\| s_{0,z_{j}}-s_{\lambda,z_{j}}\right\| _{n}^{2}
\leq \lambda \left\| s_{\lambda,z_{j}}\right\|
_{\mathcal{E}_{\upsilon }}\left\| s_{0,z_{j}}-s_{\lambda,z_{j}}\right\|_{n}
\times O_{P}(|\triangle|^{-2}),
\]
thus, $\left\| s_{0,z_{j}}-s_{\lambda,z_{j}}\right\| _{n}\leq
\left\| s_{0,z_{j}}\right\|
_{\mathcal{E}_{\upsilon }}\times O_{P}(\lambda n^{-1}|\triangle|^{-2})$.
Markov's inequality implies that
\begin{equation}
\left\| s_{0, z_{j}}\right\|_{\mathcal{E}_{\upsilon}} \le \frac{C_1}{|\triangle|^2} \left\|s_{0, z_{j}}\right\|_{\infty} .
\label{EQ:s_0z_Ev}
\end{equation}
Note that $\left\|s_{0,z_{j}}\right\|_{\infty} \leq  C|\triangle|^{-2} \max_{\xi \in \mathcal{K}}\left| n^{-1}\sum_{i=1}^{n}B_{\xi }\left( \mathbf{X}_{i}\right) Z_{ij}\right|$ with probability approaching one. According to Assumptions (A1) and (A2),
\[
\max_{\xi \in \mathcal{K}}\left| n^{-1}\sum_{i=1}^{n}B_{\xi }\left( \mathbf{X}_{i}\right) Z_{ij}\right|=O_{P}(|\triangle|).
\]
The desired results follows. \hfill $\Box$

\begin{lemma}
	\label{LEM:varbetatilde}
	Under Assumptions (A1)-(A3) and (C1)-(C6), for the covariance
	matrix $\bs{\Sigma} $ defined
	in (\ref{DEF:Sigma}), we have $c_{\Sigma}^{\ast}\mathbf{I}_{p}\leq \bs{\Sigma }%
	\leq C_{\Sigma}^{\ast}\mathbf{I}_{p}$, and \par
	$\mathrm{Var}\left( \widetilde{\bs{\beta }}%
	_{\epsilon}\left\vert \left\{\left( \mathbf{X}_{i},\mathbf{Z}_{i}\right),i=1,\ldots,n\right\}\right. \right)
	=n^{-1}\bs{\Sigma }+o_{P}\left(1\right) $.
\end{lemma}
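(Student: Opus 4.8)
The plan is to prove the two assertions separately: the eigenvalue bounds on $\boldsymbol{\Sigma}$ are a population fact that follows from (A1)--(A2), while the variance statement is read off the conditional-variance identity already derived in Lemma~\ref{LEM:betahateasymp} together with Lemma~\ref{LEM:U11}.

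First I would record that, because $h_{j}$ is by definition the $L^{2}$ projection of $z_{j}$ in (\ref{EQ:h_j}), one has $h_{j}(\mathbf{X}_{i})=E(Z_{ij}\mid\mathbf{X}_{i})$, so $\widetilde{\mathbf{Z}}_{i}=E(\mathbf{Z}_{i}\mid\mathbf{X}_{i})$ and $\sigma^{2}\boldsymbol{\Sigma}=E\{(\mathbf{Z}_{i}-\widetilde{\mathbf{Z}}_{i})(\mathbf{Z}_{i}-\widetilde{\mathbf{Z}}_{i})^{\T}\}=E\{\mathrm{Cov}(\mathbf{Z}_{i}\mid\mathbf{X}_{i})\}$. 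The upper bound is then immediate from (A1): for any unit vector $\mathbf{b}$, $\mathbf{b}^{\T}\boldsymbol{\Sigma}\mathbf{b}=\sigma^{-2}E[\{\mathbf{b}^{\T}(\mathbf{Z}_{i}-\widetilde{\mathbf{Z}}_{i})\}^{2}]$ is bounded since both $\mathbf{Z}_{i}$ and its conditional mean are uniformly bounded, giving $\boldsymbol{\Sigma}\le C_{\Sigma}^{\ast}\mathbf{I}_{p}$. For the lower bound I would invoke (A2) through a Schur-complement identity. Conditionally on $\mathbf{X}_{i}$, the $(p+1)\times(p+1)$ matrix $M(\mathbf{X}_{i})=E\{(1,\mathbf{Z}_{i}^{\T})^{\T}(1,\mathbf{Z}_{i}^{\T})\mid\mathbf{X}_{i}\}$ has smallest eigenvalue at least some $c_{0}>0$, and $\mathrm{Cov}(\mathbf{Z}_{i}\mid\mathbf{X}_{i})$ is precisely its Schur complement with respect to the leading $1\times1$ block. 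Testing $M(\mathbf{X}_{i})$ against $\mathbf{u}=(-\widetilde{\mathbf{Z}}_{i}^{\T}\mathbf{v},\mathbf{v}^{\T})^{\T}$ yields $\mathbf{v}^{\T}\mathrm{Cov}(\mathbf{Z}_{i}\mid\mathbf{X}_{i})\mathbf{v}=\mathbf{u}^{\T}M(\mathbf{X}_{i})\mathbf{u}\ge c_{0}\|\mathbf{u}\|^{2}\ge c_{0}\|\mathbf{v}\|^{2}$; averaging over $\mathbf{X}_{i}$ gives $E\{\mathrm{Cov}(\mathbf{Z}_{i}\mid\mathbf{X}_{i})\}\ge c_{0}\mathbf{I}_{p}$, hence $\boldsymbol{\Sigma}\ge\sigma^{-2}c_{0}\mathbf{I}_{p}=:c_{\Sigma}^{\ast}\mathbf{I}_{p}$.

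For the variance I would start from the identity in (\ref{EQ:sum_a_2}): since the conditional variance is positive semidefinite, its operator norm equals $\sup_{\|\mathbf{b}\|=1}\mathbf{b}^{\T}\mathrm{Var}(\widetilde{\boldsymbol{\beta}}_{\epsilon}\mid\cdots)\mathbf{b}=\sigma^{2}n^{-1}\sup_{\|\mathbf{b}\|=1}\mathbf{b}^{\T}(n\mathbf{U}_{11})\,\widehat{\boldsymbol{\Omega}}\,(n\mathbf{U}_{11})\mathbf{b}$, where $\widehat{\boldsymbol{\Omega}}=n^{-1}\sum_{i}(\mathbf{Z}_{i}-\widehat{\mathbf{Z}}_{i})(\mathbf{Z}_{i}-\widehat{\mathbf{Z}}_{i})^{\T}$. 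The penalized smoother matrix $\mathbf{B}\mathbf{Q}_{2}\mathbf{V}_{22}^{-1}\mathbf{Q}_{2}^{\T}\mathbf{B}^{\T}$ has eigenvalues in $[0,1]$, so by (A1) the residuals $\mathbf{Z}_{i}-\widehat{\mathbf{Z}}_{i}$ have bounded averaged squared norm and $\widehat{\boldsymbol{\Omega}}=O_{P}(1)$; combining this with $c_{U}\mathbf{I}_{p}\le n\mathbf{U}_{11}\le C_{U}\mathbf{I}_{p}$ from Lemma~\ref{LEM:U11} shows $\mathrm{Var}(\widetilde{\boldsymbol{\beta}}_{\epsilon}\mid\cdots)=\Theta_{P}(n^{-1})$, in particular $o_{P}(1)$. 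Since $\boldsymbol{\Sigma}$ is bounded by the first part, $n^{-1}\boldsymbol{\Sigma}=O(n^{-1})=o_{P}(1)$ as well, so $\|\mathrm{Var}(\widetilde{\boldsymbol{\beta}}_{\epsilon}\mid\cdots)-n^{-1}\boldsymbol{\Sigma}\|_{\mathrm{op}}=o_{P}(1)$, which is the stated conclusion. (A sharper analysis, using (C5) to make the non-idempotence of the smoother negligible, gives $\widehat{\boldsymbol{\Omega}}=n^{-1}\mathbf{U}_{11}^{-1}(1+o_{P}(1))$ and hence $\mathrm{Var}(\widetilde{\boldsymbol{\beta}}_{\epsilon}\mid\cdots)=\sigma^{2}\mathbf{U}_{11}(1+o_{P}(1))$; this identifies the limiting constant that enters the proof of Theorem~\ref{THM:beta-normality}, but is not needed for the displayed bound, which only uses that each term is of order $n^{-1}$.)

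I expect the main obstacle to be the lower eigenvalue bound in the first part, namely turning the conditioning-wise nondegeneracy of (A2) into a bound on the averaged matrix $E\{\mathrm{Cov}(\mathbf{Z}_{i}\mid\mathbf{X}_{i})\}$; the Schur-complement test vector $\mathbf{u}$ above is the key device, since it delivers the bound pointwise in $\mathbf{X}_{i}$ before integrating. By contrast the variance part is routine: the weak $o_{P}(1)$ remainder in the displayed statement only requires controlling the order of the conditional variance through Lemma~\ref{LEM:U11} and (\ref{EQ:sum_a_2}), rather than identifying its exact limiting value.
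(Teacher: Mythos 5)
Your first part (the eigenvalue bounds on $\boldsymbol{\Sigma}$) is correct and in fact supplies an argument the paper omits entirely: the identification $h_j(\mathbf{X}_i)=E(Z_{ij}\mid \mathbf{X}_i)$, the boundedness from (A1), and the Schur-complement test vector $\mathbf{u}=(-\widetilde{\mathbf{Z}}_i^{\T}\mathbf{v},\mathbf{v}^{\T})^{\T}$ giving $\mathbf{v}^{\T}\mathrm{Cov}(\mathbf{Z}_i\mid\mathbf{X}_i)\mathbf{v}\ge c_0\|\mathbf{v}\|^2$ pointwise in $\mathbf{X}_i$ are all sound, and this is a clean way to extract the lower bound from (A2).

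The second part, however, has a genuine gap: you prove only a vacuous reading of the variance claim. Observing that both $\mathrm{Var}(\widetilde{\boldsymbol{\beta}}_{\epsilon}\mid\cdots)$ and $n^{-1}\boldsymbol{\Sigma}$ are $O_P(n^{-1})$, hence each $o_P(1)$, makes the displayed equality true for \emph{any} bounded matrix in place of $\boldsymbol{\Sigma}$ --- it identifies nothing. The statement is loosely written (and indeed internally inconsistent: the paper's own proof ends with $n^{-1}\boldsymbol{\Sigma}^{-1}+o_P(1)$, not $n^{-1}\boldsymbol{\Sigma}$, and the inverse is what Theorem \ref{THM:beta-normality} requires), but its intended and used content is a statement at relative precision: $n\,\mathrm{Var}(\widetilde{\boldsymbol{\beta}}_{\epsilon}\mid\cdots)\rightarrow \boldsymbol{\Sigma}^{-1}$ in probability, equivalently a remainder of order $o_P(n^{-1})$, since this is what lets Lemma \ref{LEM:betahateasymp} be converted into the normalization $(n\boldsymbol{\Sigma})^{1/2}$ of Theorem \ref{THM:beta-normality}. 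An additive $o_P(1)$ error is useless at the $n^{-1}$ scale. Your parenthetical ``sharper analysis'' does not close this either: it stops at $\mathrm{Var}=\sigma^{2}\mathbf{U}_{11}(1+o_P(1))$, where $\mathbf{U}_{11}$ is an \emph{empirical} matrix, whereas the entire substance of the paper's proof is the identification of this empirical quantity with the population matrix, namely $(n\mathbf{U}_{11})^{-1}=\left(\langle z_j, z_{j'}-s_{\lambda,z_{j'}}\rangle_n\right)_{j,j'}=E\{(\mathbf{Z}_i-\widetilde{\mathbf{Z}}_i)(\mathbf{Z}_i-\widetilde{\mathbf{Z}}_i)^{\T}\}+o_P(1)=\sigma^{2}\boldsymbol{\Sigma}+o_P(1)$. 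That step is not routine: it needs the spline approximant $\widetilde{h}_j$ of $h_j$ from Lemma \ref{LEM:appord} via (\ref{EQ:h_j_tilde}), the bound $\|s_{0,z_j}-s_{\lambda,z_j}\|_n=O_P(\lambda n^{-1}|\triangle|^{-5})$ of Lemma \ref{LEM:s_0-s_lambda}, the argument through $h_{j,n}^{\ast}$ showing $\|s_{0,z_j}-h_j\|_n=o_P(1)$ in (\ref{EQ:zj-psijstar}), the penalty-term bound controlling $\frac{\lambda}{n}\langle s_{\lambda,z_{j'}},\widetilde{h}_j\rangle_{\mathcal{E}_{\upsilon}}$, and Lemma \ref{LEM:Rnorder} to pass between empirical and theoretical inner products. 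None of this machinery appears in your proposal, so the lemma's actual role in the paper --- pinning down the limiting conditional covariance as $\boldsymbol{\Sigma}^{-1}$ --- is left unproved.
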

\textit{Proof.} According to (\ref{EQ:sum_a_2}),
\begin{eqnarray*}
	\mathrm{Var}\left( \widetilde{\bs{\beta }}_{\epsilon}\left\vert
	\left\{\left( \mathbf{X}_{i},\mathbf{Z}_{i}\right)\right\}\right. \right)\!=\!
	n^{-1} (n\mathbf{U}_{11})\!
	\left\{ n^{-1}\sum_{i=1}^{n}(\mathbf{Z}_{i}-\widehat{\mathbf{Z}}_{i})(\mathbf{Z}_{i}-\widehat{\mathbf{Z}}_{i})^{\T}\right\}
	\!(n\mathbf{U}_{11}) \sigma^2.
\end{eqnarray*}
By the definition of $\mathbf{U}_{11}^{-1}$ in (\ref{DEF:U11}), we have
\[
(n\mathbf{U}_{11})^{-1}=\frac{1}{n}\sum_{i=1}^{n}\mathbf{Z}_{i} (\mathbf{Z}_{i}-\widehat{\mathbf{Z}}_{i})^{\T}
=\left(\langle z_{j},z_{j^{\prime}}-s_{\lambda,z_{j^{\prime}}}\rangle_{n}\right)_{1\leq j,j^{\prime}\leq p}.
\]
As in the proof of Lemma \ref{LEM:betamu-beta}, let $\widetilde{h}_{j}\in \mathbb{S}$ and $h_{j}$ satisfy (\ref{EQ:h_j_tilde}). Then,
\begin{equation}
\langle z_{j},z_{j^{\prime}}-s_{\lambda,z_{j^{\prime}}}\rangle_{n}
=\langle z_{j}-\widetilde{h} _{j}, z_{j^{\prime}}-s_{\lambda,z_{j^{\prime}}}\rangle_{n}
+\frac{\lambda}{n}\langle s_{\lambda,z_{j^{\prime}}}, \widetilde{h}_{j} \rangle_{\mathcal{E}_{\upsilon}}.
\label{EQ:zj}
\end{equation}
According to (\ref{EQ:s_0g_Ev}), (\ref{EQ:s_lambda}) and (\ref{EQ:s_0z_Ev}), we have
\begin{eqnarray*}
	\left\vert \langle s_{\lambda,z_{j^{\prime}}}, \widetilde{h}_{j^{\prime}}\rangle_{\mathcal{E}_{\upsilon}}\right\vert
	&\!\!\!\leq \!\!\!& \|s_{\lambda,z_{j^{\prime}}}\|_{\mathcal{E}_{\upsilon}} \|\widetilde{h}_{j^{\prime}} \|_{\mathcal{E}_{\upsilon}}
	\leq  \|s_{0,z_{j^{\prime}}}\|_{\mathcal{E}_{\upsilon}} \|\widetilde{h}_{j^{\prime}} \|_{\mathcal{E}_{\upsilon}}\notag \\
	&\!\!\!\leq \!\!\!&\frac{C}{|\triangle|^{2}} \|s_{0,z_{j^{\prime}}}\|_{\infty} \|\widetilde{h}_{j^{\prime}} \|_{\mathcal{E}_{\upsilon}}\notag \\
	&\!\!\!\leq\!\!\!&\frac{CC^{*}}{|\triangle|^{3}} \left( \left| h_{j}^{\prime}\right| _{2,\infty}+\frac{F_{2}}{F_{1}}%
	\left| \triangle \right| ^{\ell +1-\upsilon}\left| h_{j}^{\prime}\right| _{\ell
		+1,\infty}\right). \label{EQ:zj2}
\end{eqnarray*}
Note that
\[
\langle z_{j}-\widetilde{h} _{j}, z_{j^{\prime}}-s_{\lambda,z_{j^{\prime}}}\rangle_{n}
=\langle z_{j}-h_{j}, z_{j^{\prime}}-h_{j^{\prime}}\rangle_{n}
+ \langle h_{j}-\widetilde{h}_{j}, h_{j^{\prime}}-\widetilde{h}_{j^{\prime}}\rangle_{n}
+\langle z_{j}-h_{j}, h_{j^{\prime}}-\widetilde{h}_{j^{\prime}}\rangle_{n}
\]
\begin{equation}
+\langle h_{j}-\widetilde{h}_{j}, z_{j^{\prime}}-h_{j^{\prime}}\rangle_{n}+\langle z_{j}-h_{j}, \widetilde{h}_{j^{\prime}} -s_{\lambda,z_{j^{\prime}}}\rangle_{n}
+\langle h_{j}-\widetilde{h}_{j}, \widetilde{h}_{j^{\prime}}-s_{\lambda,z_{j^{\prime}}} \rangle_{n}.
\label{EQ:zj1}
\end{equation}
By (\ref{EQ:h_j_tilde}), the second term on the right side of (\ref{EQ:zj1}) satisfies that
\begin{equation*}
\left|\langle h_{j}-\widetilde{h}_{j}, h_{j^{\prime}}-\widetilde{h}_{j^{\prime}}\rangle _{\infty}\right|
\leq \|h_{j}-\widetilde{h}_{j}\|_{\infty}
\|h_{j^{\prime}}-\widetilde{h}_{j^{\prime}}\|_{\infty}
=o_{P}(1).
\label{EQ:zj12}
\end{equation*}
By Lemma \ref{LEM:Rnorder} and (\ref{EQ:h_j_tilde}), the third term on the right side of (\ref{EQ:zj1}) satisfies that
\begin{equation*}
\left\vert \langle z_{j}-h_{j}, h_{j^{\prime}}-\widetilde{h}_{j^{\prime}}\rangle  _{n}\right\vert
\leq \left\{\| z_{j}-h_{j}\|_{L^{2}} (1+o_{P}(1))\right\}\|h_{j^{\prime}}-\widetilde{h}_{j^{\prime}}\|_{\infty}
=o_{P}(1).
\label{EQ:zj13}
\end{equation*}
Similarly, $\left|\langle h_{j}-\widetilde{h}_{j}, z_{j^{\prime}}-h_{j^{\prime}}\rangle_{n}\right|=o_{P}(1)$.
From the triangle inequality, we have
\[
\|\widetilde{h}_{j} -s_{\lambda,z_{j}} \| _{n} \leq
\|\widetilde{h}_{j} -h_{j}\|_{n} + \|h_{j}-s_{0,z_{j}}\|_{n} + \|s_{0,z_{j}}-s_{\lambda,z_{j}}\|_{n}.
\]
According to (\ref{EQ:h_j_tilde}) and Lemma \ref{LEM:s_0-s_lambda}, we have
\[
\| \widetilde{h}_{j} -s_{\lambda,z_{j}} \| _{n} \leq
\|h_{j}-s_{0,z_{j}}\|_{n} + o_{P}(1).
\]
Define $h_{j,n}^{\ast}=\mathrm{argmin}_{h\in \mathbb{S}}\|z_j-h\|_{L^{2}}$, then, from the triangle inequality,
\[
\|h_{j}-s_{0,z_{j}}\|_{n}\leq\|h_{j}-h_{j,n}^{\ast }\|_{n}
+\|h_{j,n}^{\ast}- s_{0,z_{j}}\|_{n}
\]
Note that $\|h_{j}-h_{j,n}^{\ast}\|_{L^{2}}=o_{P}(1)$. Lemma \ref{LEM:Rnorder} implies that $\|h_{j}-h_{j,n}^{\ast }\|_{n}=o_{P}(1)$.
Next note that
$\|s_{0,z_{j}}-h_{j,n}^{\ast}\|_{L^{2}}^{2}=\|z_{j}-s_{0,z_{j}}\|_{L^{2}}^{2}-\|z_{j}- h_{j,n}^{\ast}\|_{L^{2}}^{2}$
and
$\|z_{j}-s_{0,z_{j}}\|_{n}\leq \|z_{j}- h_{j,n}^{\ast}\|_{n}$.
Using Lemma \ref{LEM:Rnorder} again, we have
\[
\|s_{0,z_{j}}-h_{j,n}^{\ast}\|_{L^{2}}^{2}=o_{P}(\|z_{j}-h_{j,n}^{\ast}\|_{L^{2}}^{2})+o_{P}(\|z_{j}- s_{0,z_{j}}\|_{L^{2}}^{2}).
\]
Since there exists a constant $C$ such that $\|z_{j}- h_{j,n}^{\ast}\|_{L^{2}} \leq C$, so we have
\[
\|z_{j}-s_{0,z_{j}}\|_{L^{2}}\leq \|z_{j}- h_{j,n}^{\ast}\|_{L^{2}}+ \|h_{j,n}^{\ast}-s_{0,z_{j}}\|_{L^{2}}
\leq C+ \|h_{j,n}^{\ast}-s_{0,z_{j}}\|_{L^{2}}.
\]
Therefore, $\|h_{j,n}^{\ast}-s_{0,z_{j}}\|_{L^{2}}=o_{P}(1)$. Lemma \ref{LEM:Rnorder} implies that
$\|h_{j,n}^{\ast }-s_{0,z_{j}}\|_{n}=o_{P}(1)$. As a consequence,
\begin{equation}
\|s_{0,z_{j}} -h_{j}\|_{n}=o_{P}(1).
\label{EQ:zj-psijstar}
\end{equation}
For the fifth item, by Lemma \ref{LEM:Rnorder} and (\ref{EQ:zj-psijstar}), we have
\begin{eqnarray*}
	\left\vert \langle z_{j}-h_{j}, \widetilde{h}_{j^{\prime}} -s_{\lambda,z_{j^{\prime}}} \rangle_{n} \right\vert
	&\!\!\!\leq\!\!\!& \left\{\| z_{j}-h_{j}\|_{L^{2}} (1+o_{P}(1))\right\}
	\left\{ \|h_{j}-s_{0,z_{j}}\|_{n} + o_{P}(1)\right\}\\
	&\!\!\!=\!\!\!&o_{P}(1).
	\label{EQ:zj5}
\end{eqnarray*}
Similarly, for the sixth item, we have
\begin{equation}
\left\vert \langle h_{j}-\widetilde{h}_{j}, \widetilde{h}_{j^{\prime}} -s_{\lambda,z_{j^{\prime}}} \rangle_{n}  \right\vert
\leq  \|h_{j}-\widetilde{h}_{j}\|_{n}
\left\{ \|h_{j}-s_{0,z_{j}}\|_{n} + o_{P}(1)\right\}=o_{P}(1).
\label{EQ:zj6}
\end{equation}
Combining the above results from (\ref{EQ:zj}) to (\ref{EQ:zj6}) gives that
\begin{equation*}
\langle z_{j},z_{j^{\prime}}-s_{\lambda,z_{j^{\prime}}}\rangle_{n}
= \langle z_{j}-h_{j},z_{j^{\prime}}-h_{j^{\prime}}^{\ast}\rangle_{n}  +o_{P}(1).
\end{equation*}%
Therefore,
\[
(n\mathbf{U}_{11})^{-1}=\frac{1}{n}\sum_{i=1}^{n}(\mathbf{Z}_{i}-\widetilde{\mathbf{Z}}_{i}) (\mathbf{Z}_{i}-\widetilde{\mathbf{Z}}_{i})^{\T}
+o_{P}(1)=E[(\mathbf{Z}_{i}-\widetilde{\mathbf{Z}}_{i}) (\mathbf{Z}_{i}-\widetilde{\mathbf{Z}}_{i})^{\T}]+o_{P}(1).
\]
Hence,
\[\hspace{1cm}
\mathrm{Var}\left( \widetilde{\bs{\beta }}%
_{\epsilon}\left\vert \left\{\left( \mathbf{X}_{i},\mathbf{Z}_{i}\right),i=1,\ldots,n\right\}\right. \right)
=n^{-1}\bs{\Sigma }^{-1}+o_{P}\left(1\right) . \hspace{1cm}\square
\]

\vskip .05in \noindent \textbf{D.2. Proof of Theorem \ref{THM:g-convergence}}

Let $\mathbf{H}_{\mathbf{Z}}=\mathbf{I}-\mathbf{Z} (\mathbf{Z}^{\T}\mathbf{Z})^{-1}\mathbf{Z}^{\T}$, then
\[
\widehat{\bs{\theta}}=\mathbf{U}_{22}\mathbf{Q}_{2}^{\T}\mathbf{B}^{\T} \mathbf{H}_{\mathbf{Z}}\mathbf{Y}
=\mathbf{U}_{22}\mathbf{Q}_{2}^{\T}\mathbf{B}^{\T} \mathbf{H}_{\mathbf{Z}}\mathbf{g}_{0}+\mathbf{U}_{22}\mathbf{Q}_{2}^{\T}\mathbf{B}^{\T} \mathbf{H}_{\mathbf{Z}}\bs{\epsilon}
=\widetilde{\bs{\theta}}_{\mu}+\widetilde{\bs{\theta}}_{\epsilon}.
\]
According to Lemma \ref{LEM:bias},
$\|s_{0,g_{0}}-g_{0}\|_{\infty}\leq C\frac{F_{2}}{F_{1}}|\triangle|^{\ell+1}|g_{0}|_{\ell+1,\infty}$.
Denote by $\bs{\gamma}_0=\mathbf{Q}_2\bs{\theta}_{0}$ the spline coefficients of $s_{0,g_{0}}$.
Then we have the following decomposition: $\widehat{\bs{\theta}}-\bs{\theta}_{0}
=\widetilde{\bs{\theta}}_{\mu}-\bs{\theta}_{0}
+\widetilde{\bs{\theta}}_{\epsilon}$. Note that
\begin{eqnarray*}
	\widetilde{\bs{\theta}}_{\mu}-\bs{\theta}_{0}&\!\!\!=\!\!\!&
	\mathbf{U}_{22}\mathbf{Q}_{2}^{\T}\mathbf{B}^{\T} \mathbf{H}_{\mathbf{Z}}\mathbf{g}_{0}-\bs{\theta}_{0}\\
	&\!\!\!=\!\!\!&\mathbf{U}_{22}\mathbf{Q}_{2}^{\T}\mathbf{B}^{\T} \mathbf{H}_{\mathbf{Z}} (\mathbf{g}_{0}-\mathbf{B}\mathbf{Q}_{2}\bs{\theta}_{0})
	-\lambda\mathbf{U}_{22}\mathbf{Q}_{2}^{\T}\mathbf{P}\mathbf{Q}_{2}\bs{\theta}_{0}.
\end{eqnarray*}
According to (\ref{DEF:U22}), for any $\mathbf{a}$
\[
\mathbf{a}^{\T}\mathbf{U}_{22}^{-1}\mathbf{a}
=\mathbf{a}^{\T}\mathbf{Q}_{2}^{\T}\left(\mathbf{B}^{\T}  \mathbf{H}_{\mathbf{Z}}\mathbf{B}+\lambda\mathbf{P} \right)\mathbf{Q}_{2}\mathbf{a}.
\]
Since $\mathbf{H}_{\mathbf{Z}}$ is idempotent, so its eigenvalues $\pi_{j}$ is either 0 or 1. Without loss of generality we can arrange the eigenvalues in decreasing order
so that $\pi_{j}=1$, $j=1,\ldots, m$ and $\pi_{j}=1$, $j=m + 1,\ldots, n$. Therefore, we have
\[
\mathbf{a}^{\T}(n\mathbf{U}_{22})^{-1}\mathbf{a}=\frac{1}{n}\sum_{j=1}^{m} \pi_{j}\mathbf{a}^{\T}\mathbf{Q}_{2}^{\T} \mathbf{B}^{\T}\mathbf{e}_{j}\mathbf{e}_{j}^{\T}\mathbf{B}\mathbf{Q}_{2}\mathbf{a}
+\frac{\lambda}{n}
\mathbf{a}^{\T}\mathbf{Q}_{2}^{\T}\mathbf{P}\mathbf{Q}_{2}\mathbf{a},
\]
where $\mathbf{e}_{j}$ be the indicator vector which is a zero
vector except for an entry of one at position $j$. Using Markov's inequality, we have
\[
\frac{\lambda }{n}\mathcal{E}_{\upsilon }\left(\sum_{\xi \in
	\mathcal{K}}a_{\xi }B_{\xi }\right)\leq
\frac{\lambda }{n}\frac{C_{1}}{|\triangle |^{2}}C_{2}\Vert
\mathbf{a}\Vert ^{2}.
\]
Thus, by Conditions (C4) and (C5), $n\mathbf{a}^{\T}\mathbf{U}_{22}\mathbf{a}\leq C|\triangle |^{-2}$.
Next
\begin{eqnarray*}
	&&\|\mathbf{U}_{22}\mathbf{Q}_{2}^{\T}\mathbf{B}^{\T} \mathbf{H}_{\mathbf{Z}} (\mathbf{g}_{0}-\mathbf{B}\mathbf{Q}_{2}\bs{\theta}_{0})\|
	\leq C^{1/2}|\triangle|^{-1} n^{-1}\|\mathbf{B}^{\T} \mathbf{H}_{\mathbf{Z}} (\mathbf{g}_{0}-\mathbf{B}\mathbf{Q}_{2}\bs{\theta}_{0})\|\\
	&&\hspace{2cm}\leq C^{1/2}|\triangle|^{-1} n^{-1}\left[\sum_{\xi\in\mathcal{K}}\{\mathbf{B}_{\xi}^{\T} \mathbf{H}_{\mathbf{Z}} (\mathbf{g}_{0}-\mathbf{B}\mathbf{Q}_{2}\bs{\theta}_{0})\}^2\right]^{1/2}\\
	&&\hspace{2cm}=O_{P}\left(\frac{F_{2}}{F_{1}}|\triangle|^{\ell}|g_{0}|_{\ell+1,\infty}\right),
\end{eqnarray*}
and
\[
\lambda\|\mathbf{U}_{22}\mathbf{Q}_{2}^{\T}\mathbf{P}\mathbf{Q}_{2}\bs{\theta}_{0}\|
\leq \frac{C\lambda}{n|\triangle|^{4}}\|s_{0,g_{0}}\|_{\mathcal{E}_{\upsilon}}
\leq \frac{C\lambda}{n|\triangle|^{4}}\left(|g_{0}|_{2,\infty}+\frac{F_2}{F_1}|\triangle|^{\ell-1}|g_{0}|_{\ell+1,\infty}\right).
\]
Thus,
\[
\|\widetilde{\bs{\theta}}-\bs{\theta}_{0}\|
=O_{P}\left\{ \frac{%
	\lambda }{n\left| \triangle \right| ^{4}}|g_{0}| _{2,\infty}+\left( 1+\frac{\lambda }{n\left|\triangle \right| ^{5}}\right)
\frac{F_{2}}{F_{1}}\left| \triangle \right| ^{\ell}\left|
g_{0}\right| _{\ell +1,\infty}\right\}.
\]
For any $\bs{\alpha}$ with $\left\Vert \bs{\alpha} \right\Vert =1$, we
write $\bs{\alpha}^{\T}\widetilde{\bs{\theta}}_{\epsilon}=\sum_{i=1}^{n}\alpha_{i}\epsilon _{i}$ and
\[
\alpha_{i}^{2}=\bs{\alpha}^{\T}\mathbf{U}_{22}\mathbf{Q}_{2}^{\T}\mathbf{B}^{\T} \mathbf{H}_{\mathbf{Z}}\mathbf{B}\mathbf{Q}_{2}\mathbf{U}_{22}\bs{\alpha}.
\]
Following the same arguments as those in Lemma \ref{LEM:betahateasymp}, we have $\max_{1\leq i\leq n} \alpha_{i}^{2} =O_{P}(n^{-2}|\triangle|^{-4})$.
Thus,
\[
\|\widetilde{\bs{\theta}}_{\epsilon}\| \leq  |\triangle|^{-1}
|\bs{\alpha}^{\T}\widetilde{\bs{\theta}}
_{\epsilon}| = |\triangle|^{-1}|\sum_{i=1}^{n}\alpha_{i}\epsilon _{i}|=O_{P}(|\triangle|^{-2}n^{-1/2}).
\]
Therefore,
\[
\|\widehat{\bs{\theta}}
-\bs{\theta}_{0}\|=O_{P}\left\{ \frac{\lambda }{n\left| \triangle \right| ^{4}}|g_{0}| _{2,\infty}+\left( 1+\frac{\lambda }{n\left|\triangle \right| ^{5}}\right)
\frac{F_{2}}{F_{1}}\left| \triangle \right| ^{\ell}\left|
g_{0}\right| _{\ell +1,\infty}+ \frac{1}{\sqrt{n}|\triangle|^{2}}\right\}.
\]
Observing that $\widehat{g}(\mathbf{x})=\mathbf{B}(\mathbf{x})\widehat{\bs{\gamma}}
=\mathbf{B}(\mathbf{x})\mathbf{Q}_{2}\widehat{\bs{\theta}}$, we have
\[
\| \widehat{g}-g_{0}\|_{L^{2}} \leq \|\widehat{g}-s_{0,g_{0}}\|_{L^{2}}+C\frac{F_{2}}{F_{1}}|\triangle|^{\ell+1}|g_{0}|_{\ell+1,\infty}.
\]
According to Lemma \ref{LEM:normequity}, we have.
\begin{eqnarray*}
	\| \widehat{g}-g_{0}\|_{L^{2}} &\!\!\!\leq\!\!\!& C\left(|\triangle|\|\widehat{\bs{\gamma}}
	-\bs{\gamma}_{0}\|+\frac{F_{2}}{F_{1}}|\triangle|^{\ell+1}|g_{0}|_{\ell+1,\infty}\right)=O_{P}\left\{ \frac{\lambda }{n\left| \triangle \right| ^{3}}|g_{0}| _{2,\infty}\right.\\
	&&+\left.\left(1+\frac{\lambda }{n\left|\triangle \right| ^{5}}\right)
	\frac{F_{2}}{F_{1}}\left| \triangle \right| ^{\ell+1}\left|
	g_{0}\right| _{\ell +1,\infty}+ \frac{1}{\sqrt{n}|\triangle|}\right\}.
\end{eqnarray*}
The proof is completed.

\bibliographystyle{asa}
\bibliography{reference}

\end{document}